\newtheorem{thm}{Theorem}[section]
\newtheorem{lem}[thm]{Lemma}
\newtheorem{exm}[thm]{Example}
\newtheorem{thmy}{Theorem}
\newtheorem{prop}[thm]{Proposition}
\theoremstyle{definition}
\newtheorem{defn}[thm]{Definition}
\theoremstyle{remark}
\newtheorem{rem}[thm]{\bf Remark}
\numberwithin{equation}{section}
\newcommand{\smallotimes}{\mathbin{\mathpalette\make@small\otimes}}
\newcommand{\make@small}[2]{%
  \vcenter{\hbox{%
    $\m@th\ifx#1\displaystyle\scriptstyle\else\ifx#1\textstyle\scriptstyle
     \else\scriptscriptstyle\fi\fi#2$%
  }}%
}
\begin{document}

\title[Higher-dimensional module factorizations and complete intersections]{Higher-dimensional module factorizations and complete intersections}

\author{Xiao-Wu Chen}

\makeatletter
\@namedef{subjclassname@2020}{\textup{2020} Mathematics Subject Classification}
\makeatother

\subjclass[2020]{16E65, 18G25, 18G80, 18G20, 18G65}
%\thanks{$^*$Corresponding author}
\keywords{matrix factorization, module factorization, Gorenstein projective module, Frobenius category, higher matrix factorization, complete intersection}

\date{\today}

\begin{abstract}
We introduce higher-dimensional module factorizations associated to a regular sequence. They include higher-dimensional matrix factorizations, which are commutative cubes consisting of free modules with edges being classical matrix factorizations.  We characterize the stable category of maximal Cohen-Macaulay modules over a complete intersection via higher-dimensional matrix factorizations over the corresponding regular local ring. The result generalizes to  noncommutative rings, including quantum complete intersections.
\end{abstract}

\maketitle
%\sloppy

%\tableofcontents

\section{Introduction}

 \subsection{Higher-dimensional matrix factorizations} Let $S$ be a regular local ring with a regular sequence $(\omega_1, \cdots, \omega_n)$. The corresponding quotient ring $R=S/{(\omega_1, \cdots, \omega_n)}$ is called a complete intersection. If $n=1$, the ring $R$ is called a hypersurface.

 Higher syzygies of $R$-modules play a central role in commutative algebra \cite{Sha, Eis}. They boil down to the rich structure on  the category  ${\rm MCM}(R)$ of maximal Cohen-Macaulay (MCM for short) $R$-modules.  To be more precise, ${\rm MCM}(R)$ is a Frobenius exact category in the sense of \cite{Hel60, Hap}, whose stable category $\underline{\rm MCM}(R)$ is canonically triangulated \cite{Hap}. The category  $\underline{\rm MCM}(R)$ detects the singularity of $R$, since it is triangle equivalent to the singularity category \cite{Buc, Orl} of $R$.

 In the hypersurface case, that is, $R=S/{(\omega)}$ for a nonzero element $\omega$, the category $\underline{\rm MCM}(R)$ is triangle equivalent to the stable category of $\mathbf{MF}(S; \omega)$, the Frobenius exact category of matrix factorizations \cite{Eis} of $\omega$. Recall that a \emph{matrix factorization} $X$  is visualized as follows
 \[\xymatrix{
  X^0\ar@<+.7ex>[rr]^-{d_X^0} && X^1 \ar@<+.7ex>[ll]^-{d_X^1},
}\]
with each component $X^i$ a free $S$-module of finite rank satisfying
$$d_X^1\circ d_X^0=\omega {\rm Id}_{X^0} \mbox{ and } d_X^0\circ d_X^1=\omega {\rm Id}_{X^1}.$$
An $R$-module $M$ is MCM if and only if there is a matrix factorization $X$ such that $M$ is isomorphic to ${\rm Coker}(d_X^0)$.  We mention that matrix factorizations appear in link homology \cite{KR},  Hodge theory \cite{BFK, KKP} and mathematical physics \cite{Car, KapL}.

For the general case, \emph{higher matrix factorizations} associated to a regular sequence are introduced in \cite{EP16}. Analogously, an $R$-module $M$ is MCM if and only if there is a higher matrix factorization $Z$ such that $M$ is isomorphic to a certain cokernel related to $Z$; compare \cite{EP}.

However, the category of higher matrix factorizations does not have an obvious exact structure. Consequently, its homotopy category has no obvious triangulated structure.

In view of the hypersurface case \cite{Eis}, the following problem is fundamental.
\vskip 5pt

\noindent  {\bf Problem}. \emph{For a complete intersection $R=S/{(\omega_1, \cdots, \omega_n)}$, how can one describe the stable category $\underline{\rm MCM}(R)$ in terms of free $S$-modules of finite rank?  }

\vskip 5pt
We mention that Problem is analogous to \cite[Question~1]{BW}. The latter leads to a description of $\underline{\rm MCM}(R)$ in terms of matrix factorizations \cite{Orl2, BW} over a certain non-affine scheme, which relies on \cite{Orl06}.

In Theorem~B below, we solve Problem completely. The main tools are \emph{higher-dimensional matrix factorizations} of  a regular sequence. The classical matrix factorizations will be viewed as $1$-dimensional objects.

To be more precise, an $n$-dimensional matrix factorization of  a regular sequence $(\omega_1, \cdots, \omega_n)$ is an $n$-dimensional commutative cube $X=(X^{\alpha})_{\alpha\in \{0, 1\}^n}$ consisting of free $S$-modules $X^\alpha$ of finite rank such that any edge of $X$ in direction $i$ belongs to $\mathbf{MF}(S; \omega_i)$.  For example,  a $2$-dimensional matrix factorization of $(\omega_1, \omega_2)$ is a  commutative square with rows in $\mathbf{MF}(S; \omega_1)$ and columns in $\mathbf{MF}(S; \omega_2)$; see (\ref{diag:fact}) in Subsection~\ref{subsec:6.1}.
\[
\xymatrix{
 X^{0, 1}\ar@<+.7ex>[rr]^-{d_1^{0, 1}}    \ar@<+.7ex>[dd]^-{d_2^{0,1}} && X^{1, 1} \ar@<+.7ex>[ll]^-{d_1^{1,1}}  \ar@<+.7ex>[dd]^-{d_2^{1,1}}\\ \\
 X^{0, 0}\ar@<+.7ex>[rr]^-{d_1^{0, 0}} \ar@<+.7ex>[uu]^-{d_2^{0, 0}}  && X^{1, 0}  \ar@<+.7ex>[uu]^-{d_2^{1, 0}} \ar@<+.7ex>[ll]^-{d_1^{1,0}}
}\]
We denote by $\mathbf{MF}(S; \omega_1, \cdots, \omega_n)$ the category of $n$-dimensional matrix factorizations. It is naturally a Frobenius exact category. Thus, the stable category $\underline{\mathbf{MF}}(S; \omega_1, \cdots, \omega_n)$ is triangulated \cite{Hap}.  The corresponding \emph{p-null-homotopical morphisms} are described in Subsection~\ref{subsec:9.1}.

Set ${\bf 1}=(1, 1, \cdots, 1)\in \{0, 1\}^n$. For each $1\leq i\leq n$,  we denote by $\epsilon_i$ the corresponding unit vector in $\{0, 1\}^n$. For an $n$-dimensional matrix factorization $X$, we consider the following map between free $S$-modules
$$\sum_{1\leq i \leq n}d_i^{{\bf 1}-\epsilon_i} \colon \bigoplus_{1\leq i\leq n} X^{{\bf 1}-\epsilon_i} \longrightarrow X^{\bf 1},$$
whose cokernel is denoted by ${\rm TCok}(X)$, called the \emph{total-cokernel} of $X$. Since each $\omega_i$ vanishes on ${\rm TCok}(X)$, it is naturally an $R$-module.

The following observation justifies the new notion of higher-dimensional matrix factorizations; see Remark~\ref{rem:PropA}.
\vskip 5pt

\noindent {\bf Proposition~A.} \; \emph{Let $R=S/{(\omega_1, \cdots, \omega_n)}$ be a complete intersection. Then an $R$-module  is {\rm MCM} if and only if it is isomorphic to the total-cokernel of some  $n$-dimensional matrix factorization.}

\vskip 5pt
Consequently, the following functor is
$${\rm TCok}\colon \mathbf{MF}(S; \omega_1, \cdots, \omega_n)\longrightarrow  {\rm MCM}(R)$$
well defined. To describe the category $\underline{\rm MCM}(R)$, we have to introduce a very subtle subcategory of  $\mathbf{MF}(S; \omega_1, \cdots, \omega_n)$.

We define the full subcategory $\mathbf{MF}^{\bf 0}(S; \omega_1, \cdots, \omega_n)$ inductively as follows: an $n$-dimensional matrix factorization $X$ belongs to $\mathbf{MF}^{\bf 0}(S; \omega_1, \cdots, \omega_n)$ if and only if each facet of $X$ containing $X^{\bf 1}$ is a projective object in the relevant category of $(n-1)$-dimensional matrix factorizations. This category is still a Frobenius exact category, whose  stable category is denoted by $\underline{\mathbf{MF}}^{\bf 0}(S; \omega_1, \cdots, \omega_n)$.

\vskip 5pt

\noindent {\bf Theorem~B (= Theorem~\ref{thm:B}).}  \; \emph{Let $R=S/{(\omega_1, \cdots, \omega_n)}$ be a complete intersection. Then the assignment $X\mapsto {\rm TCok}(X)$ induces a triangle equivalence
$$\underline{\mathbf{MF}}^{\bf 0}(S; \omega_1, \cdots, \omega_n)\simeq \underline{\rm MCM}(R).$$}

\vskip 5pt

In what follows, we compare our higher-dimensional matrix factorizations with higher matrix factorizations in \cite{EP16}. For details, we refer to Subsection~\ref{subsec:9.1}.

(i) Any $n$-dimensional matrix factorization $X$ yields a higher matrix factorization,  whose underlying modules are $(\bigoplus_{1\leq i\leq n} X^{{\bf 1}-\epsilon_i} , X^{\bf 1})$. Consequently,  Proposition~A gives another proof of the result that any MCM $R$-module is  a higher matrix factorization module; see \cite[Definition~1.2.2]{EP16} and \cite[Theorem~10.5]{EP}.

(ii) Higher matrix factorizations in \cite{EP16} are essential tools to obtain minimal free resolutions of $R$-modules; compare \cite{EP}. Unfortunately, higher-dimesnional matrix factorizations seemingly do not play such a role.

(iii) The category of higher-dimensional matrix factorizations is naturally a Frobenius exact category, which makes Theorem~B possible. However, it seems that the category of higher matrix factorizations in \cite{EP16} does not have such a property.

(iv) Higher-dimensional matrix factorizations extend well to higher-dimensional module factorizations, which work well in the noncommutative setting. However, it is not clear how higher matrix factorizations in \cite{EP16} extend to or  work for noncommutative rings.

\vskip 5pt

To explain (iv) in more details, we mention that matrix factorizations over noncommutative rings are introduced in \cite{CCKM}; see also \cite{MU}. As their natural extensions, ($1$-dimensional) \emph{module factorizations} are studied in \cite{Chen24}.  In the commutative case, module factorizations  are  called linear factorizations in \cite{DM} and generalized matrix factorizations in \cite{EP}. The main concern here is \emph{higher-dimensional module factorizations} with Gorenstein projective components; compare \cite{BFNS}. Here, we recall that finitely generated Gorenstein projective modules are introduced in \cite{ABr}, which are natural extensions of MCM modules over  commutative Gorenstein local rings. For general Gorenstein projective modules, we refer to \cite{EJ}. We mention a nice survey \cite{CFH}.

\vskip 5pt

\subsection{The main result}  Let us describe the main result in full generality. Let $A$ be any ring. By a \emph{regular sequence} in $A$, we mean a sequence $(\omega_1, \cdots, \omega_n)$ of elements such that $\omega_1$ is regular and normal in $A$ and that $\bar{\omega_i}$ is regular and normal in the quotient ring $A/{(\omega_1, \cdots, \omega_{i-1})}$ for each $2\leq i\leq n$; see \cite{KKZ}.  Set $B=A/{(\omega_1, \cdots, \omega_n)}$.

For a technical reason, we have to strengthen the notion. For this end, we introduce a \emph{type} $(\sigma_1, \cdots, \sigma_n; \xi_{ij})$, which consists of   automorphisms  $\sigma_i$ on $A$ and certain invertible elements $\xi_{ij}$ in $A$; see Definition~\ref{defn:type}. Furthermore, we introduce the notion of a \emph{regular sequence of type} $(\sigma_1, \cdots, \sigma_n; \xi_{ij})$; see Definition~\ref{defn:type-n}. Associated to these data, we introduce the abelian category $\mathbf{F}(A; \omega_1, \cdots, \omega_n)$ of \emph{$n$-dimensional module factorizations}, which are commutative cubes with edges being module factorizations \cite{Chen24}.

For any ring $R$, we denote by $R\mbox{-GProj}$ the category of Gorenstein projective $R$-modules and by $R\mbox{-Gproj}$ the category of finitely generated ones. Their stable categories are denoted by $R\mbox{-\underline{GProj}}$  and $R\mbox{-\underline{Gproj}}$, respectively.

We consider the full subcategory $\mathbf{GF}(A; \omega_1, \cdots, \omega_n)$ consisting of $n$-dimensional module factorizations $X=(X^{\alpha})_{\alpha\in \{0, 1\}^n}$ with Gorenstein projective components. We still have the total-cokernel functor
$${\rm TCok}\colon \mathbf{GF}(A; \omega_1, \cdots, \omega_n)\longrightarrow B\mbox{-GProj}.$$
Similar to the consideration above, we introduce the full subcategory $\mathbf{GF}^{\bf 0}(A; \omega_1, \cdots, \omega_n)$ of $\mathbf{GF}(A; \omega_1, \cdots, \omega_n)$  formed by those factorizations $X$, each of  whose facets containing $X^{\bf 1}$ is a projective object in the relevant category of  $(n-1)$-dimensional module factorizations. In particular, all components of $X$, possibly except $X^{\bf 0}$, are projective $A$-modules.

An object $X$ in  $\mathbf{GF}(A; \omega_1, \cdots, \omega_n)$  is called an \emph{$n$-dimensional matrix factorization}, if each component $X^\alpha$ is a finitely generated projective $A$-module. These objects form the subcategory $\mathbf{MF}(A; \omega_1, \cdots, \omega_n)$. Similarly, we have the category   $\mathbf{MF}^{\bf 0}(A; \omega_1, \cdots, \omega_n)$.

The main result is a vast extension and generalization of Theorem~B.

\vskip 5pt

\noindent {\bf Theorem~C (= Theorem~\ref{thm:general}).}\; \emph{Let  $(\omega_1, \cdots, \omega_n)$ be a regular sequence of type $(\sigma_1, \cdots, \sigma_n; \xi_{ij})$ in $A$. Set $B= A/{(\omega_1, \cdots, \omega_n)}$.  Then the total-cokernel functor induces a triangle equivalence
$${\rm TCok}\colon \underline{\mathbf{GF}}^{\bf 0}(A; \omega_1, \cdots, \omega_n)\stackrel{\sim}\longrightarrow B\mbox{-\underline{\rm GProj}}.$$
When $A$ is left noetherian, we have a restricted triangle equivalence
$${\rm TCok}\colon \underline{\mathbf{MF}}^{\bf 0}(A; \omega_1, \cdots, \omega_n)\stackrel{\sim}\longrightarrow B\mbox{-\underline{\rm Gproj}}^{<+\infty}.$$}

\vskip 5pt

Here, $ B\mbox{-Gproj}^{<+\infty}$ denotes the full  subcategory of $B\mbox{-Gproj}$ consisting modules whose underlying $A$-modules have  finite projective dimension.  When $A$ is left noetherian, we will see that the restricted total-cokernel functor
$${\rm TCok}\colon \mathbf{MF}^{\bf 0}(A; \omega_1, \cdots, \omega_n) \longrightarrow B\mbox{-Gproj}^{<+\infty}$$ is dense; see Remarks~\ref{rem:dense-n} and \ref{rem:PropA}. In particular, we deduce Proposition~A.

The proof of Theorem~C relies heavily on \cite{Chen24}. We emphasize that even if the ring $A$ is commutative, we still have to use module factorizations over noncommutative rings, namely, the \emph{twisted matrix rings}; see Section~\ref{sec:modf}.

Theorem~C applies well to the case when $A$ is the quantum polynomial algebra; see Theorem~\ref{thm:qci}. The corresponding quotient algebra $B$ might be chosen to be the quantum complete intersection \cite{AGP}; see also \cite{BE, BO}.

\vskip 5pt

The paper is structured as follows. Section~\ref{sec:2} studies the category of factorizations in an arbitrary category. We recall basic facts on exact categories and their stable categories in Section~\ref{sec:3}. We study the canonical exact structure on the category of factorizations in Section~\ref{sec:4}. In Section~\ref{sec:modf}, we study module factorizations and GP-compatible idempotents.

In Section~\ref{sec:2-dim}, we introduce two-dimensional factorizations with respect to two commuting natural transformations.  In Section~\ref{sec:7}, we prove   Theorem~C in the first new case $n=2$; see Theorem~\ref{thm:2-dim}. Using induction and the twisted matrix ring,  we prove Theorem~C in the full generality in Section~\ref{sec:8}. We illustrate the main result for complete intersections in the final section.

By default, rings mean unital rings, and modules mean left unital modules. We denote by $\Sigma$ the suspension functor of any triangulated category. For triangulated categories, we refer to \cite{BBD, Hap}.

\section{The categories of factorizations}  \label{sec:2}

In this section, we study the category of factorizations and introduce  several related adjoint pairs.

Let $\mathcal{C}$ be any category with an autoequivalence $T\colon \mathcal{C}\rightarrow \mathcal{C}$. Fix a natural transformation $\omega\colon {\rm Id}_\mathcal{C}\rightarrow T$, which satisfies $\omega T=T\omega$, or equivalently, $\omega_{T(C)}=T(\omega_C)$ for any object $C$.

By an \emph{$\omega$-factorization} \cite{BDFIK}, we mean a quadruple
$$X=(X^0, X^1; d_X^0, d_X^1)$$
consisting of two objects $X^0,  X^1$, and two morphisms $d_X^0\colon X^0\rightarrow X^1$, $d_X^1\colon X^1\rightarrow T(X^0)$, which are subject to the following conditions:
$$d_X^1\circ d_X^0=\omega_{X^0}  \mbox{ and  } T(d_X^0)\circ d_X^1=\omega_{X^1}.$$
An $\omega$-factorization might be visualized as follows:
\[\xymatrix{
X^0 \ar[r]^-{d_X^0}  & X^1 \ar[r]^-{d_X^1} & T(X^0)  \mbox{ or }  X^0\ar@<+.7ex>[rr]^-{d_X^0} && X^1. \ar@<+.7ex>@{~>}[ll]^-{d_X^1}
}\]
Here, the curved arrow from $X^1$ to $X^0$ means the morphism $d_X^1\colon X^1\rightarrow T(X^0)$.

Let $Y=(Y^0, Y^1; d_Y^0, d_Y^1)$ be another $\omega$-factorization. A morphism $f=(f^0, f^1)\colon X\rightarrow Y$ between these two $\omega$-factorizations consists of two morphisms $f^0\colon X^0\rightarrow Y^0$ and $f^1\colon X^1\rightarrow Y^1$ in $\mathcal{C}$ satisfying
$$d_Y^0\circ f^0=f^1\circ d_X^0 \mbox{ and } d_Y^1\circ f^1=T(f^0)\circ d_X^1.$$
The composition of morphisms is defined componentwise. This gives rise to the category $\mathbf{F}(\mathcal{C}; \omega)$ of $\omega$-factorizations; compare \cite{BDFIK, BJ} and \cite[Remark~2.7]{Posi}.

 Take a quasi-inverse $T^{-1}$ of $T$, which fits into an adjoint pair $(T^{-1}, T)$.  We have the unit $\eta\colon {\rm Id}_\mathcal{C}\rightarrow TT^{-1}$ and the counit $\varepsilon\colon T^{-1}T\rightarrow {\rm Id}_\mathcal{C}$.  The adjunction quadruple
 \begin{align}\label{quad:1}
     (T^{-1}, T;  \eta, \varepsilon)
 \end{align}
  will be fixed.

To some extent, the following remark enables us to view $T$ as an automorphism of the category $\mathcal{C}$; compare \cite[2.1]{KV}.

\begin{rem}\label{rem:nat-trans}
Associated to $\omega \colon {\rm Id}_\mathcal{C}\rightarrow T$, we  define a natural transformation
 $$\omega^{(-1)}=\varepsilon\circ T^{-1}\omega\colon T^{-1}\longrightarrow {\rm Id}_\mathcal{C}.$$
We observe
\begin{align}\label{equ:omega1}
    T\omega^{(-1)}\circ \eta= T\varepsilon \circ TT^{-1}\omega \circ \eta=(\eta T)^{-1}\circ TT^{-1}\omega \circ \eta=\omega.
\end{align}
Here, the middle equality uses $T\varepsilon=(\eta T)^{-1}$,  and the rightmost one uses the naturalness of $\eta$. Moreover, we have
\begin{align}\label{equ:omega11}
\omega\circ \varepsilon= \varepsilon T\circ T^{-1}T\omega= \varepsilon T\circ T^{-1}\omega T=\omega^{(-1)}T,
\end{align}
where the leftmost equality uses the naturalness of $\varepsilon$.

We claim that the following identity holds.
\begin{align}\label{equ:omega111}
\varepsilon T^{-1}\circ T^{-1}\omega T^{-1}= T^{-1}\varepsilon \circ T^{-2}\omega
\end{align}
For this end, we observe that by using the naturalness of $\varepsilon$ twice, we have the following commutative diagram.
\[\xymatrix{
T^{-2} \ar[rr]^-{T^{-1}\omega T^{-1}} && T^{-1}TT^{-1} && T^{-1} \ar[ll]_-{T^{-1}\eta}\\
T^{-2}TT^{-1}\ar[u]^-{T^{-1}\varepsilon T^{-1}} \ar[rr]^-{T^{-2}T\omega T^{-1}}  && T^{-2}T^2T^{-1} \ar[u]^-{T^{-1}\varepsilon TT^{-1}} && T^{-2}T \ar[u]_-{T^{-1}\varepsilon} \ar[ll]_-{T^{-2}T\eta}
}\]
We have the following identity.
\begin{align*}
\varepsilon T^{-1}\circ T^{-1}\omega T^{-1} & =(T^{-1}\eta)^{-1}\circ T^{-1}\omega T^{-1}\\
                      &=T^{-1}\varepsilon \circ (T^{-2}T\eta)^{-1}\circ T^{-2}T\omega T^{-1}\circ (T^{-1}\varepsilon T^{-1})^{-1}\\
                      &=T^{-1}\varepsilon \circ (T^{-2}T\eta)^{-1}\circ T^{-2}\omega T T^{-1}\circ T^{-2}\eta\\
                      &=T^{-1}\varepsilon\circ T^{-2}\omega
\end{align*}
Here, the first  equality uses $\varepsilon T^{-1}=(T^{-1}\eta)^{-1}$, the second one uses the commutative diagram above, the third one uses $\omega T=T\omega$ and $\varepsilon T^{-1}=(T^{-1}\eta)^{-1}$, and the final one uses $(T\eta)^{-1}\circ \omega TT^{-1}\circ \eta=\omega$, which is a consequence of the naturalness of $\omega$.

Rewriting (\ref{equ:omega111}), we have
$$\omega^{(-1)}T^{-1}=T^{-1}\omega^{(-1)}.$$
Combining (\ref{equ:omega111}) with $\varepsilon T^{-1}=(T^{-1}\eta)^{-1}$, we obtain
$$T^{-1}\omega T^{-1}= T^{-1}\eta \circ T^{-1}\varepsilon\circ T^{-2}\omega.$$
Since $T^{-1}$ is  fully faithful, we infer that
\begin{align}\label{equ:omega2}
\omega T^{-1}=\eta\circ \varepsilon \circ T^{-1}\omega=\eta\circ \omega^{(-1)}.
\end{align}
\end{rem}

\vskip 3pt

Let $C$ be an object in $\mathcal{C}$. We have two \emph{trivial $\omega$-factorizations}
$$\theta^0(C)=(C, C; {\rm Id}_C, \omega_C) \mbox{ and }\theta^1(C)=(T^{-1}(C), C; \omega^{(-1)}_C, \eta_C).$$
Here, to verify that $\theta^1(C)$ is an $\omega$-factorization, one uses (\ref{equ:omega1}) and (\ref{equ:omega2}). This gives rise to two functors
\begin{align}\label{theta:1}
    \theta^s\colon \mathcal{C}\longrightarrow \mathbf{F}(\mathcal{C}; \omega)
\end{align}
for $s=0, 1$.

For $s=0,1$, we have the \emph{projection functors}
$${\rm pr}^s\colon \mathbf{F}(\mathcal{C}; \omega)\longrightarrow \mathcal{C},$$
which send $X$ to its $s$-th component $X^s$. For each  $\omega$-factorization $X$, we have the \emph{shifted $\omega$-factorization}
$$S(X)=(X^1, T(X^0); d_X^1, T(d_X^0)).$$
This gives rise to an autoequivalence of categories
$$S\colon \mathbf{F}(\mathcal{C}; \omega)\longrightarrow \mathbf{F}(\mathcal{C}; \omega).$$
We observe  two equalities
$${\rm pr}^1 S=T{\rm pr}^0 \mbox{ and } {\rm pr}^0 S={\rm pr}^1,$$
and two natural isomorphisms
$$S\theta^1 \simeq \theta^0\mbox{ and } \theta^1T\simeq S\theta^0.$$
For the first isomorphism, we use (\ref{equ:omega1}), and for the second one, we use (\ref{equ:omega11}).

The following results are analogous to \cite[Lemma~3.2]{Chen24}.

\begin{lem}\label{lem:3adj}
Keep the notation as above. Then the following statements hold.
\begin{itemize}
    \item[(1)] We have two adjoint pairs $(\theta^0, {\rm pr}^0)$ and $({\rm pr}^0, S\theta^0)$.
    \item[(2)] We have two adjoint pairs $(\theta^1, {\rm pr}^1)$ and $({\rm pr}^1, \theta^0)$.
    \item[(3)] We have an adjoint pair $({\rm pr}^1 S, \theta^1)$.
\end{itemize}
\end{lem}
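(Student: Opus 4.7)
The plan is to establish each of the five adjunctions in parts~(1)--(3) by directly exhibiting a natural bijection between the relevant hom-sets. The mechanism is uniform. A morphism in $\mathbf{F}(\mathcal{C};\omega)$ is a pair $(f^0,f^1)$ subject to two commutativity conditions, and in each adjunction one of the two factorizations is trivial, so one of its structure maps is an identity, a component of $\omega$, or the unit $\eta$ (which is invertible, since $T^{-1}$ is a quasi-inverse of $T$). Consequently one of the two commutativity conditions collapses to a formula expressing one component of $(f^0,f^1)$ in terms of the other, reducing the hom-set in $\mathbf{F}(\mathcal{C};\omega)$ to a hom-set in $\mathcal{C}$. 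The surviving condition is then seen to hold automatically, using the factorization axioms $d_X^1\circ d_X^0=\omega_{X^0}$ and $T(d_X^0)\circ d_X^1=\omega_{X^1}$ for the non-trivial factorization, naturality of $\omega$, and, where needed, the identities collected in Remark~\ref{rem:nat-trans}.

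For the three pairs not involving $\theta^1$, the checks are elementary and essentially symmetric. For $(\theta^0,{\rm pr}^0)$, the first axiom of a morphism $\theta^0(C)\to X$ forces $f^1=d_X^0\circ f^0$, after which the second reduces to naturality of $\omega$ at $f^0$. The pair $({\rm pr}^0,S\theta^0)$ is dual: the second axiom for $X\to S\theta^0(C)$ forces $f^1=T(f^0)\circ d_X^1$, and the first follows from naturality. The pair $({\rm pr}^1,\theta^0)$ is handled analogously, with the first axiom forcing $f^0=f^1\circ d_X^0$ and the second reducing to $T(d_X^0)\circ d_X^1=\omega_{X^1}$ combined with naturality of $\omega$. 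The assignments $(f^0,f^1)\mapsto f^0$ or $(f^0,f^1)\mapsto f^1$, as appropriate, furnish the bijections, and naturality in both variables is immediate.

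The main, if modest, obstacle lies with the two pairs involving $\theta^1(C)=(T^{-1}(C),C;\omega^{(-1)}_C,\eta_C)$, where the adjunction quadruple~(\ref{quad:1}) is essential. For $(\theta^1,{\rm pr}^1)$, the second axiom $d_X^1\circ f^1=T(f^0)\circ\eta_C$ for $(f^0,f^1)\colon\theta^1(C)\to X$ uniquely recovers $f^0$ from $f^1$ via the adjunction bijection $\phi\mapsto T(\phi)\circ\eta_C$. The first axiom $d_X^0\circ f^0=f^1\circ\omega^{(-1)}_C$ is then checked by applying $T$ and post-composing with $\eta_C$: both sides become $\omega_{X^1}\circ f^1$, the left by the factorization axiom on $X$ and the right by identity~(\ref{equ:omega1}) (i.e.\ $T\omega^{(-1)}\circ\eta=\omega$) combined with naturality of $\omega$; the adjunction bijection then yields the equality on the nose. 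The pair $({\rm pr}^1 S,\theta^1)$ is dual: for $X\to\theta^1(C)$, the second axiom determines $f^1$ from $f^0$ via the invertibility of $\eta$; the adjunction turns $f^0$ into a map $g=\eta_C^{-1}\circ T(f^0)\colon T(X^0)\to C$; and the first axiom becomes, after applying $T$, naturality of $\omega$ at $g$, again via~(\ref{equ:omega1}). Naturality of each bijection in both variables is then routine from the construction.
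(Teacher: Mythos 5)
Your proof is correct and follows essentially the same route as the paper's: you exhibit explicit natural bijections for each of the five adjunctions, using the fact that for a morphism to or from a trivial factorization one of the two commutativity axioms collapses (via the identity or the invertible unit $\eta$) and pins down one component in terms of the other, while the surviving axiom follows from the factorization equations on $X$, naturality of $\omega$, and the identities of Remark~\ref{rem:nat-trans}. The only cosmetic differences are that the paper also notes the shortcuts $S\theta^1\simeq\theta^0$ and $\theta^1T\simeq S\theta^0$ as an alternative derivation of (2) and (3), and that in verifying $({\rm pr}^1S,\theta^1)$ the paper invokes identity~(\ref{equ:omega11}) directly while you instead apply the faithful functor $T$ and use~(\ref{equ:omega1}) — both are equivalent bookkeeping; also, where you write ``post-composing with $\eta_C$'' you mean precomposing.
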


\begin{proof}
 (1) For each object $C$ in $\mathcal{C}$ and any $\omega$-factorization $X=(X^0, X^1; d_X^0, d_X^1)$, we have a natural isomorphism
\begin{align}\label{iso:theta0}
{\rm Hom}_\mathcal{C}(C, X^0)\longrightarrow {\rm Hom}_{\mathbf{F}(\mathcal{C}; \omega)}(\theta^0(C), X), \quad g \mapsto (g, d_X^0\circ g).
\end{align}
This yields the first adjoint pair. For the second one, we use the following isomorphism
$$ {\rm Hom}_\mathcal{C}(X^0, C)\longrightarrow {\rm Hom}_{\mathbf{F}(\mathcal{C}; \omega)}(X, S\theta^0(C)),  \quad g \mapsto (g, T(g)\circ d_X^1).$$

(2) The adjunctions follow by combining the observation above and (1). Alternatively,  the first adjoint pair follows from the following isomorphism
\begin{align}\label{iso:theta1}
{\rm Hom}_\mathcal{C}(C, X^1)\longrightarrow {\rm Hom}_{\mathbf{F}(\mathcal{C}; \omega)}(\theta^1(C), X), \quad g\mapsto (\varepsilon_{X^0}\circ T^{-1}(d_X^1\circ g),g).
\end{align}
The second one uses the following isomorphism
\begin{align}\label{iso:theta01}
    {\rm Hom}_\mathcal{C}(X^1, C)\longrightarrow {\rm Hom}_{\mathbf{F}(\mathcal{C}; \omega)}(X, \theta^0(C)), \quad g\mapsto (g\circ d_X^0,g).
\end{align}

(3) This adjoint pair follows from the isomorphism $S\theta^1 \simeq \theta^0$ and the adjoint pair $({\rm pr}^1, \theta^0)$. Alternatively, we have a natural isomorphism
\begin{align}\label{iso:theta11}
   {\rm Hom}_\mathcal{C}(T(X^0), C)\longrightarrow {\rm Hom}_{\mathbf{F}(\mathcal{C}; \omega)}(X, \theta^1(C)), \quad g\mapsto (T^{-1}(g)\circ (\varepsilon_{X^0})^{-1}, g\circ d_X^1).
\end{align}
Here, to verify that $(T^{-1}(g)\circ (\varepsilon_{X^0})^{-1}, g\circ d_X^1)\colon X \rightarrow \theta^1(C)$ is a morphism, one uses $\omega^{(-1)}T\circ \varepsilon^{-1}=\omega$; see (\ref{equ:omega11}).
\end{proof}

\section{Preliminaries on exact categories} \label{sec:3}

In this section, we collect some basic facts and notions on exact
categories and Frobenius categories. The basic references
are \cite[Appendix~A]{Kel90} and \cite{Bu10}.

\vskip 5pt

Let $\mathcal{A}$ be an additive category. A \emph{composable pair}
of morphisms is a sequence
$$X\stackrel{i} \longrightarrow Y \stackrel{d}
\longrightarrow Z,$$
which is denoted by $(i, d)$. Two
composable pairs $(i, d)$ and $(i', d')$ are \emph{isomorphic}
provided that there are isomorphisms $f\colon X\rightarrow X'$,
$g\colon Y\rightarrow Y'$ and $h\colon Z\rightarrow Z'$ such that
$$g\circ i=i'\circ f \mbox{ and } h\circ d=d' \circ g.$$
A composable pair $(i, d)$ is called a \emph{kernel-cokernel pair}  provided that
$i$ is a kernel of $d$ and that $d$ is a cokernel of $i$.

 An \emph{exact structure} on $\mathcal{A}$
 is a chosen class $\mathcal{E}$ of  kernel-cokernel pairs in
 $\mathcal{A}$, which is closed under isomorphisms and
is subject to the following axioms (Ex0), (Ex1), (Ex1)$^{\rm op}$,
(Ex2) and (Ex2)$^{\rm op}$. Any pair $(i, d)$ in the exact structure
$\mathcal{E}$ is called a \emph{conflation}, while $i$ is called an
\emph{inflation} and $d$ is called a \emph{deflation}. The pair
$(\mathcal{A}, \mathcal{E})$ is called an \emph{exact category} \cite{Qui73}.
When the exact structure $\mathcal{E}$ is understood, we will  simply say that $\mathcal{A}$ is an exact category.

Following \cite[Appendix~A]{Kel90}, the axioms of  an exact category are
listed as follows:

\begin{enumerate}
\item[(Ex0) \;  ] the identity morphism of the zero object is a deflation;

\item[(Ex1) \;  ] the composition of any  two deflations is a deflation;

\item[(Ex1)$^{\rm op}$] the composition of any two inflations is an
inflation;

\item[(Ex2) \;  ] for a deflation $d\colon Y \rightarrow Z$ and a
morphism $f\colon Z'\rightarrow Z$, there exists a pullback diagram in $\mathcal{A}$
such that $d'$ is a deflation:
\[\xymatrix{  Y' \ar@{.>}[r]^{d'} \ar@{.>}[d]_-{f'} & Z' \ar[d]^{f} \\
   Y  \ar[r]^-{d} & Z}\]

\item[(Ex2)$^{\rm op}$] for an inflation $i\colon X \rightarrow Y$ and a
morphism $f\colon X\rightarrow X'$, there exists a  pushout diagram in $\mathcal{A}$
such that $i'$ is an inflation:
\[\xymatrix{  X \ar[r]^-{i} \ar[d]_-{f} & Y \ar@{.>}[d]^-{f'} \\
   X'  \ar@{.>}[r]^-{i'} & Y'}\]
\end{enumerate}

We remark that the axiom (Ex1)$^{\rm op}$ can be deduced from
the remaining ones; see \cite[Appendix~A]{Kel90}.

 For an exact category $\mathcal{A}$, a full additive subcategory
$\mathcal{B}\subseteq \mathcal{A}$ is said to be
\emph{extension-closed} provided that for any conflation
$X\stackrel{i} \rightarrow Y \stackrel{d}\rightarrow Z$ with $X,
Z\in \mathcal{B}$ we necessarily have $Y\in \mathcal{B}$. In this case, the
subcategory $\mathcal{B}$ inherits the exact structure from
$\mathcal{A}$ and becomes an exact category.

\begin{exm}
  {\rm  Any abelian category  has a canonical exact structure such that
conflations are induced by short exact sequences. Consequently, any
 extension-closed subcategory in an abelian category
has a \emph{canonical exact structure} and becomes an
exact category.}
\end{exm}

Let $\mathcal{A}$ be an exact category. Recall that an object $P$ in $\mathcal{A}$ is \emph{projective}
provided that the functor ${\rm Hom}_\mathcal{A}(P, -)$ sends
conflations to short exact sequences of abelian groups; this is equivalent to that any
deflation ending at $P$ splits. The exact category $\mathcal{A}$ is
said to \emph{have enough projective objects}, provided that each
object $X$ fits into a deflation $d\colon P\rightarrow X$ with $P$
projective. Dually, one defines \emph{injective object}
and \emph{having enough injective objects}.

Assume that $\mathcal{A}$ has enough projective objects. The (projectively) stable category $\underline{\mathcal{A}}$ of $\mathcal{A}$ is defined as follows. It has the same objects as $\mathcal{A}$. For two objects $X$ and $Y$, its Hom group, denoted by $\underline{\rm Hom}_\mathcal{A}(X, Y)$, is defined to be the quotient of ${\rm Hom}_\mathcal{A}(X, Y)$ modulo  the subgroup formed by those morphisms that factor through projective objects.

An exact category $\mathcal{A}$ is said to be \emph{Frobenius},
provided that it has enough projective and enough injective objects,
 and the class of projective objects coincides with the class of
injective objects; see \cite[Section~3]{Hel60}. The importance of
Frobenius categories lies in  the following fundamental result: the stable category of a Frobenius category has a canonical triangulated structure; see \cite[I.2]{Hap} and \cite[1.2]{Kel90}.

Recall that an additive functor $F\colon \mathcal{A}\rightarrow
\mathcal{A}'$ between two exact categories is called \emph{exact}
provided that it sends conflations to conflations; an exact functor
$F\colon \mathcal{A}\rightarrow \mathcal{A}'$ is said to be an
\emph{equivalence of exact categories} provided that $F$ is an
equivalence and there exists a quasi-inverse of $F$ which is also exact.

Let $\mathcal{A}$ and $\mathcal{A}'$ be two Frobenius categories. Assume that $F\colon \mathcal{A}\rightarrow
\mathcal{A}'$ is an exact functor which sends projective objects to projective objects. Then it induces a triangle functor $F\colon \underline{\mathcal{A}}\rightarrow \underline{\mathcal{A}}'$ between their stable categories.

Let $A$ be any  ring. Denote by $A\mbox{-Mod}$ the abelian category of left $A$-modules, and by $A\mbox{-Proj}$ it full subcategory formed by projective modules. Furthermore, $A\mbox{-mod}$ and $A\mbox{-proj}$  denote the full subcategory formed by finitely generated $A$-modules and finitely generated projective $A$-modules, respectively.

The main examples of our concern are as follows.

\begin{exm}
    {\rm A unbounded complex $P^\bullet=(P^n, d_P^n)_{n\in \mathbb{Z}}$ of projective $A$-modules is called \emph{totally acyclic} if it is acyclic and for any projective $A$-modules $Q$, the Hom complex ${\rm Hom}_A(P^\bullet, Q)$ is also acyclic. An $A$-modules $G$ is called \emph{Gorenstein projective} \cite{EJ} if there is a totally acyclic complex $P^\bullet$ such that $G$ is isomorphic to $Z^1(P^\bullet)$, the first cocycle of $P^\bullet$. We observe that projective modules are Gorenstein projective.

    Denote by $A\mbox{-GProj}$ the category formed by Gorenstein projective modules.  By \cite[Theorem~2.5]{Holm} or \cite[Lemma~2.3]{AM}, the subcategory  $A\mbox{-GProj}$ of  $A\mbox{-Mod}$  is closed under extensions, and becomes an exact category. Moreover, it is Frobenius, whose projective-injective objects are precisely all projective $A$-modules; see \cite[Proposition~3.8(i)]{Bel}. Consequently, the stable category $A\mbox{-\underline{GProj}}$ is canonically triangulated.

    When the ring $A$ is left noetherian, we denote by $A\mbox{-Gproj}$ the category of finitely generated Gorenstein projective modules. It is also a Frobenius exact category, whose projective objects are precisely finitely generated projective $A$-modules. }
\end{exm}

Let $e=e^2$ be an idempotent in $A$. Then $eAe$ is a ring with $e$ its unit. We consider the Schur functor
$$S_e\colon A\mbox{-Mod}\longrightarrow eAe\mbox{-Mod}, \; M\mapsto eM.$$

\begin{defn}
The idempotent $e$ is said to be \emph{GP-compatible}, provided that the left $eAe$-module $eA$ is projective and that for each Gorenstein projective $A$-modules $G$, the $eAe$-module $eG$ is Gorenstein projective.  \hfill $\square$
\end{defn}

Following \cite{DI}, we set
$$A\mbox{-GProj}^e=\{G\in A\mbox{-GProj}\; |\; \mbox{ the } eAe\mbox{-module } eG \mbox{ is projective}\},$$
which is an extension-closed subcategory of $A\mbox{-GProj}$ and becomes an exact category. When $e$ is GP-compatible, the exact category $A\mbox{-GProj}^e$ is Frobenius. The stable category $A\mbox{-\underline{GProj}}^e$ is the essential kernel of the \emph{induced Schur functor}
$$S_e\colon A\mbox{-\underline{GProj}}\longrightarrow eAe\mbox{-\underline{GProj}}.$$
We refer to Proposition~\ref{prop:M-2} for concrete examples on GP-compatible idempotents.

Let us recall a recollement \cite[1.4]{BBD} situation between stable categories.   Assume that $\pi\colon \mathcal{B} \rightarrow \mathcal{A}$ is an exact functor between two Frobenius categories,  which sends projective objects to projective objects. We assume further that $\pi$ has a left adjoint $\pi_\lambda$ and a right adjoint $\pi_\rho$, both of which send projective objects to projective objects. We will require that both $\pi_\lambda$ and $\pi_\rho$ are fully faithful.

We have the induced triangle functors
$$\pi\colon \underline{\mathcal{B}}\longrightarrow \underline{\mathcal{A}}, \;  \pi_\lambda \colon \underline{\mathcal{A}}\longrightarrow \underline{\mathcal{B}} \mbox{ and }  \pi_\rho \colon \underline{\mathcal{A}}\longrightarrow \underline{\mathcal{B}}. $$
Set $\mathcal{K}=\{B\in \mathcal{B}\; |\; \pi(B) \mbox{ is projective in }\mathcal{A}\}$; it is also a Frobenius exact category. Moreover, its stable category $\underline{\mathcal{K}}$ coincides with the essential kernel of $\pi\colon \underline{\mathcal{B}}\rightarrow \underline{\mathcal{A}}$.

The following result is well known; see  \cite[Lemmas~2.3 and 2.4]{Chen-VB}.

\begin{lem} \label{lem:rec}
    Keep the assumptions above. Then we have an induced recollement.
    \[\xymatrix{
  \underline{\mathcal{K}}\;\ar[rr]|-{\rm inc} &&\; \underline{\mathcal{B}}\; \ar[rr]|-{\pi}
  \ar@/^1.5pc/[ll]|{{\rm inc}_\rho}  \ar@/_1.5pc/[ll]|{{\rm inc}_\lambda} &&
  \; \underline{\mathcal{A}}  \ar@/^1.5pc/[ll]|{\pi_\rho}  \ar@/_1.5pc/[ll]|{\pi_\lambda}
}\]
    Here, ``${\rm inc}$" denotes the inclusion functor. Moreover, the functors ${\rm inc}_\lambda$ and ${\rm inc}_\rho$ are determined by the following  exact triangles in $\underline{\mathcal{B}}$
    $$\pi_\lambda \pi(X)\rightarrow X\rightarrow {\rm inc}_\lambda(X)\rightarrow \Sigma\pi_\lambda \pi(X) \mbox{ and } {\rm inc}_\rho(X)\rightarrow X\rightarrow \pi_\rho \pi(X)\rightarrow \Sigma {\rm inc}_\rho(X).$$
\end{lem}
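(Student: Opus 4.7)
The strategy is to descend the biadjoint situation $\pi_\lambda \dashv \pi \dashv \pi_\rho$ to the stable categories, and then build the inclusion-side adjoints as cones of the resulting counit and unit.

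First, since $\pi$, $\pi_\lambda$ and $\pi_\rho$ are exact and preserve projective objects, they induce triangle functors on the stable categories. The components of the unit and counit of either adjunction, being morphisms in $\mathcal{A}$ or $\mathcal{B}$, remain well-defined modulo morphisms that factor through projective-injective objects, and the triangle identities pass to the quotient; hence both adjoint pairs $(\pi_\lambda, \pi)$ and $(\pi, \pi_\rho)$ descend to the stable level. The hypothesis that $\pi_\lambda, \pi_\rho$ are fully faithful on $\mathcal{A}$ forces $\mathrm{Id} \to \pi\pi_\lambda$ and $\pi\pi_\rho \to \mathrm{Id}$ to be isomorphisms already in $\mathcal{A}$, so they remain isomorphisms in $\underline{\mathcal{A}}$, and the induced $\pi_\lambda, \pi_\rho$ on $\underline{\mathcal{A}}$ are fully faithful. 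The identification of $\underline{\mathcal{K}}$ with the essential kernel of $\pi\colon \underline{\mathcal{B}} \to \underline{\mathcal{A}}$ is supplied by the setup.

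Next, for each $X \in \underline{\mathcal{B}}$, I would complete the descended counit $\pi_\lambda\pi(X) \to X$ to an exact triangle as in the statement, and define $\mathrm{inc}_\lambda(X)$ to be its third term. Applying $\pi$ and invoking the triangle identity together with the isomorphism $\mathrm{Id} \xrightarrow{\sim} \pi\pi_\lambda$, the image of the counit under $\pi$ is itself an isomorphism, so $\pi\,\mathrm{inc}_\lambda(X)=0$ and thus $\mathrm{inc}_\lambda(X) \in \underline{\mathcal{K}}$. For any $Y \in \underline{\mathcal{K}}$, applying $\underline{\Hom}_\mathcal{B}(-,Y)$ to the triangle makes the two flanking terms vanish, because
$$\underline{\Hom}_\mathcal{B}(\Sigma^i \pi_\lambda\pi(X), Y) \cong \underline{\Hom}_\mathcal{A}(\Sigma^i \pi(X), \pi(Y)) = 0,$$
using $\pi(Y)=0$. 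This yields the adjunction bijection $\underline{\Hom}_\mathcal{B}(\mathrm{inc}_\lambda(X), Y) \cong \underline{\Hom}_\mathcal{B}(X,Y)$, and functoriality of $\mathrm{inc}_\lambda$ then follows from uniqueness of cones under this adjunction. The construction of $\mathrm{inc}_\rho$ is dual, using the triangle attached to the unit $X \to \pi_\rho\pi(X)$ and the isomorphism $\pi\pi_\rho \xrightarrow{\sim} \mathrm{Id}$.

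The step I expect to require the most care is the descent in the first paragraph: one must check that the unit and counit of each adjunction are well-defined natural transformations between the induced triangle functors on the stable categories and that the triangle identities survive modulo morphisms factoring through projectives. Once this descent is in hand, the remainder is the standard cone-by-adjunction construction, and the two exact triangles displayed in the statement appear automatically as the defining triangles of $\mathrm{inc}_\lambda$ and $\mathrm{inc}_\rho$.
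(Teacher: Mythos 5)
Your argument is correct. The paper itself does not prove Lemma~\ref{lem:rec}; it labels the statement ``well known'' and cites \cite[Lemmas~2.3 and 2.4]{Chen-VB}, so there is no internal proof to compare against. Your cone-by-adjunction construction is exactly the standard route one would find in that kind of reference, and the steps you flag as the delicate part (descent of the adjunctions to the stable categories) are handled adequately: because $\pi$, $\pi_\lambda$, $\pi_\rho$ are all exact and preserve projectives, a morphism in $\Hom_{\mathcal{B}}(\pi_\lambda A, B)$ factors through a projective if and only if its adjoint in $\Hom_{\mathcal{A}}(A,\pi B)$ does, so the adjunction bijections pass to the stable $\underline{\Hom}$ groups, and since the unit of $(\pi_\lambda,\pi)$ (resp.\ counit of $(\pi,\pi_\rho)$) is already an isomorphism in $\mathcal{A}$ by full faithfulness, it stays one after stabilization. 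Your verification that $\pi(\mathrm{inc}_\lambda(X))\cong 0$ and that $\underline{\Hom}(\mathrm{inc}_\lambda(X),Y)\cong\underline{\Hom}(X,Y)$ for $Y\in\underline{\mathcal{K}}$ gives the object-wise left adjoint, and representability then supplies functoriality and the triangle-functor structure for free (a left or right adjoint of a triangle functor is automatically a triangle functor). Dualizing for $\mathrm{inc}_\rho$ and observing $\pi\circ\mathrm{inc}\simeq 0$ on $\underline{\mathcal{K}}$ gives all the axioms of a recollement, with the two displayed triangles being precisely the gluing triangles of \cite[1.4]{BBD}. This matches what the cited reference does; no gap.
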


\section{Exact structures on factorizations}\label{sec:4}

 In this section, we study the canonical exact structure on the category of factorizations.  We prove that the category of factorizations is Frobenius, provided that so is the category of underlying  objects; see Proposition~\ref{prop:Frobenius}.

 Let $\mathcal{A}$ be an additive category. Fix an autoequivalence $T$ on $\mathcal{A}$ and a natural transformation $\omega\colon {\rm Id}_\mathcal{A}\rightarrow T$ satisfying $\omega T=T\omega$. The category $\mathbf{F}(\mathcal{A}; \omega)$ of $\omega$-factorizations is additive.

The following fact is elementary.

\begin{lem}\label{lem:kerinF}
    Let $f=(f^0, f^1)\colon X\rightarrow Y$ be a morphism in $\mathbf{F}(\mathcal{A}; \omega)$. Assume that both $f^0$ and $f^1$ have kernels in $\mathcal{A}$. Then the morphism $f$ has a kernel, which is given componentwise.
\end{lem}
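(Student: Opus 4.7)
The plan is to construct the kernel $K$ of $f$ in $\mathbf{F}(\mathcal{A};\omega)$ componentwise and then verify its universal property. Write $k^0\colon K^0\to X^0$ and $k^1\colon K^1\to X^1$ for the given kernels of $f^0$ and $f^1$ in $\mathcal{A}$. Since $T$ is an autoequivalence of $\mathcal{A}$, it is simultaneously a left and right adjoint to $T^{-1}$ and hence preserves kernels; in particular $T(k^0)\colon T(K^0)\to T(X^0)$ is a kernel of $T(f^0)$, and all four of $k^0, k^1, T(k^0), T(k^1)$ are monic. This kernel-preservation by $T$ is the one non-mechanical ingredient and is what makes the construction work.

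Next I would produce the two differentials of $K$. Since $f^1\circ d_X^0\circ k^0 = d_Y^0\circ f^0\circ k^0=0$, the universal property of $k^1$ yields a unique $d_K^0\colon K^0\to K^1$ with $k^1\circ d_K^0 = d_X^0\circ k^0$. Similarly, from $T(f^0)\circ d_X^1\circ k^1 = d_Y^1\circ f^1\circ k^1=0$, the universal property of $T(k^0)$ yields a unique $d_K^1\colon K^1\to T(K^0)$ with $T(k^0)\circ d_K^1 = d_X^1\circ k^1$. These two identities also guarantee that $(k^0, k^1)\colon K\to X$ is a morphism in $\mathbf{F}(\mathcal{A};\omega)$, once $K:=(K^0, K^1; d_K^0, d_K^1)$ is confirmed to be an $\omega$-factorization.

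The relations $d_K^1\circ d_K^0=\omega_{K^0}$ and $T(d_K^0)\circ d_K^1=\omega_{K^1}$ are then verified by postcomposing with the monomorphisms $T(k^0)$ and $T(k^1)$ respectively. For the first,
\[
T(k^0)\circ d_K^1\circ d_K^0 = d_X^1\circ k^1\circ d_K^0 = d_X^1\circ d_X^0\circ k^0 = \omega_{X^0}\circ k^0 = T(k^0)\circ \omega_{K^0},
\]
using the naturality of $\omega$ at the last step; cancelling the mono $T(k^0)$ gives the claim. The second identity is entirely analogous, starting from $T(d_X^0)\circ d_X^1=\omega_{X^1}$ and cancelling $T(k^1)$.

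For the universal property, any morphism $g=(g^0, g^1)\colon Z\to X$ in $\mathbf{F}(\mathcal{A};\omega)$ with $f\circ g=0$ satisfies $f^s\circ g^s=0$ and so factors uniquely in $\mathcal{A}$ as $g^s=k^s\circ h^s$ for $s=0,1$. Compatibility of $(h^0, h^1)$ with the differentials of $Z$ and $K$ drops out by postcomposing the desired equations with $k^1$ and $T(k^0)$ and invoking that $g$ already respects differentials. Uniqueness of $(h^0, h^1)$ follows from the uniqueness in each component. The whole argument is a diagram chase; the only subtle point, and the reason one must work with an autoequivalence $T$ rather than a mere endofunctor, is that the mono needed to cancel on the $d_K^1$ side is $T(k^0)$ and has to genuinely be a kernel.
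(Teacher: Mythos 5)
Your proof is correct and follows the same route as the paper: construct $K^0$, $K^1$ componentwise, obtain $d_K^0$ from the universal property of $k^1$, obtain $d_K^1$ from the universal property of $T(k^0)$ (relying on $T$ preserving kernels, which the paper invokes in the same way), and then check the factorization identities and universal property. The paper compresses the verifications into "it is routine to verify," whereas you spell them out by postcomposing with the monomorphisms $T(k^0)$ and $T(k^1)$; the substance is the same.
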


\begin{proof}
Assume that $i^s\colon K^s\rightarrow X^s$ is  a kernel of $f^s$ for $s=0,1$. There is a unique morphism $d_K^0\colon K^0\rightarrow K^1$ satisfying $i^1\circ d_K^0=d_X^0\circ i^0$. Since $T(i^0)$ is a kernel of $T(f^0)$,  there exists a unique morphism $d_K^1\colon K^1\rightarrow T(K^0)$ satisfying $T(i^0)\circ d_K^1=d_X^1\circ i^1$. It is rountine to verify that $K=(K^0, K^1;d_K^0, d_K^1)$ is an $\omega$-factorization, and that the morphism $i=(i^0, i^1)\colon K\rightarrow X$ is a required kernel.
\end{proof}

We assume further that $(\mathcal{A},\mathcal{E})$ is an exact category. Denote by $\tilde{\mathcal{E}}$ the class of composable pairs $(i, p)$ in $\mathbf{F}(\mathcal{A}; \omega)$ such that both $(i^0,p^0)$ and $(i^1, p^1)$ belong to $\mathcal{E}$. By Lemma~\ref{lem:kerinF} and its dual, any element in  $\tilde{\mathcal{E}}$ is a kernel-cokernel pair in $\mathbf{F}(\mathcal{A}; \omega)$.

The following result is analogous to the known fact in \cite[Lemma~2.4]{BDFIK}: if $\mathcal{A}$ is abelian, so is $\mathbf{F}(\mathcal{A}; \omega)$.

\begin{lem}\label{lem:F-ex}
    The class $\tilde{\mathcal{E}}$ is an exact structure on $\mathbf{F}(\mathcal{A}; \omega)$. Consequently, we have an exact category $(\mathbf{F}(\mathcal{A}; \omega), \tilde{\mathcal{E}})$.
\end{lem}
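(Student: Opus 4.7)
The plan is to verify the axioms (Ex0), (Ex1), (Ex2) and (Ex2)$^{\rm op}$ for $\tilde{\mathcal{E}}$ on $\mathbf{F}(\mathcal{A};\omega)$; the remaining axiom (Ex1)$^{\rm op}$ is then automatic by the remark following the list of axioms in Section~\ref{sec:3}. In each case the verification will reduce, componentwise, to the corresponding axiom for $\mathcal{E}$ in $\mathcal{A}$.

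First, (Ex0) and (Ex1) are immediate: the zero object, identity morphisms, and composition in $\mathbf{F}(\mathcal{A};\omega)$ are all computed componentwise, and $\tilde{\mathcal{E}}$ itself is defined componentwise, so both axioms are inherited directly from $(\mathcal{A},\mathcal{E})$.

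The heart of the argument is (Ex2). Given a deflation $p=(p^0,p^1)\colon Y\to Z$ in $\tilde{\mathcal{E}}$ and a morphism $f=(f^0,f^1)\colon Z'\to Z$, I would first apply (Ex2) in $\mathcal{A}$ componentwise to obtain pullback squares
\[
\xymatrix{
Y'^s \ar[r]^-{p'^{\,s}} \ar[d]_{f'^{\,s}} & Z'^{\,s} \ar[d]^{f^s} \\
Y^s \ar[r]^-{p^s} & Z^s
}
\]
for $s=0,1$, with each $p'^{\,s}$ a deflation. The key step is to equip $(Y'^0,Y'^1)$ with differentials making it an $\omega$-factorization and the maps $f',p'$ morphisms of factorizations. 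Using that $f$ is a morphism of factorizations, one checks the compatibility $p^1\circ d_Y^0\circ f'^{\,0}=f^1\circ d_{Z'}^0\circ p'^{\,0}$, so the universal property of the pullback at $s=1$ yields a unique $d_{Y'}^0\colon Y'^0\to Y'^1$ with $f'^{\,1}\circ d_{Y'}^0=d_Y^0\circ f'^{\,0}$ and $p'^{\,1}\circ d_{Y'}^0=d_{Z'}^0\circ p'^{\,0}$. Since $T$ is an autoequivalence, applying $T$ to the $s=0$ pullback square yields another pullback square, and the analogous construction produces $d_{Y'}^1\colon Y'^1\to T(Y'^0)$.

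It remains to verify the two $\omega$-factorization identities on $Y'$ and the universal property of the resulting square in $\mathbf{F}(\mathcal{A};\omega)$. For both, I would compose with the jointly monic pair $(f'^{\,s},p'^{\,s})$ coming from the componentwise pullback and reduce to the corresponding identities in $Y$ and $Z'$, using the naturality of $\omega$ at the morphisms $f'^{\,s}$ and $p'^{\,s}$. The axiom (Ex2)$^{\rm op}$ is proved dually, with pushouts in $\mathcal{A}$ and the fact that $T$, as an equivalence, preserves pushouts; the differentials on the pushout object are defined from the dual universal property. The main obstacle is purely bookkeeping: threading the pullback (respectively, pushout) universal property through both components and checking the two factorization identities. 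Once $d_{Y'}^0$ and $d_{Y'}^1$ are in place, every remaining verification is a joint-monic (or joint-epic) cancellation in the componentwise squares, exactly as in the abelian prototype \cite[Lemma~2.4]{BDFIK}.
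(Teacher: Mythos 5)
Your proof is correct, and it follows essentially the same route as the paper's: in both cases (Ex2) is established by taking componentwise pullbacks, lifting the differentials via the universal property (using naturality of $\omega$ and the jointly-monic projections), and checking the factorization identities componentwise. The only cosmetic difference is that the paper packages the key step once, in Lemma~\ref{lem:kerinF} about kernels, and then applies it to (Ex2) via the observation that the pullback of a deflation along a morphism is the kernel of the induced map $Y\oplus Z'\to Z$, while you unfold the universal-property argument directly at the level of pullbacks; the underlying construction is identical.
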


\begin{proof}
    The verification of the axioms above is routine. To show (Ex2), we note that a pullback diagram is essentially the same as a certain kernel. Therefore, by Lemma~\ref{lem:kerinF} the required pullback in $\mathbf{F}(\mathcal{A}; \omega)$ exists and is given componentwise. Dually, one verifies (Ex2)$^{\rm  op}$.
\end{proof}

\begin{lem}\label{lem:proj}
Assume that $\mathcal{A}$ has enough projective objects. Then so does $\mathbf{F}(\mathcal{A}; \omega)$. Moreover, an $\omega$-factorization is projective if and only if it is a direct summand of $\theta^0(P)\oplus \theta^1(Q)$ for some projective objects $P$ and $Q$ in $\mathcal{A}$.
\end{lem}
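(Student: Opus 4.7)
The plan is to proceed in three steps: first show each $\theta^s(R)$ is projective when $R$ is projective in $\mathcal{A}$; then construct an explicit deflation $\theta^0(P)\oplus\theta^1(Q)\twoheadrightarrow X$ onto an arbitrary $\omega$-factorization $X$; finally deduce the characterization of projectives.

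For the first step, Lemma~\ref{lem:F-ex} shows that $\tilde{\mathcal{E}}$ is componentwise, so each projection $\mathrm{pr}^s\colon \mathbf{F}(\mathcal{A};\omega)\to\mathcal{A}$ is exact. By Lemma~\ref{lem:3adj}, $\theta^s$ is left adjoint to $\mathrm{pr}^s$ for $s=0,1$. A left adjoint of an exact functor preserves projectives, since for $R$ projective the natural isomorphism $\mathrm{Hom}_{\mathbf{F}(\mathcal{A};\omega)}(\theta^s(R),-) \cong \mathrm{Hom}_{\mathcal{A}}(R,\mathrm{pr}^s(-))$ is exact on conflations. Thus $\theta^0(P)\oplus\theta^1(Q)$ and any of its direct summands are projective in $\mathbf{F}(\mathcal{A};\omega)$.

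For the second step, fix $X$ and choose deflations $p\colon P\twoheadrightarrow X^0$ and $q\colon Q\twoheadrightarrow X^1$ with $P,Q\in\mathcal{A}$ projective. Via the adjunction bijections (\ref{iso:theta0}) and (\ref{iso:theta1}), these correspond to morphisms $\theta^0(P)\to X$ and $\theta^1(Q)\to X$ whose direct sum yields $f\colon \theta^0(P)\oplus\theta^1(Q)\to X$ with components
$$f^0 = \bigl(p,\;\varepsilon_{X^0}\circ T^{-1}(d_X^1\circ q)\bigr),\qquad f^1 = \bigl(d_X^0\circ p,\; q\bigr).$$
By the componentwise description of $\tilde{\mathcal{E}}$, showing $f$ is a deflation reduces to showing $f^0$ and $f^1$ are deflations in $\mathcal{A}$. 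This is the crux and the main obstacle: a map $(g,h)\colon M\oplus M'\to N$ from a direct sum is not automatically a deflation even if one of $g,h$ is. To handle $f^1$, I would use projectivity of $P$ to lift $d_X^0\circ p$ through the deflation $q$, obtaining $m\colon P\to Q$ with $q\circ m=d_X^0\circ p$; the automorphism $(x,y)\mapsto (x,y-m(x))$ of $P\oplus Q$ then transforms $f^1$ into the composite $P\oplus Q\xrightarrow{\mathrm{pr}_2}Q\xrightarrow{q}X^1$, a composition of two deflations, hence a deflation. The same trick handles $f^0$ once we know $T^{-1}(Q)$ is projective, which holds because $T$ (and hence $T^{-1}$) is an exact autoequivalence of $\mathcal{A}$.

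For the third step, any projective $Y\in\mathbf{F}(\mathcal{A};\omega)$ receives a deflation $\theta^0(P)\oplus\theta^1(Q)\twoheadrightarrow Y$ by the construction above, and this deflation splits by projectivity of $Y$, exhibiting $Y$ as a direct summand. The converse direction is immediate from the first step.
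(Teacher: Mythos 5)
Your proposal is correct and follows the same outline as the paper's proof: use the adjunctions $(\theta^0,\mathrm{pr}^0)$ and $(\theta^1,\mathrm{pr}^1)$ to see that $\theta^0(P)$ and $\theta^1(Q)$ are projective; assemble the two morphisms $\theta^0(P)\to X$ and $\theta^1(Q)\to X$ coming from deflations $P\twoheadrightarrow X^0$ and $Q\twoheadrightarrow X^1$ into a deflation $\theta^0(P)\oplus\theta^1(Q)\twoheadrightarrow X$; and split it when $X$ is projective. The components $f^0,f^1$ you write down are exactly the ones in the paper.

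One correction concerning what you called ``the crux and the main obstacle.'' The claim that $(g,h)\colon M\oplus M'\to N$ is a deflation whenever $g$ is a deflation holds in \emph{every} exact category, for \emph{every} $M'$, with no projectivity hypothesis. Indeed one factors
$$(g,h)\;=\;\begin{pmatrix}1&0\end{pmatrix}\circ\begin{pmatrix}1&h\\0&1\end{pmatrix}\circ\begin{pmatrix}g&0\\0&1\end{pmatrix}\colon\ M\oplus M'\longrightarrow N\oplus M'\longrightarrow N\oplus M'\longrightarrow N,$$
where the right factor is $g\oplus\mathrm{id}_{M'}$ (a direct sum of deflations), the middle factor is an isomorphism, and the left factor is a split epimorphism, hence a deflation; then apply (Ex1). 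So both $f^0$ and $f^1$ are deflations for free, and the paper says only ``Combining these two morphisms, we obtain a deflation.'' Your automorphism trick via a lift $m\colon P\to Q$ is valid but uses the projectivity of $P$ unnecessarily, and for $f^0$ it additionally needs $T^{-1}(Q)$ to be projective, which forces you to assume that $T$ is exact---a hypothesis the paper only imposes later (just before Proposition~\ref{prop:Frobenius}), not in this lemma. Dropping that detour both shortens the argument and removes the extraneous hypothesis.
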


\begin{proof}
    By the adjunctions (\ref{iso:theta0}) and (\ref{iso:theta1}), both $\theta^0(P)$ and $\theta^1(Q)$ are projective for any projective objects $P, Q$ in $\mathcal{A}$. Let $X=(X^0, X^1; d_X^0, d_X^1)$ be any $\omega$-factorization. Take two deflations $a\colon P\rightarrow X^0$ and $b\colon Q\rightarrow X^1$ with $P, Q$ projective in $\mathcal{A}$. Then we obtain two morphisms in $\mathbf{F}(\mathcal{A}; \omega)$:
    $$(a, d_X^0\circ a)\colon \theta^0(P) \longrightarrow X \mbox{ and }  (\varepsilon_{X^0}\circ T^{-1}(d_X^1\circ b),\; b)\colon \theta^1(Q)\longrightarrow X.$$
    Combining these two morphisms, we obtain a deflation $\theta^0(P)\oplus \theta^1(Q)\rightarrow X$. This proves that  $\mathbf{F}(\mathcal{A}; \omega)$ has enough projective objects. Moreover, if $X$ is projective, this deflation splits. This yields the final statement.
\end{proof}

In what follows, we will assume that the autoequivalence $T$ on $\mathcal{A}$ is an exact autoequivalence.

\begin{prop}\label{prop:Frobenius}
    Let $(\mathcal{A}, \mathcal{E})$ be a Frobenius exact category. Then so is $(\mathbf{F}(\mathcal{A}; \omega), \tilde{\mathcal{E}})$.
\end{prop}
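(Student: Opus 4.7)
The plan is to verify the three conditions characterizing a Frobenius exact category: enough projectives, enough injectives, and coincidence of the two classes. Enough projectives is already given by Lemma~\ref{lem:proj}, with projective objects being summands of $\theta^0(P)\oplus \theta^1(Q)$ for $P, Q$ projective in $\mathcal{A}$. The argument then splits naturally into (a) showing these standard projectives are also injective, and (b) producing enough injectives.

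For (a), I would exploit Lemma~\ref{lem:3adj}. The projection ${\rm pr}^1$ is exact because conflations in $\tilde{\mathcal{E}}$ are componentwise, and the shift $S$ is an exact autoequivalence because $T$ is exact on $\mathcal{A}$. Consequently both ${\rm pr}^1$ and ${\rm pr}^1 S$ are exact, so their right adjoints $\theta^0$ and $\theta^1$ preserve injectives. Since $\mathcal{A}$ is Frobenius, every projective object of $\mathcal{A}$ is injective, hence $\theta^0(P)$ and $\theta^1(Q)$ are injective in $\mathbf{F}(\mathcal{A};\omega)$ whenever $P, Q$ are projective.

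For (b), given any $\omega$-factorization $X$, I would first assemble the canonical inflation
\[
X \longrightarrow \theta^0(X^1)\oplus \theta^1(T(X^0))
\]
out of the units of the adjunctions $({\rm pr}^1,\theta^0)$ and $({\rm pr}^1 S, \theta^1)$. By (\ref{iso:theta01}) and (\ref{iso:theta11}), these units have degree-$0$ component $(d_X^0,\varepsilon_{X^0}^{-1})\colon X^0\to X^1\oplus T^{-1}T(X^0)$ and degree-$1$ component $({\rm Id}_{X^1},d_X^1)\colon X^1\to X^1\oplus T(X^0)$; each is a split monomorphism with retraction $\varepsilon_{X^0}\circ\mathrm{pr}_2$ and $\mathrm{pr}_1$ respectively, hence an inflation in $\mathcal{A}$. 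I would then post-compose with $\theta^0(b)\oplus \theta^1(T(a))$, where $a\colon X^0\to P$ and $b\colon X^1\to Q$ are inflations into projective-injective objects of $\mathcal{A}$; the exactness of $\theta^0, \theta^1$ and $T$ ensures this is still an inflation. The result is an inflation $X\to \theta^0(Q)\oplus \theta^1(T(P))$ into an object that is projective by Lemma~\ref{lem:proj} (noting $T(P)$ remains projective) and injective by (a). This proves enough injectives; moreover, any injective $X$ must split off from this inflation, yielding that injectives are projective.

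The main obstacle is essentially bookkeeping: one has to verify that the two degree components really assemble into a morphism of factorizations, which boils down to the identities (\ref{equ:omega1}) and (\ref{equ:omega11}) for $\omega^{(-1)}$, and to confirm that the target in each degree is a split conflation so the combined map is a conflation in $\tilde{\mathcal{E}}$. Beyond the adjunctions of Lemma~\ref{lem:3adj}, the description of projectives in Lemma~\ref{lem:proj}, and the exactness of $T$, no new input is required.
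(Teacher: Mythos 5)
Your proposal is correct and follows essentially the same route as the paper: enough projectives and their classification come from Lemma~\ref{lem:proj}, and enough injectives with the identical classification follow by the dual argument using the adjunctions $({\rm pr}^1,\theta^0)$ from (\ref{iso:theta01}) and $({\rm pr}^1 S,\theta^1)$ from (\ref{iso:theta11}), with the exactness of $T$ entering precisely to make $S$ (and hence ${\rm pr}^1 S$) exact. The paper compresses all of this into the single word ``dually,'' while you have written out the explicit inflation $X\to\theta^0(X^1)\oplus\theta^1(T(X^0))$ and verified its componentwise split-mono structure, which is exactly the intended content.
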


\begin{proof}
    By Lemma~\ref{lem:proj}, the exact category $\mathbf{F}(\mathcal{A}; \omega)$ has enough projective objects, and projective objects are precisely direct summands of  $\theta^0(P)\oplus \theta^1(Q)$ for some projective-injective objects $P$ and $Q$ in $\mathcal{A}$. Dually, $\mathbf{F}(\mathcal{A}; \omega)$ has enough injective objects, and injective objects are precisely direct summands of  $\theta^0(P)\oplus \theta^1(Q)$ for some projective-injective objects $P$ and $Q$ in $\mathcal{A}$; here, we need to use the adjunctions (\ref{iso:theta01}) and (\ref{iso:theta11}). When applying (\ref{iso:theta11}), we use the assumption that $T$ is exact. Then we conclude that  the exact category $\mathbf{F}(\mathcal{A}; \omega)$ is Frobenius.
\end{proof}

Denote by $\mathbf{F}^0(\mathcal{A}; \omega)$ the full subcategory of $\mathbf{F}(\mathcal{A}; \omega)$ formed by $\omega$-factorizations $X=(X^0, X^1; d_X^0, d_X^1)$ whose first components $X^1$ are projective in $\mathcal{A}$. By the proof above, it is also a Frobenius exact category with the same projecitve-injective objects as $\mathbf{F}(\mathcal{A}; \omega)$.  Denote by $ \underline{\mathbf{F}}(\mathcal{A}; \omega)$ and  $\underline{\mathbf{F}}^0(\mathcal{A}; \omega)$ their stable categories.

The following result is analogous to \cite[Proposition~6.3]{Chen24}.

\begin{prop}
Let $\mathcal{A}$ be a Frobenius category.   Then there is a recollement.
    \[\xymatrix{
 \underline{\mathbf{F}}^0(\mathcal{A}; \omega)\;\ar[rr]|-{\rm inc}&&\;  \underline{\mathbf{F}}(\mathcal{A}; \omega) \; \ar[rr]|-{{\rm pr}^1}
  \ar@/^1.5pc/[ll]|{{\rm inc}_\rho}\ar@/_1.5pc/[ll]|{{\rm inc}_\lambda}&&
  \;\underline{\mathcal{A}} \ar@/^1.5pc/[ll]|{\theta^0}\ar@/_1.5pc/[ll]|{\theta^1}
}\]
    Here, ``${\rm inc}$" denotes the inclusion functor.
\end{prop}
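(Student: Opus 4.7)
The plan is to derive the recollement by applying Lemma~\ref{lem:rec} to the triple $\pi = {\rm pr}^1 \colon \mathbf{F}(\mathcal{A}; \omega) \to \mathcal{A}$, with left adjoint $\pi_\lambda = \theta^1$ and right adjoint $\pi_\rho = \theta^0$. The two required adjoint pairs $(\theta^1, {\rm pr}^1)$ and $({\rm pr}^1, \theta^0)$ are already furnished by Lemma~\ref{lem:3adj}(2), so the task reduces to verifying the remaining hypotheses of the recollement lemma.

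First I would check that ${\rm pr}^1$, $\theta^0$, and $\theta^1$ are each exact and send projective objects to projective objects. Exactness of ${\rm pr}^1$ follows immediately from the componentwise definition of $\tilde{\mathcal{E}}$ given in Lemma~\ref{lem:F-ex}. For $\theta^0(C) = (C, C; {\rm Id}_C, \omega_C)$ exactness in $C$ is likewise componentwise, while for $\theta^1(C) = (T^{-1}(C), C; \omega_C^{(-1)}, \eta_C)$ it relies on the fact that $T^{-1}$, as a quasi-inverse to the exact autoequivalence $T$, is itself exact. That $\theta^0$ and $\theta^1$ carry projectives to projectives is one half of Lemma~\ref{lem:proj}. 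Conversely, the same lemma describes each projective in $\mathbf{F}(\mathcal{A}; \omega)$ as a direct summand of $\theta^0(P) \oplus \theta^1(Q)$ with $P, Q$ projective in $\mathcal{A}$, and applying ${\rm pr}^1$ returns a projective summand of $P \oplus Q$.

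Next I would argue that $\theta^0$ and $\theta^1$ are fully faithful both as ordinary functors and after passage to stable categories. A direct reading of the formulas gives ${\rm pr}^1 \theta^0 = {\rm Id}_\mathcal{A} = {\rm pr}^1 \theta^1$, so that the counit of $({\rm pr}^1, \theta^0)$ and the unit of $(\theta^1, {\rm pr}^1)$ are literally identity transformations, which settles fully faithfulness on ordinary categories. Since all functors in play are exact and preserve projectives, the two adjunctions descend to the stable categories, and the induced unit and counit remain identities on $\underline{\mathcal{A}}$, yielding fully faithfulness at the stable level. Finally, by definition, the essential kernel appearing in Lemma~\ref{lem:rec}, namely $\mathcal{K} = \{X \in \mathbf{F}(\mathcal{A}; \omega) \mid {\rm pr}^1(X) \text{ is projective in } \mathcal{A}\}$, coincides with $\mathbf{F}^0(\mathcal{A}; \omega)$, which is already known to be Frobenius from the paragraph preceding the statement. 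Lemma~\ref{lem:rec} then produces the claimed recollement on the nose.

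I do not anticipate any genuine obstacle; the argument is essentially a matter of aligning previously established facts. The only mild point deserving care is the exactness of $\theta^1$, where one must invoke exactness of $T^{-1}$ in addition to the stated exactness of $T$ --- a routine consequence of $T$ being an equivalence.
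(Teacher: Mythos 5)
Your proposal is correct and follows essentially the same approach as the paper: both reduce the claim to an application of Lemma~\ref{lem:rec} with $\pi = {\rm pr}^1$, $\pi_\lambda = \theta^1$, $\pi_\rho = \theta^0$, using the adjunctions from Lemma~\ref{lem:3adj}(2) and the identification of $\underline{\mathbf{F}}^0(\mathcal{A};\omega)$ as the essential kernel. You have merely made explicit a few routine verifications (exactness and preservation of projectives by ${\rm pr}^1$, $\theta^0$, $\theta^1$, and fully faithfulness of $\theta^s$ via ${\rm pr}^1\theta^s = {\rm Id}$) that the paper compresses into a single sentence.
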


\begin{proof}
    By Lemma~\ref{lem:3adj}(2), we have the adjoint pairs $(\theta^1, {\rm pr}^1)$ and $({\rm pr}^1, \theta^0)$ between the unstable categories. Moreover, both $\theta^s$ are fully faithful. The essential kernel of ${\rm pr}^1\colon  \underline{\mathbf{F}}(\mathcal{A}; \omega)\rightarrow \underline{\mathcal{A}}$ is $ \underline{\mathbf{F}}^0(\mathcal{A}; \omega)$.  Then the required recollement follows immediately from the one in Lemma~\ref{lem:rec}.
\end{proof}

\begin{rem}
    Let $X=(X^0, X^1; d_X^0, d_X^1)$ be an $\omega$-factorization. By Lemma~\ref{lem:rec}, ${\rm inc}_\lambda (X)$ is isomorphic to the cone of $((\varepsilon_{X^0})^{-1}\circ T^{-1}(d_X^1), \; {\rm Id}_{X^1})\colon \theta^1(X^1)\rightarrow X$. To compute the cone, we take a deflation $x\colon P\rightarrow X^0$ with $P$ projective, which induces a morphism $(x, d_X^1\circ x)\colon \theta^0(P)\rightarrow X$. Then $\Sigma^{-1}({\rm inc}_\lambda (X))$ is isomorphic to the kernel of the following induced deflation
    $$\theta^1(X^1)\oplus \theta^0(P)\longrightarrow X.$$
    Since the kernel is computed componentwise, its zeroth component is the kernel of $T^{-1}(X^1)\oplus P\rightarrow X^0$ and its first component is $P$. In particular, it belongs to $\mathbf{F}^0(\mathcal{A}; \omega)$. Dually, $\Sigma ({\rm inc}_\rho(X))$ is isomorphic to the cokernel of the following inflation
    $$X\longrightarrow \theta^0(X^1)\oplus S\theta^0(Q),$$
    where we take an inflation $X^0\rightarrow Q$ with $Q$ projective.
\end{rem}

The following notion is essentially due to \cite[Definition~4.1]{Chen24}.

\begin{defn}\label{defn:p-null1}
A morphism $f=(f^0, f^1)\colon X\rightarrow Y$ between two $\omega$-factorizations is called \emph{p-null-homotopical} provided that there exist two morphisms $h^0\colon X^0 \rightarrow T^{-1}(Y^1)$ and $h^1\colon X^1\rightarrow Y^0$, which factor through projective objects in $\mathcal{A}$ and satisfy the following identities:
$$f^0=h^1\circ d_X^0+ \varepsilon_{Y^0}\circ T^{-1}(d_Y^1)\circ h^0 \mbox{ and } f^1=d_Y^0\circ h^1+(\eta_{Y^1})^{-1}\circ T(h^0)\circ d_X^1.$$
Here, we use the adjunction quadruple (\ref{quad:1}).
\end{defn}

The following is analogous to \cite[Lemma~4.2]{Chen24}.

\begin{lem}\label{lem:p-null}
Let  $f\colon X\rightarrow Y$ be a morphism between two $\omega$-factorizations. Then $f$ is p-null-homotopical if and only if it factors through an object of the form $\theta^0(P)\oplus \theta^1(Q)$ for projective objects $P$ and $Q$ in $\mathcal{A}$.
\end{lem}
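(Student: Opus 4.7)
The plan is to use the four adjunction isomorphisms in Lemma~\ref{lem:3adj} to translate between a factorization of $f$ through $\theta^0(P)\oplus\theta^1(Q)$ and the homotopy data $(h^0,h^1)$ of Definition~\ref{defn:p-null1}. Both directions of the equivalence are computational: the entire content is the dictionary between morphisms into or out of $\theta^s(-)$ and morphisms in $\mathcal{A}$, as supplied by the explicit formulas~\eqref{iso:theta0}, \eqref{iso:theta1}, \eqref{iso:theta01} and \eqref{iso:theta11}.

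For the ``if'' direction, suppose $f=\beta\circ\alpha$ with $\alpha=(\alpha^a,\alpha^b)\colon X\to\theta^0(P)\oplus\theta^1(Q)$ and $\beta=(\beta^a,\beta^b)\colon \theta^0(P)\oplus\theta^1(Q)\to Y$. The four components correspond, via those adjunctions, to morphisms $a\colon X^1\to P$, $b\colon P\to Y^0$, $c\colon T(X^0)\to Q$ and $e\colon Q\to Y^1$ in $\mathcal{A}$. Substituting the explicit formulas gives
\begin{align*}
\beta^a\circ\alpha^a &= (b\circ a\circ d_X^0,\; d_Y^0\circ b\circ a),\\
\beta^b\circ\alpha^b &= \bigl(\varepsilon_{Y^0}\circ T^{-1}(d_Y^1)\circ T^{-1}(e\circ c)\circ\varepsilon_{X^0}^{-1},\; e\circ c\circ d_X^1\bigr).
\end{align*}
Setting $h^1=b\circ a\colon X^1\to Y^0$ and $h^0=T^{-1}(e\circ c)\circ\varepsilon_{X^0}^{-1}\colon X^0\to T^{-1}(Y^1)$, one has $h^1$ factoring through the projective $P$ and $h^0$ factoring through the projective $T^{-1}(Q)$, using that $T^{-1}$ is an exact autoequivalence and so preserves projectivity. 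To recover the formula for $f^1$ one rewrites $e\circ c=(\eta_{Y^1})^{-1}\circ T(h^0)$; this is an immediate consequence of the triangle identity $T(\varepsilon_{X^0})=\eta_{T(X^0)}^{-1}$ combined with the naturality of $\eta$. Summing the two displayed lines then matches the two identities in Definition~\ref{defn:p-null1} exactly.

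Conversely, given $(h^0,h^1)$ as in Definition~\ref{defn:p-null1}, factor $h^1=b\circ a$ through a projective $P$, and factor $h^0$ through some projective $P''$ of $\mathcal{A}$; set $Q=T(P'')$, which is projective because $T$ is an exact autoequivalence. Choose $c\colon T(X^0)\to Q$ and $e\colon Q\to Y^1$ so that $e\circ c=(\eta_{Y^1})^{-1}\circ T(h^0)$: this is just the factorization of $h^0$ pushed across the adjunction $(T^{-1},T)$. Assembling $a,b,c,e$ back via Lemma~\ref{lem:3adj} produces $\alpha$ and $\beta$, and by the first half of the argument run in reverse, the null-homotopy identities for $(h^0,h^1)$ are exactly what is required to conclude $f=\beta\circ\alpha$. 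The only subtle step, and the one I would carry out most carefully, is this translation of $h^0\colon X^0\to T^{-1}(Y^1)$ into $e\circ c\colon T(X^0)\to Y^1$ under the adjunction $(T^{-1},T)$; once this natural bijection is in place, the rest is a direct matching of formulas, with no further input needed beyond Lemma~\ref{lem:3adj}.
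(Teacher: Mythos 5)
Your proof is correct and takes essentially the same approach as the paper's: both use the explicit adjunction isomorphisms~\eqref{iso:theta0}, \eqref{iso:theta1}, \eqref{iso:theta01}, \eqref{iso:theta11} from Lemma~\ref{lem:3adj} to translate between the homotopy data $(h^0,h^1)$ of Definition~\ref{defn:p-null1} and the four component morphisms of a factorization through $\theta^0(P)\oplus\theta^1(Q)$, with the identity $(\eta_{Y^1})^{-1}\circ T(h^0)=e\circ c$ (a triangle identity plus naturality of $\eta$) doing the only non-mechanical work. The only presentational difference is that the paper writes down the two composite morphisms $X\to\theta^1(Q)\to Y$ and $X\to\theta^0(P)\to Y$ directly and verifies their sum is $f$, whereas you unpack each of the four adjunction bijections separately; the underlying computation is identical.
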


Consequently, the morphisms in $ \underline{\mathbf{F}}(\mathcal{A}; \omega)$ are equivalent classes of morphisms in $\mathbf{F}(\mathcal{A}; \omega)$ modulo p-null-homotopical morphisms.

\begin{proof}
For the ``only if" part, we assume that $f$ is p-null-homotopical. We assume that $h^0$ and $h^1$ factor as follows:
$$h^0\colon X^0\xrightarrow{a} T^{-1}(Q)\xrightarrow{T^{-1}(b)} T^{-1}(Y^1) \mbox{ and } h^1\colon X^1\xrightarrow{x} P \xrightarrow{y} Y^0.$$
Then we have two composite morphisms of $\omega$-factorizations:
$$X\xrightarrow{(a, \; (\eta_Q)^{-1}\circ T(a)\circ d_X^1)} \theta^1(Q) \xrightarrow{(\varepsilon_{Y^0}\circ T^{-1}(d_Y^1\circ b), \; b)} Y \mbox{ and } X \xrightarrow{(x\circ d_X^0, x)} \theta^0(P) \xrightarrow{(y, d_Y^0\circ y)} Y.$$
Their sum yields $f$, which proves that $f$ factors through $\theta^0(P)\oplus \theta^1(Q)$. Here, we implicitly uses the adjunctions in Lemma~\ref{lem:3adj}. For the ``if" part, we just reverse the argument.
\end{proof}

\section{Module factorizaions and idempotents}\label{sec:modf}

In this section, we study factorizations in module categories, with an emphasis on factorizations with Gorenstein projective components. We will see that GP-compatible idempotents play a role.

Let $A$ be an arbitrary  ring.  Fix an element $\omega\in A$ and an automorphism $\sigma\colon A\rightarrow A$ such that  $\omega a=\sigma(a)\omega$ for each $a\in A$ and that $\sigma(\omega)=\omega$. In particular, the element $\omega$ is \emph{normal} in $A$, that is, $A\omega=\omega A$.

We define the \emph{twisted  matrix ring} $M_2(A; \omega, \sigma)$ as follows: as an abelian group, it is same as $M_2(A)$; its multiplication is given by
$$\begin{pmatrix}
    a_{11} & a_{12}\\
    a_{21} & a_{22}
\end{pmatrix} \begin{pmatrix}
    b_{11} & b_{12}\\
    b_{21} & b_{22}
\end{pmatrix}=\begin{pmatrix}
    a_{11}b_{11}+a_{12}b_{21}\omega & a_{11} b_{12}+a_{12}b_{22}\\
    a_{21}\sigma(b_{11})+a_{22}b_{21} & a_{21}\omega
b_{12}+a_{22}b_{22}\end{pmatrix}.$$

We are most interested in the following situation.

\begin{rem}
    Assume that $\omega\in A$ is \emph{regular} and normal. The regularity condition means that for nonzero $a\in A$, both the products $a \omega$ and $\omega a$ are nonzero.  It follows that   there is a unique automorphism $\sigma$ on $A$ satisfying $\omega a=\sigma(a)\omega$ for each $a\in A$. Moreover, we have $\sigma(\omega)=\omega$. In this situation, the twisted matrix ring $M_2(A; \omega, \sigma)$ is isomorphic to the  subring $\begin{pmatrix}
       A & A\\
       A\omega & A
    \end{pmatrix}$ of the usual matrix ring $M_2(A)$; see \cite[Section~3]{Chen24}.
\end{rem}

For each $A$-module $M$, the twisted $A$-module ${^\sigma(M)}$ is defined as follows. As an abelian group, we have ${^\sigma(M)}=M$, whose typical element is written as ${^\sigma(m)}$. The left $A$-action is defined such that
$$a \; {^\sigma(m)}={^\sigma(\sigma(a)m)}$$
for any $a\in A$. This gives rise to the \emph{twisting endofunctor} ${^\sigma(-)}$ on $A\mbox{-Mod}$, which is  an autoequivalence. Its quasi-inverse is given by the twisting endofunctor ${^{(\sigma^{-1})}(-)}$ with respect to $\sigma^{-1}$.

The element $\omega$ induces a natural transformation
$$\omega\colon {\rm Id}_{A\mbox{-}{\rm Mod}} \longrightarrow {^\sigma(-)}$$
defined such that $\omega_M(m)={^\sigma(\omega m)}$ for any $A$-module $M$ and $m\in M$. Here, we abuse the notation.
We consider the category $\mathbf{F}(A\mbox{-Mod}; \omega)$ of $\omega$-factorizations. These $\omega$-factorizations are called \emph{module factorizations} in \cite{Chen24}; in the commutative case, they are called linear factorizations in \cite{DM}; see also \cite{BT}. We mention the work \cite{SZ} on $n$-fold  module factorizations.

To simplify the notation, we set
$$\mathbf{F}(A; \omega)=\mathbf{F}(A\mbox{-Mod}; \omega), \; \mathbf{GF}(A; \omega)=\mathbf{F}(A\mbox{-GProj}; \omega) \mbox{ and } \mathbf{MF}(A; \omega)=\mathbf{F}(A\mbox{-proj}; \omega).$$
Furthermore, we set
$$\mathbf{GF}^0(A; \omega)=\mathbf{F}^0(A\mbox{-GProj}; \omega).$$
 The objects in $\mathbf{MF}(A; \omega)$ are called   \emph{matrix factorizations}  in  \cite{CCKM}; see also \cite{MU}. The objects in $\mathbf{GF}^0(A; \omega)$ are analogous to the generalized matrix factorizations in \cite[Section~9]{EP}. By Proposition~\ref{prop:Frobenius}, all of $\mathbf{GF}(A; \omega)$, $\mathbf{GF}^0(A; \omega)$ and $\mathbf{MF}(A; \omega)$ are Frobenius exact.

 Consider the cartesian product category $A\mbox{-Mod}\times A\mbox{-Mod}$. Then we have the underlying functor
$${\rm pr}=({\rm pr}^0, {\rm pr}^1)\colon \mathbf{F}(A; \omega)\longrightarrow A\mbox{-Mod}\times A\mbox{-Mod},$$
which sends an $\omega$-factorization $X=(X^0, X^1; d_X^0, d_X^1)$ to the underlying pair $(X^0, X^1)$ of $A$-modules.

The following results slightly strengthen the ones in \cite[Section~3]{Chen24}.

\begin{prop}\label{prop:M-2}
    Keep the assumptions above. Then there is an equivalence of categories
    $$\mathbf{F}(A; \omega)\simeq M_2(A; \omega, \sigma)\mbox{-}{\rm Mod},$$
    which restricts to two equivalences of exact categories:
   $$\mathbf{GF}(A; \omega)\simeq M_2(A; \omega, \sigma)\mbox{-}{\rm GProj} \mbox{ and } \mathbf{GF}^0(A; \omega)\simeq M_2(A; \omega, \sigma)\mbox{-}{\rm GProj}^{e_{11}}.$$
   Moreover, the idempotent $e_{11}=\begin{pmatrix}
       1 & 0\\
       0 & 0
   \end{pmatrix}$ of $M_2(A; \omega, \sigma)$ is GP-compatible.
\end{prop}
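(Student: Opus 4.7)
The plan is to construct the abelian-category equivalence first, restrict it to the Gorenstein projective subcategories, and then read off GP-compatibility essentially for free.

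First, I would make the equivalence $\Phi\colon \mathbf{F}(A;\omega)\to M_2(A;\omega,\sigma)\mbox{-Mod}$ explicit by sending $X=(X^0,X^1;d_X^0,d_X^1)$ to the abelian group $X^1\oplus X^0$ equipped with the $M_2(A;\omega,\sigma)$-action in which $e_{11}$ and $e_{22}$ project onto $X^1$ and $X^0$, respectively, the element $e_{12}$ acts by $d_X^0$, the element $e_{21}$ acts by $d_X^1$ (after identifying the underlying abelian group of ${^\sigma(X^0)}$ with $X^0$), and $A$ embeds diagonally via $a\mapsto ae_{11}+ae_{22}$. The defining identities $d_X^1\circ d_X^0=\omega_{X^0}$ and $T(d_X^0)\circ d_X^1=\omega_{X^1}$ translate exactly to the twisted relations $e_{12}e_{21}=\omega e_{11}$ and $e_{21}e_{12}=\omega e_{22}$, while the $A$-linearity of $d_X^1\colon X^1\to{^\sigma(X^0)}$ translates to $e_{21}\cdot ae_{11}=\sigma(a)e_{21}$. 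This is essentially \cite[Section~3]{Chen24}, so I would simply recall it.

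Next, I would restrict $\Phi$ to the Gorenstein projective subcategories. Since $\Phi$ is an equivalence of abelian categories, it is exact. By Lemma~\ref{lem:proj}, the projective objects of $\mathbf{F}(A;\omega)$ are direct summands of $\theta^0(P)\oplus \theta^1(Q)$ for projective $A$-modules $P,Q$; a direct check shows these correspond under $\Phi$ to direct summands of copies of the two indecomposable projective left $M_2(A;\omega,\sigma)$-modules $M_2(A;\omega,\sigma)\cdot e_{22}$ and $M_2(A;\omega,\sigma)\cdot e_{11}$. Since Gorenstein projectives are exactly the first cocycles of totally acyclic complexes of projectives, and $\Phi$ is an exact equivalence preserving projectives in both directions, the equivalence $\mathbf{GF}(A;\omega)\simeq M_2(A;\omega,\sigma)\mbox{-GProj}$ reduces to the intrinsic statement that an object $X\in\mathbf{F}(A;\omega)$ is Gorenstein projective in this abelian category if and only if both components $X^0,X^1$ are Gorenstein projective as $A$-modules. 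The forward direction applies the exact functors $\mathrm{pr}^s$ and translates $\mathrm{Hom}$-acyclicity from $\mathbf{F}(A;\omega)$ to $A\mbox{-Mod}$ via the adjoint identities of Lemma~\ref{lem:3adj}. The converse direction, which I expect to be the main obstacle, requires a horseshoe-style construction: lifting deflations $P\twoheadrightarrow X^0$ and $Q\twoheadrightarrow X^1$ in $A\mbox{-Mod}$ to deflations $\theta^0(P)\oplus\theta^1(Q)\twoheadrightarrow X$ in $\mathbf{F}(A;\omega)$ iteratively, dually splicing with projective coresolutions, and then verifying total acyclicity again through Lemma~\ref{lem:3adj}. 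The second restricted equivalence is then a bookkeeping matter: our conventions match the condition "$X^1$ projective as $A$-module" defining $\mathbf{F}^0$ with the condition "$e_{11}M$ projective as $A$-module" defining $M_2(A;\omega,\sigma)\mbox{-GProj}^{e_{11}}$, yielding $\mathbf{GF}^0(A;\omega)\simeq M_2(A;\omega,\sigma)\mbox{-GProj}^{e_{11}}$.

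Finally, the GP-compatibility of $e_{11}$ falls out almost immediately. I would identify $e_{11}M_2(A;\omega,\sigma)e_{11}\cong A$ via $ae_{11}\mapsto a$; the left $A$-module $e_{11}M_2(A;\omega,\sigma)$ decomposes as $Ae_{11}\oplus Ae_{12}$ and is therefore free of rank $2$, hence projective. For any Gorenstein projective $M_2(A;\omega,\sigma)$-module $G$, the equivalence above writes $G\cong\Phi(X)$ with $X\in\mathbf{GF}(A;\omega)$, and then $e_{11}G=X^1$ is Gorenstein projective as an $A$-module by the very definition of $\mathbf{GF}(A;\omega)$.
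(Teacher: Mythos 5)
Your construction of $\Phi$ and the verification of the twisted-matrix relations match the paper's proof, and the final GP-compatibility argument (identify $e_{11}\Gamma e_{11}\cong A$, note $e_{11}\Gamma\cong Ae_{11}\oplus Ae_{12}$ is $A$-free, and use $e_{11}\Phi(X)=X^1$) is exactly what the paper does. Where you genuinely diverge is in the middle step, establishing that Gorenstein projectivity of a $\Gamma$-module $N$ is equivalent to Gorenstein projectivity of the two $A$-modules $e_{11}N$ and $(1-e_{11})N$. The paper dispatches this in one line by observing that the underlying functor $\mathrm{pr}=(\mathrm{pr}^0,\mathrm{pr}^1)$ is a Frobenius functor (Lemma~\ref{lem:3adj}), so that $\mathrm{pr}\circ\Psi\colon\Gamma\text{-Mod}\to A\text{-Mod}\times A\text{-Mod}$ is a faithful exact Frobenius functor, and then citing \cite[Theorem~3.2]{Chen-Ren}, which says precisely that such functors detect and reflect Gorenstein projectivity. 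You instead propose to prove the equivalence from scratch: the ``only if'' direction by pushing a totally acyclic complex through $\mathrm{pr}^s$ and translating $\mathrm{Hom}$-acyclicity via the adjunctions of Lemma~\ref{lem:3adj}, and the ``if'' direction by a horseshoe-style splicing of complete resolutions of the components. Your sketch of the forward direction is complete and correct (the right adjoints $S\theta^0$ and $\theta^0$ preserve projectives, which is what makes the adjunction argument close). The converse is where you correctly flag the obstacle: one has to build not merely a projective resolution but a two-sided totally acyclic complex in $\mathbf{F}(A;\omega)$, and the splicing to the right needs the cokernel of the constructed inflation $X\hookrightarrow\theta^0(X^1)\oplus S\theta^0(Q)$ to again have Gorenstein projective components and to iterate; this is exactly the work that \cite[Theorem~3.2]{Chen-Ren} packages up. So both routes are valid; yours is more self-contained but would require expanding the horseshoe step into an honest double induction, whereas the paper buys brevity (and cleaner reuse in the inductive step of Theorem~\ref{thm:general}) at the cost of leaning on the Frobenius-functor machinery.
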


\begin{proof}
    Write $\Gamma=M_2(A; \omega, \sigma)$. We identify $A$ with $e_{11}\Gamma e_{11}$ and $(1_\Gamma -e_{11})\Gamma (1_\Gamma -e_{11})$. Let $X=(X^0, X^1; d_X^0, d_X^1)$ be any $\omega$-factorization. In particular, both $X^i$ are $A$-modules. We define a $\Gamma$-module $\Phi(X)=\begin{pmatrix}
        X^1\\ X^0
    \end{pmatrix}$ as follows: its typical element is written as a column vector $\begin{pmatrix}
        x^1\\ x^0
    \end{pmatrix}$; the left $\Gamma$-action is given by
    $$\begin{pmatrix}
        a_{11} & a_{12}\\
        a_{21} & a_{22}
    \end{pmatrix} \begin{pmatrix}
        x^1\\ x^0
    \end{pmatrix}=\begin{pmatrix}
        a_{11}x^1+a_{12} d_X^0(x^0)\\
        a_{21} \delta_{X^0}(d_X^1(x^1))+a_{22}x^0
    \end{pmatrix}. $$
    Here, $\delta_{X^0}\colon {^\sigma{(X^0)}}\rightarrow X^0$ sends $^\sigma{(x^0)}$  to $x^0$, which  is an isomorphism of abelian groups.  This gives rise to a functor $\Phi\colon \mathbf{F}(A; \omega)\rightarrow \Gamma\mbox{-Mod}$.

    Conversely, let $N$ be a $\Gamma$-module. We have a $\omega$-factorization
    $$\Psi(N)=((1_\Gamma -e_{11})N, e_{11}N; d^0, d^1),$$
    where $d^0\colon (1_\Gamma -e_{11})N\rightarrow e_{11}N$ sends $x$ to $\begin{pmatrix}
        0& 1 \\0 & 0
    \end{pmatrix}x$, and $d^1\colon e_{11}N\rightarrow {^\sigma((1_\Gamma -e_{11})N)}$ sends $y$ to $^\sigma(\begin{pmatrix}
        0 & 0\\ 1 & 0
    \end{pmatrix}y)$. This yields another functor $\Psi\colon \Gamma\mbox{-Mod}\rightarrow  \mathbf{F}(A; \omega)$. It is routine to verify that these two functors are quasi-inverse to each other.

    By Lemma~\ref{lem:3adj} the underlying functor ${\rm pr}$ is a Frobenius functor \cite{Mor65}. We apply \cite[Theorem~3.2]{Chen-Ren} to the composite functor ${\rm pr}\circ \Psi$, which is a faithful Frobenius functor. It follows that a $\Gamma$-module $N$ is Gorenstein projective if and only if both the $A$-modules $(1_\Gamma-e_{11})N$ and $e_{11}N$ are Gorenstein projective. Then we have the two restricted equivalences. Moreover, since $X^1=e_{11}\Phi(X)$ for any $\omega$-factorization $X$, we infer that the idempotent $e_{11}$ is GP-compatible.
\end{proof}

Consider the quotient ring $\bar{A}=A/(\omega)$. Since $\omega$ is normal, we have $(\omega)=A\omega =\omega A$. For each $\omega$-factorization $X$,  its zeroth cokernel ${\rm Cok}^0(X)$ is defined to be the cokernel of $d_X^0\colon X^0\rightarrow X^1$. Since $\omega$ vanishes on ${\rm Cok}^0(X)$, it becomes an $\bar{A}$-module. This gives rise to the (zeroth) \emph{cokernel functor}
$${\rm Cok}^0\colon \mathbf{F}(A; \omega)\longrightarrow \bar{A}\mbox{-Mod}.$$

The following result is due to \cite[Theorem~5.6]{Chen24}, which motivates the study of the category $\mathbf{GF}^0(A; \omega)$.

\begin{prop}\label{prop:Chen24}
    Assume that the element $\omega$ is regular and normal. Then the cokernel functor induces a triangle equivalence
$${\rm Cok}^0\colon \underline{\mathbf{GF}}^0(A; \omega)\stackrel{\sim}{\longrightarrow} \bar{A}\mbox{-}\underline{\rm GProj}.$$
\end{prop}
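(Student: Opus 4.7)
The plan is to establish the equivalence in four stages: well-definedness of ${\rm Cok}^0$ on objects, exactness and descent to stable categories as a triangle functor, construction of a quasi-inverse, and verification of the unit/counit isomorphisms.

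First I would check well-definedness. For $X\in \mathbf{GF}^0(A;\omega)$, regularity of $\omega$ in $A$ propagates to regularity of $\omega$ on every Gorenstein projective $A$-module, via any totally acyclic resolution by projectives (on which $\omega$ acts regularly by normality). The relation $d_X^1\circ d_X^0=\omega_{X^0}$ therefore forces $d_X^0\colon X^0\to X^1$ to be injective, and we obtain a short exact sequence
$$0\longrightarrow X^0\stackrel{d_X^0}\longrightarrow X^1\longrightarrow {\rm Cok}^0(X)\longrightarrow 0$$
of $A$-modules with $X^1$ projective. A standard change-of-rings argument, using that $\omega$ is regular and normal, yields that ${\rm Cok}^0(X)$ is Gorenstein projective as an $\bar{A}$-module.

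For descent and exactness, I would compute ${\rm Cok}^0$ on the trivial factorizations: ${\rm Cok}^0(\theta^0(P))=0$ and ${\rm Cok}^0(\theta^1(Q))\cong Q/\omega Q$, the latter being a projective $\bar{A}$-module by normality of $\omega$. Combined with Lemma~\ref{lem:proj} and Lemma~\ref{lem:p-null}, this shows that every p-null-homotopical morphism is annihilated in $\bar{A}\mbox{-}\underline{\rm GProj}$. Exactness of ${\rm Cok}^0$ as a functor of Frobenius exact categories follows from the snake lemma applied to a componentwise conflation in $\mathbf{GF}^0(A;\omega)$: by the injectivity observation above, all three vertical maps $d^0$ are injective, so the resulting sequence of cokernels is short exact. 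Hence ${\rm Cok}^0$ induces an exact functor of Frobenius categories that descends to a triangle functor on the stable categories.

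The hard part will be constructing a quasi-inverse $\Omega\colon \bar{A}\mbox{-}{\rm GProj}\to \mathbf{GF}^0(A;\omega)$. Given $G\in \bar{A}\mbox{-}{\rm GProj}$, choose a totally acyclic complex $\bar{\mathbf{P}}^\bullet$ of projective $\bar{A}$-modules with a cocycle isomorphic to $G$. Lift each $\bar{P}^i$ to a projective $A$-module $P^i$ and lift the differentials to $A$-linear maps. The relation that $\omega$ vanishes in $\bar{A}$ is witnessed at the level of $A$ by a null-homotopy, and this null-homotopy supplies the ``backward'' differential $d^1$ of an $\omega$-factorization. Set $\Omega(G)^1=P^1$ and $\Omega(G)^0$ an appropriate syzygy extracted from the lifted complex. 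The main obstacle is verifying that $\Omega(G)^0$ is Gorenstein projective as an $A$-module; for this, one assembles the two-sided iteration of the construction into a totally acyclic $A$-complex whose relevant cocycle is $\Omega(G)^0$. This extends the classical Eisenbud construction for hypersurfaces to the Gorenstein projective setting, and the need for a totally acyclic (not merely projective) resolution of $G$ is what makes the step delicate. Once $\Omega$ is in hand, the counit ${\rm Cok}^0\circ \Omega\cong {\rm Id}$ is built into the construction, while the unit $X\cong \Omega({\rm Cok}^0(X))$ in $\underline{\mathbf{GF}}^0(A;\omega)$ follows by lifting the identity on ${\rm Cok}^0(X)$ to a morphism of factorizations and observing that any two such lifts differ by a p-null-homotopical correction, which is zero in the stable category.
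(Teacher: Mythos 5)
The paper does not prove this statement; it cites it verbatim from \cite[Theorem~5.6]{Chen24}, and records in Remark~\ref{rem:dense} that density of ${\rm Cok}^0$ is established in that cited proof. So there is no in-paper argument to compare against, and your outline has to stand on its own. Your first two stages are fine: $\omega$ acts as a non-zero-divisor on every Gorenstein projective $A$-module, so $d_X^0$ is injective and you get the short exact sequence you describe; ${\rm Cok}^0(\theta^0(P))=0$ and ${\rm Cok}^0(\theta^1(Q))\cong Q/\omega Q$ is a projective $\bar{A}$-module, so projective objects (hence, via Lemma~\ref{lem:p-null}, p-null-homotopical morphisms) are killed; and componentwise injectivity of $d^0$ makes the snake-lemma argument for exactness work.

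The quasi-inverse stage has a genuine gap, and also an avoidable structural problem. The structural problem: when you lift a totally acyclic $\bar{A}$-complex $\bar{\mathbf{P}}^\bullet$ to $A$, the lifted maps only square to zero modulo $(\omega)$, so you do not obtain a complex of $A$-modules; the phrase ``$\Omega(G)^0$ an appropriate syzygy extracted from the lifted complex'' therefore does not parse as written. (Also, projective $\bar{A}$-modules need not lift to projective $A$-modules in general; one should use free covers to avoid this.) The cleaner and more economical construction is simply to choose a surjection $Q\twoheadrightarrow G$ from a free $A$-module, set $\Omega(G)^0=\ker(Q\to G)$, define $d^0$ as the inclusion, and obtain $d^1\colon Q\to {}^\sigma(\Omega(G)^0)$ from the fact that $\omega Q\subseteq \Omega(G)^0$; the factorization identities are immediate. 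That said, the real content is the genuine gap you yourself flag: showing $\Omega(G)^0$ is Gorenstein projective over $A$. You gesture at ``assembling the two-sided iteration into a totally acyclic $A$-complex,'' but you never verify total acyclicity, i.e.\ that ${\rm Hom}_A(-,Q')$ stays acyclic for all projective $A$-modules $Q'$. That verification is exactly the delicate change-of-rings step that makes the whole theorem true, and without it the functor $\Omega$ is not known to land in $\mathbf{GF}^0(A;\omega)$. You have correctly located where the difficulty lives, but locating it is not the same as resolving it.
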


\begin{rem}\label{rem:dense}
    By \cite[the proof of Theorem~5.6]{Chen24}, the restricted cokernel functor
    $$ {\rm Cok}^0\colon \mathbf{GF}^0(A; \omega) \longrightarrow \bar{A}\mbox{-}{\rm GProj}$$ is dense. Consequently, an $\bar{A}$-module $M$ is  Gorensstein projective if and only if there is a module factorization $X\in \mathbf{GF}^0(A; \omega)$ with $M\simeq {\rm Cok}^0(X)$.
\end{rem}

In what follows, we will show that the cokernel functor is compatible with a certain idempotent.

Let $e=e^2$ be an idempotent in $A$ satisfying $\sigma(e)=e$. Then $\sigma$ restricts to an automorphism on $eAe$. Consider the element $e\omega e\in eAe$. Then we have
$$e\omega e=e\omega =\omega e.$$
For each $a\in eAe$, we have
$$(e\omega e )a=e\omega a e=e\sigma(a)\omega e=\sigma(a) (e\omega e). $$
In particular, $e\omega e$ is normal in $eAe$. The element $e \omega e$ induces a natural transformation
$$e\omega e \colon {\rm Id}_{eAe\mbox{-}{\rm Mod}} \longrightarrow {^\sigma(-)}.$$
Here, we abuse $\sigma$ with its restriction $\sigma |_{eAe}$. Then the \emph{Schur functor}
\begin{align}\label{fun:Schur}
    S_e\colon \mathbf{F}(A; \omega) \longrightarrow \mathbf{F}(eAe; e\omega e)
\end{align}
is naturally defined, which sends an $\omega$-factorization $X=(X^0, X^1; d_X^0, d_X^1)$ to the $e\omega e$-factorization $S_e(X)=(eX^0, eX^1; d_X^0, d_X^1)$.

\begin{lem}\label{lem:eAe}
Let $\omega$ be regular and normal in $A$. Suppose that $\omega e=e\omega$. Then $e\omega e$ is regular and normal in $eAe$.
\end{lem}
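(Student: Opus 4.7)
The strategy is to separate the two properties: normality has in fact already been established in the discussion preceding the lemma (the computation $(e\omega e)a = e\sigma(a)\omega e = \sigma(a)(e\omega e)$ for $a \in eAe$, noting that $\sigma$ restricts to $eAe$ because $\sigma(e)=e$), so the real content of the lemma is the regularity of $e\omega e$ in $eAe$.

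The plan is to reduce regularity in $eAe$ to regularity in $A$, exploiting the hypothesis $\omega e = e\omega$ to move the idempotent past $\omega$. First I would observe that $\omega e = e\omega$ combined with the defining relation $\omega a = \sigma(a)\omega$ gives $\sigma(e)\omega = e\omega$, so regularity of $\omega$ in $A$ forces $\sigma(e)=e$; this is the compatibility that lets us invoke the earlier setup. Then, given $a \in eAe$ with $a(e\omega e)=0$, I would write
\[
0 = a(e\omega e) = ae\omega e = a\omega e = ae\omega = a\omega,
\]
where the second-to-last equality uses $\omega e = e\omega$ and the last uses $ae=a$. Regularity of $\omega$ in $A$ yields $a=0$.

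For the other side, assume $(e\omega e)a=0$ with $a \in eAe$. Then
\[
0 = (e\omega e)a = e\omega(ea) = e\omega a = e\sigma(a)\omega,
\]
using $ea=a$ and normality of $\omega$ in $A$. Regularity of $\omega$ in $A$ gives $e\sigma(a)=0$, and since $\sigma(a) = \sigma(eae) = e\sigma(a)e \in eAe$, we have $\sigma(a) = e\sigma(a) = 0$, so $a=0$ as $\sigma$ is an automorphism.

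There is no real obstacle here; the lemma is a bookkeeping check. The only subtle point is to justify that $\sigma(e)=e$ (needed to ensure $\sigma$ restricts to $eAe$ and that $\sigma(a) \in eAe$ when $a \in eAe$), which follows immediately from regularity of $\omega$ once the hypothesis $\omega e = e\omega$ is rewritten via the relation $\omega a = \sigma(a)\omega$.
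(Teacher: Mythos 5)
Your proof is correct and follows essentially the same approach as the paper: reduce regularity of $e\omega e$ in $eAe$ to regularity of $\omega$ in $A$ by using $\omega e = e\omega$ to pass the idempotent through, and note that $\sigma(e)=e$ so that $\sigma$ restricts to $eAe$. The only cosmetic difference is that for the case $(e\omega e)a=0$ you detour through normality and $\sigma$, whereas the paper directly computes $(e\omega e)a=\omega a$; both manipulations are equivalent one-liners.
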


\begin{proof}
    Denote by $\sigma$ the unique automorphism determined by $\omega$. Then we have $\sigma(e)=e$. By the discussion above, we infer that $e\omega e$ is normal in $eAe$. Take any element $a\in eAe$. Assume that $(e\omega e) a=0$. We have $(e\omega e) a=(\omega e)e=\omega a$. Sine $\omega$ is regular in $A$, we have $a=0$. Dually, the condition $a(e\omega e)=0$ implies $a=0$. This proves that $e\omega e$ is regular in $eAe$.
\end{proof}

The following result might be viewed as a refinement of Proposition~\ref{prop:Chen24}.

\begin{prop}\label{prop:compa}
     Let $\omega$ be regular and normal in $A$, and let $e$ be an GP-compatible idempotent satisfying $\omega e=e\omega$. Then the corresponding idempotent $\bar{e}$ in $\bar{A}$ is also GP-compatible and the cokernel functor restricts to a triangle equivalence
     $$ {\rm Cok}^0  \colon \underline{\mathbf{GF}}^0(A; \omega)^e \stackrel{\sim}{\longrightarrow} \bar{A}\mbox{-}\underline{\rm GProj}^{\bar{e}}.$$
\end{prop}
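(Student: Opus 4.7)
The plan is to reduce the statement to Proposition~\ref{prop:Chen24} applied in turn to $(A,\omega)$ and to $(eAe,e\omega e)$; the latter makes sense because Lemma~\ref{lem:eAe} ensures that $e\omega e$ is regular and normal in $eAe$. The heart of the argument then lies in identifying $\underline{\mathbf{GF}}^0(A;\omega)^e$ and $\bar{A}\mbox{-}\underline{\rm GProj}^{\bar{e}}$ as essential kernels of two Schur functors sitting in a commutative square with the cokernel functors.

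I would begin by setting up the elementary ideal-theoretic identifications. Using $\omega e=e\omega$ together with the regularity and normality of $\omega$, a short bookkeeping computation shows $eA\cap A\omega=eA\omega=e\omega A$ and $eAe\cap A\omega=eAe\cdot(e\omega e)$, which translate into a ring isomorphism $\bar{e}\bar{A}\bar{e}\simeq eAe/(e\omega e)$ and an isomorphism of left $\bar{e}\bar{A}\bar{e}$-modules $\bar{e}\bar{A}\simeq eA\otimes_{eAe}\bigl(eAe/(e\omega e)\bigr)$. To confirm that $\bar{e}$ is GP-compatible, I would then check the two defining conditions. The projectivity of $\bar{e}\bar{A}$ over $\bar{e}\bar{A}\bar{e}$ is immediate from the second identification, since $eA$ is projective over $eAe$ by GP-compatibility of $e$. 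Given a Gorenstein projective $\bar{A}$-module $\bar{G}$, Remark~\ref{rem:dense} provides some $X\in\mathbf{GF}^0(A;\omega)$ with $\bar{G}\simeq{\rm Cok}^0(X)$; applying $S_e$ componentwise yields $S_e(X)\in\mathbf{GF}^0(eAe;e\omega e)$ (because $S_e$ preserves projectivity and Gorenstein projectivity of modules), and the exactness of $S_e$ on module categories gives $\bar{e}\bar{G}\simeq{\rm Cok}^0(S_e(X))$; Remark~\ref{rem:dense} applied to $(eAe,e\omega e)$ then shows that $\bar{e}\bar{G}$ is Gorenstein projective over $\bar{e}\bar{A}\bar{e}$.

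For the triangle equivalence itself, I would assemble the square
\[\xymatrix{
\underline{\mathbf{GF}}^0(A;\omega)\ar[r]^-{{\rm Cok}^0}\ar[d]_-{S_e} & \bar{A}\mbox{-}\underline{\rm GProj}\ar[d]^-{S_{\bar{e}}}\\
\underline{\mathbf{GF}}^0(eAe;e\omega e)\ar[r]^-{{\rm Cok}^0} & eAe/(e\omega e)\mbox{-}\underline{\rm GProj}
}\]
whose horizontals are the triangle equivalences of Proposition~\ref{prop:Chen24} and whose verticals are the induced Schur functors; the square commutes up to natural isomorphism by exactness of $S_e$ together with the ring identification above. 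By Proposition~\ref{prop:M-2} applied to the idempotent $\mathrm{diag}(e,e)\in M_2(A;\omega,\sigma)$, the subcategory $\underline{\mathbf{GF}}^0(A;\omega)^e$ is precisely the essential kernel of the left vertical, while $\bar{A}\mbox{-}\underline{\rm GProj}^{\bar{e}}$ is by definition the essential kernel of the right vertical; since the horizontals are triangle equivalences they identify the two kernels, yielding the stated restricted equivalence. I expect the main obstacle to be the first step: the ideal-theoretic identifications require a careful interplay between the hypothesis $\omega e=e\omega$ and the normality of $\omega$, after which the rest is essentially formal.
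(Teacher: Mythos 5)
Your proof follows essentially the same route as the paper: the ideal-theoretic identifications $eAe/(e\omega e)\simeq\bar{e}\bar{A}\bar{e}$ and $\bar{e}\bar{A}\simeq\bar{e}\bar{A}\bar{e}\otimes_{eAe}eA$, GP-compatibility of $\bar{e}$ via the density of the cokernel functors, and identification of the two sides as essential kernels of Schur functors in a commutative square whose horizontals are the equivalences of Proposition~\ref{prop:Chen24}. One slip worth fixing: you wrote the module identification as $\bar{e}\bar{A}\simeq eA\otimes_{eAe}\bigl(eAe/(e\omega e)\bigr)$, which is not correct in general (for instance $A=M_2(k[x])$, $\omega=xI$, $e=E_{11}$ gives a proper containment); the correct base-change formula is $\bar{e}\bar{A}\bar{e}\otimes_{eAe}eA$, which is what the projectivity of $eA$ as a \emph{left} $eAe$-module actually buys you. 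The detour through the idempotent $\mathrm{diag}(e,e)$ in $M_2(A;\omega,\sigma)$ is harmless but unnecessary, since the paper's definition of $\mathbf{GF}^0(A;\omega)^e$ already realizes its stable category as the essential kernel of the induced Schur functor.
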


Here, $\mathbf{GF}^0(A; \omega)^e$ denotes the full subcategory of $\mathbf{GF}^0(A; \omega)$ formed by thoses $\omega$-factorizations $X$ such that $S_e(X)$ are projective-injective  in $\mathbf{GF}^0(eAe; e\omega e)$. In particular, the $eAe$-module $eX^0$ has to be projective.

\begin{proof}
    We observe that $A\omega \cap eAe=eAe(e\omega e)$. It follows that the surjection $eAe\rightarrow \bar{e}\bar{A}\bar{e}$ induces an isomorphism of rings.
    $$eAe/{(e\omega e)}\simeq \bar{e}\bar{A}\bar{e}$$
    We identify these two rings. Similarly, by $eA\cap A\omega=(e\omega e)eA$, we obtain an isomorphism of $\bar{e}\bar{A}\bar{e}$-modules
    $$\bar{e}\bar{A}\simeq \bar{e}\bar{A}\bar{e}\otimes_{eAe} eA.$$
    Since $eA$ is a projective $eAe$-modules, the $\bar{e}\bar{A}\bar{e}$-module $\bar{e}\bar{A}$ is also projective.

    Recall from Lemma~\ref{lem:eAe} that $e\omega e$ is regular and normal in $eAe$. We observe a commutative diagram of functors.
   \begin{align}\label{diag:S}
       \xymatrix{
  \mathbf{GF}^0(A; \omega)\ar[d]_-{S_e}  \ar[rr]^-{{\rm Cok}^0} && \bar{A}\mbox{-GProj} \ar[d]^-{S_{\bar{e}}}\\
\mathbf{GF}^0(eAe; e\omega e)  \ar[rr]^-{{\rm Cok}^0} && \bar{e}\bar{A}\bar{e}\mbox{-GProj}}
   \end{align}
    Here, we use the fact that these two horizontal  functors are dense; see Remark~\ref{rem:dense}. It follows that the restricted Schur functor $S_{\bar{e}}$ associated to the idempotent $\bar{e}$ is well defined. In other words, for any Gorenstein projective $\bar{A}$-module $G$, the $\bar{e}\bar{A}\bar{e}$-module $S_{\bar{e}}(G)=\bar{e}G$ is Gorenstein projective. We infer that $\bar{e}$ is GP-compatible.

    Consider the corresponding diagram of (\ref{diag:S}) among the stable categories.
      \begin{align*}
       \xymatrix{
  \underline{\mathbf{GF}}^0(A; \omega)\ar[d]_-{S_e}  \ar[rr]^-{{\rm Cok}^0} && \bar{A}\mbox{-\underline{GProj}} \ar[d]^-{S_{\bar{e}}}\\
\underline{\mathbf{GF}}^0(eAe; e\omega e)  \ar[rr]^-{{\rm Cok}^0} && \bar{e}\bar{A}\bar{e}\mbox{-\underline{GProj}}}
   \end{align*}
   By Proposition~\ref{prop:Chen24} the two  horizontal functors are equivalences. It follows that the upper cokernel functor induces an equivalence between the essential kernels of the vertical functors. Then the required equivalence follows immediately.
\end{proof}

\section{Two-dimensional factorizations}\label{sec:2-dim}

We introduce two-dimensional factorizations with respect to two commuting natural transformations.  When the  category of underlying objects is Frobenius, so is the category of such factorizations; see Proposistion~\ref{prop:Frobenius-2}. However, the corresponding p-null-homotopical morphisms are much more complicated; see Definition~\ref{defn:p-null-2}.

\subsection{The general setting}\label{subsec:6.1} Let $\mathcal{C}$ be a category.  We fix an autoequivalence $T$ and a natural transformation $\omega  \colon {\rm Id}_\mathcal{C}\rightarrow T$ satisfying $\omega T=T\omega$. Let $T'\colon \mathcal{C}\rightarrow \mathcal{C}$ be another autoequivalence, and $\omega'\colon {\rm Id}_\mathcal{C}\rightarrow T'$ be a natural transformation satisfying $\omega'T'=T'\omega'$.

Similar to (\ref{quad:1}), we fix the adjunction quadruple
$$(T'^{-1}, T'; \eta', \varepsilon').$$
We define a natural transformation $\omega'^{(-)}\colon T'^{-1}\rightarrow {\rm Id}_\mathcal{C}$ by $\omega'^{(-)}=\varepsilon'\circ T'^{-1}\omega'$; see Remark~\ref{rem:nat-trans}.

We assume that $\omega'$ \emph{commutes} with $\omega$ in the following sense: there is a natural isomorphism
$$\xi \colon T'T \stackrel{\sim}\longrightarrow TT'$$
such that
$$\omega T'=\xi  \circ (T'\omega) \mbox{ and } T\omega'=\xi  \circ (\omega'T).$$

\begin{rem}\label{rem:nat-iso}
    (1) The isomorphism $\xi$ induces two natural isomorphisms
    $$\xi^{(+)}\colon T^{-1}T'\longrightarrow T'T^{-1} \mbox{ and } \xi^{(-)}\colon T'^{-1}T\longrightarrow TT'^{-1},$$
    which are given as follows:
    $$T^{-1}T'\xrightarrow{T^{-1}T'\eta} T^{-1}T'TT^{-1} \xrightarrow{T^{-1}\xi T^{-1}} T^{-1}TT'T^{-1} \xrightarrow{\varepsilon T'T^{-1}} T'T^{-1},$$
    and
    $$T'^{-1}T\xrightarrow{T'^{-1}T\eta'} T'^{-1}TT' T'^{-1}\xrightarrow{T'^{-1}\xi^{-1}T'^{-1}} T'^{-1}T'TT'^{-1}\xrightarrow{\varepsilon'TT'^{-1}} T T'^{-1}. $$

    (2) Later, we will need the following four natural isomorphisms:
\begin{align*}
u^{0, 0}=\varepsilon'\circ \varepsilon T'^{-1}T' \circ T^{-1} \xi^{(-)} T'& \colon T^{-1}T'^{-1}TT'\longrightarrow {\rm Id}_\mathcal{C};\\
u^{1, 0}=\varepsilon'\circ \eta^{-1}T'^{-1}T' &\colon TT^{-1}T'^{-1}T'\longrightarrow {\rm Id}_\mathcal{C};\\
u^{0, 1}=  \eta'^{-1} \circ T'\varepsilon T'^{-1} \circ T'T^{-1} \xi^{(-)} &\colon   T'T^{-1}T'^{-1}T\longrightarrow {\rm Id}_\mathcal{C};\\
u^{1, 1}=  \eta'^{-1}\circ T'\eta^{-1}T'^{-1}\circ \xi^{-1} T^{-1}T'^{-1} &\colon TT'T^{-1}T'^{-1}\longrightarrow {\rm  Id}_\mathcal{C}.
\end{align*}
    We mention that $u^{1,1}=\eta'^{-1}\circ \eta^{-1}T'T'^{-1}\circ T (\xi^{(+)})^{-1}T'^{-1}$.
\end{rem}

 We consider the category $\mathbf{F}(\mathcal{C}; \omega)$ of $\omega$-factorizations.
The pair $(T', \xi )$ gives rise to an autoequivalence
$$\tilde{T'}\colon \mathbf{F}(\mathcal{C}; \omega)\longrightarrow \mathbf{F}(\mathcal{C}; \omega),$$
which sends an $\omega$-factorization $X=(X^0, X^1; d_X^0, d_X^1)$ to
$$\tilde{T'}(X)=(T'(X_0), T'(X_1); T'(d_X^0), \xi_{X^0}\circ T'(d_X^1)),$$
and a morphism $f=(f^0, f^1)$ to $\tilde{T'}(f)=(T'(f_0), T'(f_1))$. To verify that $\tilde{T'}(X)$ is an $\omega$-factorization, we use the condition $\omega T'=\xi  \circ (T'\omega)$. The natural transformation $\omega'\colon {\rm Id}_\mathcal{C}\rightarrow T'$ induces  further a new natural transformation
$$\tilde{\omega}' \colon {\rm Id}_{\mathbf{F}(\mathcal{C}; \omega)}\longrightarrow \tilde{T'},$$
which is defined such that
$$\tilde{\omega}'_X=(\omega'_{X^0}, \omega'_{X^1})\colon X\longrightarrow \tilde{T'}(X).$$
To verify that this is indeed a morphism between $\omega$-factorizations, one uses $T\omega'=\xi \circ (\omega'T)$. Finally, we observe that
$$\tilde{\omega}' \tilde{T'}=\tilde{T'}\tilde{\omega}'.$$

We consider the category $\mathbf{F}(\mathbf{F}(\mathcal{C}; \omega);\tilde{\omega}')$ of $\tilde{\omega}'$-factorizations in $\mathbf{F}(\mathcal{C}; \omega)$. Its objects are given by the following commutative diagrams,  whose rows are $\omega$-factorizations and columns are $\omega'$-factorizations.
\begin{align}\label{diag:fact}
    \xymatrix{
T'(X^{0,0}) \ar[rr]^-{T'(d_h^{0,0})} && T'(X^{1,0}) \ar[rr]^-{\xi_{X^{0,0}}\circ T'(d_h^{1,0})} && TT'(X^{0,0})\\
X^{0,1}\ar[u]^-{d_v^{0,1}} \ar[rr]^-{d_h^{0,1}} && X^{1,1} \ar[u]^-{d_v^{1,1}}\ar[rr]^-{d_h^{1,1}} && T(X^{0,1}) \ar[u]_-{T(d_v^{0,1})}\\
X^{0,0}\ar[u]^-{d_v^{0,0}} \ar[rr]^-{d_h^{0,0}} && X^{1,0} \ar[u]^-{d_v^{1,0}}\ar[rr]^-{d_h^{1,0}} && T(X^{0,0}) \ar[u]_-{T(d_v^{0,0})}
}
\end{align}
Here, to be more precise, the rightmost column is isomorphic to an $\omega'$-factorization via the isomorphism $\xi_{X^{0,0}}$. Such a diagram will be called an \emph{$(\omega, \omega')$-factorization along $\xi$}, and is denoted by $X^{\bullet, \bullet}$. Alternatively, we have an $\tilde{\omega}'$-factorization
$$X^{\bullet, \bullet}=(X^{\bullet, 0}, X^{\bullet, 1}; d_v^{\bullet, 0}, d_v^{\bullet, 1}).$$
We view such an  $(\omega, \omega')$-factorization as a \emph{two-dimensional factorization}, which is visualized as a commutative  square with curved arrows.
\begin{align}\label{diag:2-dim}
\xymatrix{
 X^{0, 1}\ar@<+.7ex>[rr]^-{d_h^{0, 1}}    \ar@<+.7ex>@{~>}[dd]^-{d_v^{0,1}} && X^{1, 1} \ar@<+.7ex>@{~>}[ll]^-{d_h^{1,1}}  \ar@<+.7ex>@{~>}[dd]^-{d_v^{1,1}}\\ \\
 X^{0, 0}\ar@<+.7ex>[rr]^-{d_h^{0, 0}} \ar@<+.7ex>[uu]^-{d_v^{0, 0}}  && X^{1, 0}  \ar@<+.7ex>[uu]^-{d_v^{1, 0}}\ar@<+.7ex>@{~>}[ll]^-{d_h^{1,0}}
}
\end{align}
Here, the subscript $h$ means `horizontal' and $v$ means `vertical'. The commutativity means that the four commutative squares in (\ref{diag:fact}).

We write
$$\mathbf{F}(\mathcal{C};\omega, \omega')=\mathbf{F}(\mathbf{F}(\mathcal{C}; \omega);\tilde{\omega}'),$$
which is referred as the category of $(\omega, \omega')$-factorizations along $\xi$.

Alternatively, we may form the category $\mathbf{F}(\mathcal{C};\omega')$ of $\omega'$-factorizations first. The pair $(T, \xi^{-1})$ gives rise to an autoequivalence
$$\tilde{T}\colon \mathbf{F}(\mathcal{C};\omega')\longrightarrow \mathbf{F}(\mathcal{C};\omega'),$$
which sends an $\omega'$-factorization $Y=(Y^0, Y^1; d_Y^0, d_Y^1)$ to
$$\tilde{T}(Y)=(T(Y^0), T(Y^1); T(d_Y^0), \xi^{-1}_{Y^0}\circ T(d_Y^1)).$$
Moreover, we have a natural transformation
$$\tilde{\omega}\colon {\rm Id}_{\mathbf{F}(\mathcal{C};\omega')} \longrightarrow \tilde{T}$$
induced by $\omega$. We form the category of \emph{$(\omega', \omega)$-factorizations along $\xi^{-1}$}
$$\mathbf{F}(\mathcal{C}; \omega', \omega)=\mathbf{F}(\mathbf{F}(\mathcal{C}; \omega');\tilde{\omega}).$$

We have the following symmetry property.

\begin{prop}\label{prop:symmetry}
    Keep the assumptions above. Then we have an isomorphism of categories.
    $$\mathbf{F}(\mathcal{C}; \omega, \omega')\simeq \mathbf{F}(\mathcal{C}; \omega', \omega)$$
\end{prop}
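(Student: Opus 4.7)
The plan is to construct a pair of mutually inverse functors by literally transposing the two-dimensional square (\ref{diag:fact}), observing that the data of an object in either category is nothing but the commutative square with all its twelve structure maps, read either row-first or column-first.

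First I would unpack both sides in terms of the picture (\ref{diag:2-dim}). An object $X^{\bullet,\bullet}$ of $\mathbf{F}(\mathcal{C};\omega,\omega')$ is an $\tilde{\omega}'$-factorization whose two components $X^{\bullet,0}, X^{\bullet,1}\in\mathbf{F}(\mathcal{C};\omega)$ are the two horizontal rows. Spelling out the axioms of an $\omega$-factorization for each row, an $\omega'$-factorization for each column (the latter being the component-wise conditions $d_v^{i,1}\circ d_v^{i,0}=\omega'_{X^{i,0}}$ and $T'(d_v^{i,0})\circ d_v^{i,1}=\omega'_{X^{i,1}}$), together with the four commuting squares expressing that $d_v^{\bullet,0}$ and $d_v^{\bullet,1}$ are morphisms in $\mathbf{F}(\mathcal{C};\omega)$, gives exactly the commutative diagram (\ref{diag:fact}). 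The only place where $\xi$ appears is in the ``top-right'' square of (\ref{diag:fact}), namely
$$\bigl(\xi_{X^{0,0}}\circ T'(d_h^{1,0})\bigr)\circ d_v^{1,1}=T(d_v^{0,1})\circ d_h^{1,1},$$
coming from the fact that the backward direction of $\tilde{\omega}'$ lands in $\tilde{T'}$, whose structure map on the rightmost column is twisted by $\xi$.

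Second, I would define $\Phi\colon\mathbf{F}(\mathcal{C};\omega,\omega')\to\mathbf{F}(\mathcal{C};\omega',\omega)$ on objects by transposition: $\Phi(X)^{i,j}:=X^{j,i}$, with the original columns becoming the new $\omega'$-factorization rows and the original rows becoming the new $\omega$-factorization columns. Most of the axioms for $\Phi(X)$ to be an object of $\mathbf{F}(\mathcal{C};\omega',\omega)$ match those for $X$ on the nose after the relabeling. The one nontrivial check is that the single $\xi$-equation above, by the definition of $\tilde{T}$ in Subsection~\ref{subsec:6.1}, is equivalent to
$$T(d_v^{0,1})\circ d_h^{1,1}=\xi_{X^{0,0}}\circ T'(d_h^{1,0})\circ d_v^{1,1}\quad\Longleftrightarrow\quad\xi^{-1}_{X^{0,0}}\circ T(d_v^{0,1})\circ d_h^{1,1}=T'(d_h^{1,0})\circ d_v^{1,1},$$
which is exactly the condition that the backward horizontal arrow of $\Phi(X)$ at level $1$ takes values in $\tilde{T}(Y^{\bullet,0})$ (whose backward structure map is twisted by $\xi^{-1}$). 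On morphisms, a morphism in either category is a quadruple of component morphisms $f^{i,j}$ commuting with all twelve structure maps of the square, so $\Phi$ is defined tautologically by transposition and is visibly functorial.

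Finally, running the same construction with the roles of $(\omega,\omega',\xi)$ and $(\omega',\omega,\xi^{-1})$ interchanged produces a functor $\Psi\colon\mathbf{F}(\mathcal{C};\omega',\omega)\to\mathbf{F}(\mathcal{C};\omega,\omega')$, and $\Phi\Psi$ and $\Psi\Phi$ are the identity functors on the nose because double transposition is the identity on squares. The only real obstacle is the $\xi$-bookkeeping in the previous paragraph: once the single equivalence of the two $\xi$-twisted commutativity relations is established, the rest of the argument is purely formal and the isomorphism of categories is immediate.
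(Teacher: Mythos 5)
Your proposal is correct and follows exactly the paper's approach: the isomorphism is transposition of the commutative square (\ref{diag:fact}), and the only nontrivial point — which you correctly isolate and verify — is that the single $\xi$-twisted commutativity relation in $\mathbf{F}(\mathcal{C};\omega,\omega')$ is equivalent, after multiplying through by $\xi^{-1}_{X^{0,0}}$, to the $\xi^{-1}$-twisted relation required of an object of $\mathbf{F}(\mathcal{C};\omega',\omega)$, precisely because $\tilde T'$ is built from $\xi$ while $\tilde T$ is built from $\xi^{-1}$. The paper gives this as a one-line observation; your write-up supplies the detailed bookkeeping, but there is no difference in method.
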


\begin{proof}
    We observe that objects in both $\mathbf{F}(\mathcal{C}; \omega, \omega')$ and $\mathbf{F}(\mathcal{C}; \omega, \omega')$ are represented by commutative diagrams of the shape (\ref{diag:fact}) or commutative  squares with curved arrows. Then the required isomorphism is obtained by the transpose of diagrams or squares.
\end{proof}

 In what follows, we will concentrate  on $\mathbf{F}(\mathcal{C}; \omega, \omega')$. Similar to (\ref{theta:1}), we have two functors
$$\tilde{\theta}^s\colon \mathbf{F}(\mathcal{C}, \omega)\longrightarrow  \mathbf{F}(\mathcal{C}; \omega, \omega')$$
for $s=0, 1$, which assign to each $\omega$-factorization $X$ two \emph{trivial $\tilde{\omega}'$-factorizations} $\tilde{\theta}^s(X)$. Consequently, for $0\leq s,t\leq 1$, we have four functors
$$\theta^{s,t}=\tilde{\theta}^t \circ \theta^s\colon \mathcal{C}\longrightarrow \mathbf{F}(\mathcal{C}; \omega, \omega').$$
For each object $C$ in $\mathcal{C}$, the $(s,t)$-entry of $\theta^{s,t}(C)$ is precisely $C$. For example, $\theta^{1,1}(C)$ is depicted as the following commutative diagram.
\[\xymatrix{
T'T'^{-1}T^{-1}(C) \ar[rr]^-{T'T'^{-1}(\omega^{(-)}_C)} && T' T'^{-1}(C) \ar[rr]^-{} && TT'T'^{-1}T^{-1}(C)\\
T^{-1}(C) \ar[u]^-{\eta'_{T^{-1}(C)}} \ar[rr]^-{\omega^{(-1)}_C} && C \ar[u]^-{\eta'_{C}}\ar[rr]^-{\eta_C} && TT^{-1}(C)\ar[u]_-{T(\eta'_{T^{-1}(C)})} \\
T'^{-1}T^{-1}(C) \ar[u]^-{\omega'^{(-)}_{T^{-1}(C)}} \ar[rr]^-{T'^{-1}(\omega^{(-)}_C)} && T'^{-1}(C) \ar[u]^-{\omega'^{(-)}_C} \ar[rr]^-{\xi^{(-)}_{T^{-1}(C)}\circ T'^{-1}(\eta_C)} && TT'^{-1}T^{-1}(C)  \ar[u]_-{T(\omega'^{(-)}_{T^{-1}(C)})}
}\]
Here, for $\xi^{(-)}$ we refer to Remark~\ref{rem:nat-iso}(1). The unnamed arrow at the top is given by $\xi_{T'^{-1}T^{-1}(C)} \circ T'(\xi^{(-)}_{T^{-1}(C)}\circ T'^{-1}(\eta_C))$.

\subsection{Exact structures}  In what follows, we replace the category  $\mathcal{C}$ above by an exact category $(\mathcal{A}, \mathcal{E})$ and assume that $T$ is an exact autoequivalence on $\mathcal{A}$. We declare that a composable pair
$$X^{\bullet, \bullet}\xrightarrow{f^{\bullet, \bullet}} Y^{\bullet, \bullet} \xrightarrow{g^{\bullet, \bullet}} Z^{\bullet, \bullet}$$
in $\mathbf{F}(\mathcal{A}; \omega, \omega')$ is a conflation if each component belongs to $\mathcal{E}$. By Lemma~\ref{lem:F-ex}, this gives rise to an exact structure on $\mathbf{F}(\mathcal{A}; \omega, \omega')$.

\begin{prop}\label{prop:Frobenius-2}
    Assume that $\mathcal{A}$ has enough projective objects. Then so does $\mathbf{F}(\mathcal{A}; \omega, \omega')$. Moreover, an object $X^{\bullet, \bullet}$ is projective if and only if it is isomorphic to a direct summand of $\bigoplus_{0\leq i, j\leq 1} \theta^{i,j}(P)$ for some projective object $P$. Furthermore, if $\mathcal{A}$ is Frobenius, so is $\mathbf{F}(\mathcal{A}; \omega, \omega')$.
\end{prop}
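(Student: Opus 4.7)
The plan is to view $\mathbf{F}(\mathcal{A};\omega,\omega')$ as the iterated factorization category $\mathbf{F}(\mathbf{F}(\mathcal{A};\omega); \tilde{\omega}')$, which is exactly how it was defined in Subsection~\ref{subsec:6.1}, and then invoke Lemma~\ref{lem:proj} and Proposition~\ref{prop:Frobenius} in two successive layers, on the base $\mathcal{A}$ first and on $\mathbf{F}(\mathcal{A};\omega)$ second. All three conclusions (enough projectives, the summand description, and the Frobenius property) will fall out of this reduction.

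For the first layer, Lemma~\ref{lem:proj} applied to $(\mathcal{A},T,\omega)$ gives that $\mathbf{F}(\mathcal{A};\omega)$ has enough projectives and that every projective there is a summand of $\theta^0(P)\oplus \theta^1(Q)$ for some projective $P,Q\in\mathcal{A}$. To apply the same lemma a second time, with $(\mathbf{F}(\mathcal{A};\omega), \tilde{T}', \tilde{\omega}')$ in place of $(\mathcal{A},T,\omega)$, I need $\tilde{T}'$ to be an exact autoequivalence of the exact category $\mathbf{F}(\mathcal{A};\omega)$. Since conflations in $\mathbf{F}(\mathcal{A};\omega)$ are defined componentwise (Lemma~\ref{lem:F-ex}) and $\tilde{T}'$ acts componentwise by the assumed exact autoequivalence $T'$ (twisted by the isomorphism $\xi$, which does not affect exactness), this is a routine check.

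Applying Lemma~\ref{lem:proj} the second time yields that $\mathbf{F}(\mathcal{A};\omega,\omega')$ has enough projectives and that each projective is a summand of $\tilde{\theta}^0(X)\oplus \tilde{\theta}^1(Y)$ for projective $X,Y$ in $\mathbf{F}(\mathcal{A};\omega)$. Substituting the first-layer description of $X$ and $Y$, and using additivity of $\tilde{\theta}^t$ together with the defining identity $\tilde{\theta}^t\circ \theta^s = \theta^{s,t}$, I conclude that every projective in $\mathbf{F}(\mathcal{A};\omega,\omega')$ is a summand of a finite direct sum of objects of the form $\theta^{s,t}(P_{s,t})$ with $P_{s,t}$ projective in $\mathcal{A}$. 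Setting $P=\bigoplus_{s,t}P_{s,t}$ and using additivity of each $\theta^{s,t}$, this summand can be taken to be a summand of $\bigoplus_{0\le s,t\le 1}\theta^{s,t}(P)$, as required.

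For the Frobenius claim, I would run exactly the same two-layer argument with Proposition~\ref{prop:Frobenius} in place of Lemma~\ref{lem:proj}: first infer from $\mathcal{A}$ Frobenius that $\mathbf{F}(\mathcal{A};\omega)$ is Frobenius, then apply Proposition~\ref{prop:Frobenius} to the Frobenius category $\mathbf{F}(\mathcal{A};\omega)$ endowed with the exact autoequivalence $\tilde{T}'$ and the natural transformation $\tilde{\omega}'$ satisfying $\tilde{\omega}'\tilde{T}'=\tilde{T}'\tilde{\omega}'$. The main obstacle — really the only subtle point — is the verification that $\tilde{T}'$ is an exact autoequivalence, which in turn relies on the exactness of $T'$ and on the careful bookkeeping of the twist $\xi$ in the definition of $\tilde{T}'$; once these formal identifications are made, both the enough-injectives statement and the coincidence of projectives with injectives follow by dualising the argument used for the projective characterisation.
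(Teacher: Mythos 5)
Your proposal is correct and matches the paper's proof, which is exactly the two-layer iteration of Lemma~\ref{lem:proj} and Proposition~\ref{prop:Frobenius}, first on $\mathcal{A}$ and then on $\mathbf{F}(\mathcal{A};\omega)$. The only added content beyond the paper's one-line proof is your (correct) observation that $\tilde{T}'$ is an exact autoequivalence of $\mathbf{F}(\mathcal{A};\omega)$ because conflations there are componentwise and $T'$ is exact, plus the routine bookkeeping to consolidate the summand description into a single $P$.
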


\begin{proof}
    This is obtained by applying Lemma~\ref{lem:proj} and Proposition~\ref{prop:Frobenius}, repeatedly.
\end{proof}

We denote by  $\underline{\mathbf{F}}(\mathcal{A}; \omega, \omega')$ the stable category. To understand its morphisms, we will consider the corresponding p-null-homotopical morphisms in $\mathbf{F}(\mathcal{A}; \omega, \omega')$.

\begin{defn}\label{defn:p-null-2}
    Let $f^{\bullet, \bullet}\colon X^{\bullet, \bullet}\rightarrow Y^{\bullet, \bullet}$ be a morphism in $\mathbf{F}(\mathcal{A}; \omega, \omega')$. We say that $f^{\bullet, \bullet}$ is \emph{p-null-homotopical} provided that there exist four morphisms $s^{0,0}\colon X^{0,0}\rightarrow T^{-1}T'^{-1}(Y^{1, 1})$, $s^{1, 0}\colon X^{1, 0}\rightarrow T'^{-1}(Y^{0, 1})$, $s^{0, 1}\colon X^{0, 1}\rightarrow T^{-1}(Y^{1, 0})$ and $s^{1, 1}\colon X^{1,1}\rightarrow Y^{0, 0}$ such that the following conditions are satisfied.
    \begin{enumerate}
    \item[(1)] The four morphisms $s^{i,j}$ factor through projective objects in $\mathcal{A}$.
    \item[(2)] We have
    \begin{align*}
    f^{0,0}=&s^{1,1}\circ d_v^{1,0}\circ d_h^{0,0}+\varepsilon'_{Y^{0,0}}\circ T'^{-1}(\partial_v^{0,1})\circ s^{1,0}\circ d_h^{0, 0}\\
    &+\varepsilon_{Y^{0, 0}} \circ T^{-1}(\partial_h^{1,0})\circ s^{0,1}\circ d_v^{0,0}+u^{0, 0}_{Y^{0, 0}}\circ T^{-1}T'^{-1}(T(d_v^{0,1})\circ d_h^{1,1})\circ s^{0,0},\\
f^{1,0}=& \eta^{-1}_{Y^{1,0}}\circ T(s^{0,1})\circ T(d_v^{0,0})\circ d_h^{1,0}+u^{1, 0}_{Y^{1, 0}}\circ TT^{-1}T'^{-1}(\partial_v^{1,1}) \circ T(s^{0, 0})\circ d_h^{1, 0}\\
    &+\partial_h^{0,0}\circ s^{1,1}\circ d_v^{1,0}+\varepsilon'_{Y^{1,0}}\circ T'^{-1}(\partial_v^{1,1}\circ \partial_h^{0,1})\circ s^{1,0},\\
    f^{0,1}=&(\eta')^{-1}_{Y^{1,0}}\circ T'(s^{1,0})\circ d_v^{1,1}\circ d_h^{0,1}+\partial_v^{0, 0} \circ s^{1,1}\circ d_h^{0,1}\\
    &+u^{0, 1}_{Y^{0, 1}}\circ T'T^{-1}T'^{-1}(\partial_h^{1,1})\circ T'(s^{0,0})\circ d_v^{0,1}+\varepsilon_{Y^{0,1}}\circ T^{-1}(T(\partial_v^{0, 0})\circ \partial_h^{1,0})\circ s^{0,1},\\
    \mbox{and} & \\
    f^{1,1}=&  u^{1,1}_{Y^{1, 1}}\circ TT'(s^{0,0})\circ T(d_v^{0,1})\circ d_h^{1,1}+ \eta^{-1}_{Y^{1,1}}\circ \partial_v^{1,0}\circ T(s^{0,1})\circ d_h^{1,1},\\
    &+(\eta')^{-1}_{Y^{1,1}}\circ T'T'^{-1}(\partial_h^{0,1})\circ T'(s^{1,0})\circ d_v^{1,1}+\partial_v^{1,0}\circ \partial_h^{0,0}\circ s^{1,1}.
    \end{align*}
    \end{enumerate}
    Here, $Y^{\bullet, \bullet}=(Y^{\bullet, 0}, Y^{\bullet, 1}; \partial_v^{\bullet, 0}, \partial_v^{\bullet, 1})$ and the natural isomorphisms $u^{s, t}$ are defined in Remark~\ref{rem:nat-iso}.
\end{defn}

\begin{lem}\label{lem:p-null-2}
     Let $f^{\bullet, \bullet}\colon X^{\bullet, \bullet}\rightarrow Y^{\bullet, \bullet}$ be a morphism in $\mathbf{F}(\mathcal{A}; \omega, \omega')$. Then $f^{\bullet, \bullet}$ is p-null-homotopical if and only if it factors through $\bigoplus_{0\leq i, j\leq 1} \theta^{i,j}(P)$ for some projective object $P$ in $\mathcal{A}$.
\end{lem}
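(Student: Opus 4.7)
The plan is to follow the template of Lemma~\ref{lem:p-null} applied iteratively, since by construction $\mathbf{F}(\mathcal{A};\omega,\omega')=\mathbf{F}(\mathbf{F}(\mathcal{A};\omega);\tilde{\omega}')$. Composing the adjunctions of Lemma~\ref{lem:3adj} for the outer $\tilde{\omega}'$-factorization structure with those for the inner $\omega$-factorization structure yields, for each $(i,j)\in\{0,1\}^2$, natural isomorphisms
\begin{align*}
\mathrm{Hom}_{\mathbf{F}(\mathcal{A};\omega,\omega')}(\theta^{i,j}(P), Y^{\bullet,\bullet}) &\simeq \mathrm{Hom}_{\mathcal{A}}(P, Y^{i,j}), \\
\mathrm{Hom}_{\mathbf{F}(\mathcal{A};\omega,\omega')}(X^{\bullet,\bullet}, \theta^{i,j}(P)) &\simeq \mathrm{Hom}_{\mathcal{A}}(T^i T'^j(X^{1-i,1-j}), P).
\end{align*}
Hence any morphism $X^{\bullet,\bullet}\to\theta^{i,j}(P)\to Y^{\bullet,\bullet}$ is equivalent, after using the natural isomorphisms $u^{s,t}$ of Remark~\ref{rem:nat-iso} to absorb the twists by $T$ and $T'$, to a single morphism $s^{1-i,1-j}\colon X^{1-i,1-j}\to T^{-i}T'^{-j}(Y^{i,j})$ that factors through $P$. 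This sets up a precise dictionary between the four summand types appearing in the generic projective object of Proposition~\ref{prop:Frobenius-2} and the four pieces of homotopy data of Definition~\ref{defn:p-null-2}.

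For the ``only if'' direction, assume $f^{\bullet,\bullet}$ factors as $X^{\bullet,\bullet}\xrightarrow{h} \bigoplus_{(i,j)} \theta^{i,j}(P^{i,j}) \xrightarrow{g} Y^{\bullet,\bullet}$ with each $P^{i,j}$ projective. The $(i,j)$-component of $g$ corresponds to a morphism $P^{i,j}\to Y^{i,j}$, and the $(i,j)$-component of $h$ to a morphism $T^iT'^j(X^{1-i,1-j})\to P^{i,j}$; their composite, transported through adjunction, gives a map $s^{1-i,1-j}$ of the shape required in Definition~\ref{defn:p-null-2}, factoring through $P^{i,j}$ and hence through a projective. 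Conversely, given homotopies $s^{i,j}$ factoring through projectives $P^{i,j}$, reverse the recipe to produce four morphisms of the form $X^{\bullet,\bullet}\to \theta^{1-i,1-j}(P^{i,j})\to Y^{\bullet,\bullet}$, whose sum will factor $f^{\bullet,\bullet}$ through $\bigoplus_{(i,j)} \theta^{1-i,1-j}(P^{i,j})$ provided the four identities of Definition~\ref{defn:p-null-2} are verified.

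The crux of the proof is this verification: when $f^{\bullet,\bullet}$ is the sum of four composites of the above form, each of its four components $f^{k,\ell}$ decomposes into four contributions indexed by the summand $(i,j)$, and these sixteen terms must match the sixteen summands on the right-hand sides of the four formulas in Definition~\ref{defn:p-null-2}. Unpacking each contribution amounts to writing out the explicit adjunction isomorphism twice, using (\ref{iso:theta0})--(\ref{iso:theta11}) and their duals, together with the naturality of $\eta,\varepsilon,\eta',\varepsilon'$, the commutation relations $\omega T'=\xi\circ T'\omega$ and $T\omega'=\xi\circ \omega' T$, and the identities of Remark~\ref{rem:nat-trans} such as $T\omega^{(-1)}\circ\eta=\omega$.

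The principal obstacle is not conceptual but combinatorial: sixteen separate term-by-term verifications, each a short chase of naturality, with the awkward natural isomorphisms $u^{s,t}$ appearing exactly where the two adjunction layers interact. The organizing principle is the iterated adjunction itself, which guarantees that once the dictionary is set up correctly, the sixteen contributions line up in exactly the pattern dictated by Definition~\ref{defn:p-null-2}.
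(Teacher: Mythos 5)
Your proposal is correct and rests on the same key mechanism as the paper: the four composed adjunctions $\Hom(\theta^{i,j}(P),Y^{\bullet,\bullet})\simeq\Hom_{\mathcal A}(P,Y^{i,j})$ and $\Hom(X^{\bullet,\bullet},\theta^{i,j}(P))\simeq\Hom_{\mathcal A}(T^iT'^j(X^{1-i,1-j}),P)$, together with the characterization of projectives in Proposition~\ref{prop:Frobenius-2}. The difference is organizational. The paper does not attack the sixteen-term match head on; it first reformulates Definition~\ref{defn:p-null-2} through the intermediate Frobenius category $\mathbf{F}(\mathcal A;\omega)$, observing that $f^{\bullet,\bullet}$ is p-null-homotopical if and only if there exist two homotopies $h^{\bullet,0},h^{\bullet,1}$ in $\mathbf{F}(\mathcal A;\omega)$, satisfying the two one-dimensional homotopy identities for the outer $\tilde\omega'$-structure, that are themselves p-null-homotopical in $\mathbf{F}(\mathcal A;\omega)$. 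The paper then applies Lemma~\ref{lem:p-null} twice as a black box; the only thing it ``omits'' is the componentwise unfolding that identifies $(s^{0,0},s^{1,0})$ with $h^{\bullet,0}$ and $(s^{0,1},s^{1,1})$ with $h^{\bullet,1}$, which is just Definition~\ref{defn:p-null1} read in the inner category. You instead flatten both layers at once, going directly from $\mathbf{F}(\mathcal A;\omega,\omega')$ to $\mathcal A$, which is valid but forces you to replay the content of Lemma~\ref{lem:p-null} twice and carry the entire $4\times 4$ verification in a single step. The two-step organization buys cleaner bookkeeping (the pattern of $u^{s,t}$'s falls out of where the two adjunction layers meet, rather than needing to be checked term by term); beyond that the arguments coincide, and both you and the paper defer the explicit term chase, so the level of rigor is comparable.
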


\begin{proof}
We have the induced adjunction quadruple $((\tilde{T}')^{-1}, \tilde{T}'; \tilde{\eta}', \tilde{\varepsilon}')$ on $\mathbf{F}(\mathcal{A}, \omega)$ from  the one $(T'^{-1}, T'; \eta', \varepsilon')$ on $\mathcal{A}$.

We observe that the following fact holds: the morphism $f^{\bullet, \bullet}$ is p-null-homotopical if and only if there are morphisms $h^{\bullet, 0}\colon X^{\bullet, 0}\rightarrow (\tilde{T}')^{-1} (Y^{\bullet, 1})$ and $h^{\bullet, 1} \colon X^{\bullet,1} \rightarrow Y^{\bullet, 0}$ in $\mathbf{F}(\mathcal{A}, \omega)$ such that
$$f^{\bullet, 0}= h^{\bullet, 1}\circ d_v^{\bullet, 0}+ \tilde{\varepsilon}'_{Y^{\bullet, 0}} \circ (\tilde{T}')^{-1}(\partial_v^{\bullet, 1})\circ h^{\bullet, 0}$$
and
$$f^{\bullet, 1}=\partial_v^{\bullet, 0} \circ h^{\bullet, 1}+ (\tilde{\eta}'_{Y^{\bullet, 1}})^{-1} \circ \tilde{T}'(h^{\bullet, 0})\circ d_v^{\bullet, 1};$$
moreover, both $h^{\bullet, 0}$ and $h^{\bullet, 1}$ are p-null-homotopical in $\mathbf{F}(\mathcal{A}, \omega)$. In view of Definition~\ref{defn:p-null1}, we infer that the morphisms $s^{0, 0}$ and $s^{1, 0}$ correspond to $h^{\bullet, 0}$, and that the morphisms $s^{0, 1}$ and $s^{1,1}$ correspond to $h^{\bullet, 1}$. We omit the details. Now, the required result follows by applying Lemma~\ref{lem:p-null} twice.
\end{proof}

In what follows, we assume that the exact category $\mathcal{A}$ is Frobenius. We consider the full subcategory
$$\mathbf{F}^0(\mathcal{A}; \omega, \omega')=\mathbf{F}^0(\mathbf{F}(\mathcal{A}; \omega); \tilde{\omega}')$$
of $\mathbf{F}(\mathcal{A}; \omega, \omega')$. We mention that $X^{\bullet, \bullet}$ belongs to $\mathbf{F}^0(\mathcal{A}; \omega, \omega')$ if and only if its upper row $X^{\bullet, 1}$ is a projective object in $\mathbf{F}(\mathcal{A}; \omega)$. Consequently, the object $X^{1, 1}$ is projective in $\mathcal{A}$. Hence, the following \emph{transpose-projection functor}
\begin{align}\label{fun:tpr}
    {\rm tpr}^1\colon \mathbf{F}^0(\mathcal{A}; \omega, \omega')\longrightarrow \mathbf{F}^0(\mathcal{A};\omega'), \; X^{\bullet, \bullet} \mapsto (X^{1, 0}, X^{1,1}; d_v^{1, 0}, d_v^{1,1})
\end{align}
is well defined.

We define the full subcategory
$$\mathbf{F}^{0,0}(\mathcal{A}; \omega, \omega')$$ of $\mathbf{F}^0(\mathcal{A}; \omega, \omega')$ formed by those $X^{\bullet,\bullet}$ satisfying that $(X^{1, 0}, X^{1,1}; d_v^{1, 0}, d_v^{1,1})$ is projective in $\mathbf{F}^0(\mathcal{A};\omega')$. Therefore, in the corresponding square (\ref{diag:2-dim}) of $X^{\bullet, \bullet}$, any ($1$-dimensional) factoriztions around $X^{1, 1}$ are projective. In particular, the objects $X^{0, 1}$, $X^{1, 0}$ and $X^{1, 1}$ are all projective in $\mathcal{A}$.

Both exact categories $\mathbf{F}^0(\mathcal{A}; \omega, \omega')$ and $\mathbf{F}^{0, 0}(\mathcal{A}; \omega, \omega')$ are Frobenius. The stable category
$$\underline{\mathbf{F}}^{0,0}(\mathcal{A}; \omega, \omega')$$
coincides with the essential kernel of the induced transpose-projection functor
$${\rm tpr}^1\colon \underline{\mathbf{F}}^0(\mathcal{A}; \omega, \omega')\longrightarrow \underline{\mathbf{F}}^0(\mathcal{A};\omega').$$

\begin{rem}
(1) Similarly, we set $\mathbf{F}^0(\mathcal{A}; \omega', \omega)=\mathbf{F}^0(\mathbf{F}(\mathcal{A}; \omega'); \tilde{\omega})$.  We mention the following \emph{asymmetry}: the isomorphism in Proposition~\ref{prop:symmetry} does not restrict to an isomorphism between  $\mathbf{F}^0(\mathcal{A}; \omega, \omega')$ and $\mathbf{F}^0(\mathcal{A}; \omega', \omega)$.

(2) By exchanging $\omega$ and $\omega'$, we may also define the subcategory $\mathbf{F}^{0,0}(\mathcal{A}; \omega', \omega)$ of $\mathbf{F}^0(\mathcal{A}; \omega', \omega)$. We emphasize that the isomorphism in Proposition~\ref{prop:symmetry}  does restrict to an isomorphism, that is, a \emph{restricted symmetry}
$$\mathbf{F}^{0,0}(\mathcal{A}; \omega, \omega')\simeq \mathbf{F}^{0,0}(\mathcal{A}; \omega', \omega).$$
\end{rem}

\section{Regular sequences of length two} \label{sec:7}

In this section, we  prove Theorem~C in the case $n=2$.

Let $A$ be an arbitrary  ring, and $\omega', \omega \in A$. Recall that  a \emph{regular sequence} $(\omega', \omega)$ of length two in $A$ consists of  a regular normal element $\omega'$ in $A$ , and another element  $\omega \in A$ satisfying that the corresponding element $\bar{\omega}=\omega+(\omega')$ is regular normal in $\bar{A}=A/{(\omega')}$; see \cite{KKZ}.

Let $\sigma'$ and $\sigma$ be two automorphisms on $A$, and $\xi$ an invertible element in $A$. The triple $(\sigma', \sigma; \xi)$ is called a \emph{type} if
\begin{align}\label{equ:type}
\sigma'\sigma(a)=\xi \; (\sigma\sigma'(a)) \; \xi^{-1}, \; \mbox{ for all } a\in A.
\end{align}

\begin{defn}\label{defn:type}
Let $(\sigma', \sigma; \xi)$ be  a type.    A regular sequence $(\omega', \omega)$ is called  \emph{of type $(\sigma', \sigma; \xi)$} provided that the following conditions are satisfied:
\begin{enumerate}
    \item[(T1)] $\omega' a=\sigma'(a)\omega'$ for all $a\in A$, and $\sigma'(\omega')=\omega'$;
    \item[(T2)] $\omega a=\sigma(a)\omega$ for all $a\in A$, and $\sigma(\omega)=\omega$;
    \item[(T3)] $\sigma'(\omega)=\xi\omega$ and $\sigma(\omega')=\xi^{-1}\omega'$.
\end{enumerate}
\end{defn}

\begin{rem}
    Since $\omega'$ is regular normal, by (T1) the automorphism $\sigma'$ on $A$ is uniquely determined by $\omega'$. By (T3), the automorphism $\sigma$ on $A$ induces an automorphism $\bar{\sigma}$ on $\bar{A}$. Since $\bar{\omega}$ is regular normal in $\bar{A}$, by (T2) the induced automorphism $\bar{\sigma}$ is uniquely determined by $\bar{\omega}$. In general, we do not know the uniqueness of the automorphism $\sigma$.
\end{rem}

In what follows, we fix a regular sequence $(\omega', \omega)$ of type $(\sigma', \sigma; \xi)$. The elements $\omega$ and $\omega'$ induce natural transformations
$$\omega\colon {\rm Id}_{A\mbox{-}{\rm  Mod}}\longrightarrow {^\sigma(-)} \mbox{ and } \omega'\colon {\rm Id}_{A\mbox{-}{\rm  Mod}} \longrightarrow {^{\sigma'}(-)},$$
which are given such that $\omega_M(x)={^\sigma(\omega x)}$ and $\omega'_M(x)={^{\sigma'}(\omega'x)}$ for any $A$-module $M$  and $x\in M$. The invertible element $\xi$ induces a natural isomorphism
$$\xi\colon {^{\sigma'}(-)}{^{\sigma}(-)}= {^{\sigma\sigma'}(-)}\longrightarrow {^{\sigma'\sigma}(-)}={^{\sigma}(-)} {^{\sigma'}(-)},$$
which is given such that  $\xi_M$ sends ${^{\sigma\sigma'}(x)}$ to ${^{\sigma'\sigma}(\xi x)}$. In other words, $\omega'$ commutes with $\omega$ via $\xi$. The assumptions in Section~\ref{sec:2-dim} hold. We form the category of \emph{two-dimensional module factorizations}
$$\mathbf{F}(A; \omega, \omega')=\mathbf{F}(A\mbox{-Mod}; \omega, \omega').$$
Moreover, we obtain the Frobenius categories
$$\mathbf{GF}(A;\omega, \omega')=\mathbf{F}(A\mbox{-GProj}; \omega, \omega') \mbox{ and } \mathbf{GF}^{0,0}(A; \omega, \omega')=\mathbf{F}^{0, 0}(A\mbox{-GProj}; \omega, \omega'),$$
which consist of two-dimensional  module factorizations with Gorenstein projective components.

\begin{lem}\label{lem:Cok-2}
    Let $X^{\bullet, \bullet}=(X^{\bullet, 0}, X^{\bullet, 1}; d_v^{\bullet, 0}, d_v^{\bullet, 1})$ be an object in $\mathbf{GF}(A; \omega, \omega')$. Then the following sequence of $A$-modules
$$0\longrightarrow X^{0, 0}\xrightarrow{\begin{pmatrix} - d_h^{0,0} \\
                                                       d_v^{0, 0}
                                                       \end{pmatrix}}
        X^{1,0}\oplus X^{0, 1} \xrightarrow{\begin{pmatrix}d_v^{1, 0}& d_h^{0, 1} \end{pmatrix}} X^{1, 1}$$
    is exact; moreover, the cokernel of $\begin{pmatrix}d_v^{1, 0}& d_h^{0, 1} \end{pmatrix}$ belongs to $A/{(\omega, \omega')} \mbox{-{\rm GProj}}$.
\end{lem}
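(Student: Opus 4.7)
The plan is to peel off the vertical direction, reducing the two-dimensional statement to a one-dimensional factorization over $\bar A := A/(\omega')$.

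First I would establish injectivity at $X^{0,0}$. Since $X^{0, 0}$ is Gorenstein projective over $A$, it embeds into a projective $A$-module, on which the regular normal element $\omega'$ acts injectively by left multiplication; hence $\omega'$ acts injectively on $X^{0, 0}$. The factorization identity $d_v^{0, 1} \circ d_v^{0, 0} = \omega'_{X^{0, 0}}$ then forces $d_v^{0, 0}$ to be injective, and a fortiori the left map $\begin{pmatrix}-d_h^{0, 0}\\ d_v^{0, 0}\end{pmatrix}$ is injective. The same argument applied to $X^{1, 0}$ gives injectivity of $d_v^{1, 0}$, and short exact sequences
\[
0 \to X^{i, 0} \xrightarrow{d_v^{i, 0}} X^{i, 1} \to Y^i \to 0, \qquad i = 0, 1,
\]
where $Y^i := \mathrm{coker}(d_v^{i, 0})$ is annihilated by $\omega'$ and therefore an $\bar A$-module.

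Next I would assemble $Y^0$ and $Y^1$ into a one-dimensional $\bar\omega$-factorization. Commutativity of the bottom two squares of $X^{\bullet, \bullet}$ forces $d_h^{0, 1}$ and $d_h^{1, 1}$ to descend to morphisms $\bar d_h^0 \colon Y^0 \to Y^1$ and $\bar d_h^1 \colon Y^1 \to {^{\bar\sigma}(Y^0)}$, so $(Y^0, Y^1; \bar d_h^0, \bar d_h^1)$ is a $\bar\omega$-factorization over $\bar A$. The crucial input here is that $Y^0$ and $Y^1$ are themselves Gorenstein projective over $\bar A$; this is the one-dimensional analogue of the present lemma, a natural extension of Proposition~\ref{prop:Chen24} and Remark~\ref{rem:dense} from $\mathbf{GF}^0(A; \omega')$ to all of $\mathbf{GF}(A; \omega')$, which can be proved by reducing a totally acyclic resolution of $X^{i, 0}$ and $X^{i, 1}$ modulo $\omega'$ (using that such a reduction preserves total acyclicity, since $\omega'$ is regular). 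Granting this, $(Y^0, Y^1; \bar d_h^0, \bar d_h^1) \in \mathbf{GF}(\bar A; \bar\omega)$.

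With this one-dimensional factorization in hand, the remaining two assertions come out in parallel. Since $\bar\omega$ is regular normal in $\bar A$ and $Y^0 \in \bar A\mbox{-GProj}$, the injectivity argument of the first paragraph applies to $\bar d_h^0 \colon Y^0 \to Y^1$. Given $(a, b)$ with $d_v^{1, 0}(a) + d_h^{0, 1}(b) = 0$, the image $\bar b \in Y^0$ of $b$ satisfies $\bar d_h^0(\bar b) = 0$ (since $d_h^{0, 1}(b) = -d_v^{1, 0}(a) \in \mathrm{im}(d_v^{1, 0})$ maps to $0$ in $Y^1$), so $\bar b = 0$ by injectivity; hence $b = d_v^{0, 0}(x)$ for some $x \in X^{0, 0}$, and injectivity of $d_v^{1, 0}$ applied to $a + d_h^{0, 0}(x)$ forces $a = -d_h^{0, 0}(x)$. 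Finally, $\mathrm{coker}\begin{pmatrix}d_v^{1, 0} & d_h^{0, 1}\end{pmatrix} = Y^1/\bar d_h^0(Y^0) = \mathrm{coker}(\bar d_h^0)$, and one more application of the GP-cokernel principle inside $\mathbf{GF}(\bar A; \bar\omega)$ identifies this as an object of $\bar A/(\bar\omega)\mbox{-GProj} = B\mbox{-GProj}$.

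The main obstacle is the technical claim underlying the second paragraph: that the cokernel of every object of $\mathbf{GF}(A; \omega')$, not only of its subcategory $\mathbf{GF}^0(A; \omega')$, is Gorenstein projective over $\bar A$. Once this one-dimensional GP-cokernel fact is available, the two-dimensional lemma follows from the ``take vertical cokernels'' construction above, applied twice (once over $A$ to produce $Y^0, Y^1$, and once over $\bar A$ to handle the final cokernel).
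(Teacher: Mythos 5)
Your proof follows the same reduction strategy as the paper's: use regularity of $\omega'$ on Gorenstein projective $A$-modules to show that $d_v^{0,0}$ and $d_v^{1,0}$ are injective, pass to vertical cokernels to obtain a one-dimensional $\bar\omega$-factorization $(Y^0, Y^1)$ over $\bar A = A/(\omega')$, and then deduce exactness and Gorenstein-projectivity of the total cokernel by applying the one-dimensional GP-cokernel principle twice (once over $A$, once over $\bar A$). You have also correctly located the crux: one needs to know that for \emph{any} $X \in \mathbf{GF}(A;\omega')$ — not merely $X \in \mathbf{GF}^0(A;\omega')$ as in Proposition~\ref{prop:Chen24} and Remark~\ref{rem:dense} — the cokernel $\mathrm{Cok}^0(X)$ is Gorenstein projective over $\bar A$. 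The paper obtains this by citing Theorem~2.13 of \cite{Chen24}, which is exactly the fact you flagged as the missing ingredient.

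Where your sketch falls short is the proposed proof of that key fact. Reducing a totally acyclic complex of projective $A$-modules modulo $\omega'$ does not directly yield a totally acyclic complex of $\bar A$-modules witnessing $Y^i \in \bar A\mbox{-GProj}$: the module $Y^i = \mathrm{coker}(d_v^{i,0})$ is a quotient of $X^{i,1}$, not a cocycle of $P^\bullet/\omega' P^\bullet$, and in general the reduced complex will have extra homology that is only controlled by the factorization data $(d_v^{i,0}, d_v^{i,1})$. Any honest proof of the GP-cokernel principle for the full category $\mathbf{GF}(A;\omega')$ must use this factorization structure essentially — e.g.\ by unrolling $X$ into a 2-periodic complex and gluing with complete resolutions, or by passing through the twisted matrix ring as in Proposition~\ref{prop:M-2}. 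Since the paper also does not reprove this fact but delegates it to the companion reference, your decision to isolate it as the technical obstacle is the right instinct; only your proposed proof of it is too coarse to go through as stated.
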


We define the \emph{total-cokernel} ${\rm TCok}(X^{\bullet, \bullet})$ of $X^{\bullet, \bullet}$ to be the cokernel of $\begin{pmatrix}d_v^{1, 0}& d_h^{0, 1} \end{pmatrix}$.

\begin{proof}
Due to the following commutative square,  the sequence above is a complex.
\[\xymatrix{
X^{0, 1} \ar[rr]^-{d_h^{0, 1}} && X^{1,1}\\
X^{0,0} \ar[u]^-{d_v^{0, 0}} \ar[rr]^-{d_h^{0,0}} && X^{1, 0} \ar[u]_-{d_v^{1, 0}}
}\]
The element $\omega'$ is regular in $A$. Consequently,  $\omega'$ is a non-zerodivisor on any Gorenstein projective $A$-module. Both $d_v^{0, 0}$ and $d_v^{1, 0}$ fit into module factorizations in $\mathbf{GF}(A; \omega')$. It follows that they are monomorphisms. Consequently, the complex above is quasi-isomorphic to the following one
$$\bar{X}^{0, 1} \xrightarrow{\bar{d}_h^{0, 1}}   \bar{X}^{1, 1},$$
which is induced from $d_{h}^{0, 1}$. Here, $\bar{X}^{0, 1}$ is the cokernel of $d_v^{0, 0}$,  and $\bar{X}^{1, 1}$ is the cokernel of $d_v^{1,0}$. By \cite[Theorem~2.13]{Chen24}, both $\bar{X}^{0, 1}$  and $\bar{X}^{1, 1}$ belongs to $\bar{A}\mbox{-GProj}$ with $\bar{A}=A/{(\omega')}$. The morphism $\bar{d}_h^{0, 1}$ fits into a module factorization in $\mathbf{GF}(\bar{A};  \bar{\omega})$. Since $\bar{\omega}$ is regular in $\bar{A}$, we infer that $\bar{d}_h^{0, 1}$ is a monomorphism; moreover, by  \cite[Theorem~2.13]{Chen24} again, its cokernel belongs to $A/{(\omega, \omega')} \mbox{-{\rm GProj}}$. Then the required results follow immediately.
\end{proof}

By Lemma~\ref{lem:Cok-2}, we infer that the following  total-cokernel functor
$${\rm TCok}\colon \mathbf{GF}(A; \omega, \omega') \longrightarrow  A/{(\omega, \omega')} \mbox{-{\rm GProj}}, \; X^{\bullet, \bullet} \mapsto {\rm Cok}\begin{pmatrix}d_v^{1, 0}& d_h^{0, 1} \end{pmatrix}$$
is well defined. Moreover, it is exact and sends projective objects to projective modules. Therefore, it induces a triangle functor
$${\rm TCok}\colon \underline{\mathbf{GF}}(A; \omega, \omega')\longrightarrow  A/{(\omega, \omega')} \mbox{-\underline{\rm GProj}}.$$

\begin{thm}\label{thm:2-dim}
     Let  $(\omega', \omega)$ be a regular sequence of type $(\sigma', \sigma; \xi)$ in $A$. Set $B=A/{(\omega, \omega')}$. Then  the functor ${\rm TCok}$ above restricts to a triangle equivalence
     $${\rm TCok}\colon \underline{\mathbf{GF}}^{0, 0}(A; \omega, \omega') \stackrel{\sim}\longrightarrow  B\mbox{-\underline{\rm GProj}}.$$
     When $A$ is left noetherian, the equivalence above restricts further to a triangle equivalence
     $${\rm TCok}\colon \underline{\mathbf{MF}}^{0, 0}(A; \omega, \omega') \stackrel{\sim}\longrightarrow  B\mbox{-\underline{\rm Gproj}}^{<+\infty}.$$
\end{thm}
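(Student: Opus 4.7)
The strategy is to iterate the one-dimensional result (Proposition~\ref{prop:Chen24}) twice, using the twisted matrix ring machinery of Section~\ref{sec:modf} as the bridge. The key idea is that a two-dimensional factorization over $A$ is the same as a one-dimensional factorization in the category of one-dimensional factorizations, and the latter is equivalent to a module category over the twisted matrix ring $\Gamma = M_2(A;\omega,\sigma)$.

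\textbf{Step 1 (Lifting to $\Gamma$).} By Proposition~\ref{prop:M-2}, there is an equivalence $\mathbf{F}(A;\omega)\simeq\Gamma\text{-Mod}$ that restricts to $\mathbf{GF}(A;\omega)\simeq \Gamma\text{-GProj}$ and $\mathbf{GF}^0(A;\omega)\simeq \Gamma\text{-GProj}^{e_{11}}$, with $e_{11}$ GP-compatible. Under this equivalence the autoequivalence $\tilde{T}'$ and the natural transformation $\tilde{\omega}'$ of Subsection~\ref{subsec:6.1} correspond to twisting by an automorphism $\tilde{\sigma}'$ on $\Gamma$ and to left multiplication by the diagonal element $\tilde{\omega}' := \mathrm{diag}(\omega',\omega')\in\Gamma$. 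The conditions (T1)--(T3) are precisely what is needed to guarantee that $\tilde{\sigma}'$ is a ring automorphism of $\Gamma$ (with $\xi$ entering through the off-diagonal multiplication formula) and that $\tilde{\omega}'$ is regular and normal in $\Gamma$ with respect to $\tilde{\sigma}'$. This yields an equivalence $\mathbf{F}(A;\omega,\omega')\simeq \mathbf{F}(\Gamma;\tilde{\omega}')$, which by tracing the definitions restricts to $\mathbf{GF}^{0,0}(A;\omega,\omega')\simeq \mathbf{GF}^0(\Gamma;\tilde{\omega}')^{e_{11}}$.

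\textbf{Step 2 (Apply the one-variable result twice).} Since $\tilde{\omega}'$ is regular normal in $\Gamma$ and $e_{11}$ is GP-compatible with $\tilde{\omega}'e_{11}=e_{11}\tilde{\omega}'$, Proposition~\ref{prop:compa} gives a triangle equivalence
$$
{\rm Cok}^0\colon \underline{\mathbf{GF}}^0(\Gamma;\tilde{\omega}')^{e_{11}}\stackrel{\sim}{\longrightarrow}\bar{\Gamma}\text{-}\underline{\rm GProj}^{\bar{e}_{11}},
$$
where $\bar{\Gamma}=\Gamma/(\tilde{\omega}')$. A direct computation with the multiplication rule of Section~\ref{sec:modf} gives a ring isomorphism $\bar{\Gamma}\simeq M_2(\bar{A};\bar{\omega},\bar{\sigma})$, where $\bar{A}=A/(\omega')$ and $\bar{\omega},\bar{\sigma}$ are the induced data; here the regular-sequence hypothesis ensures that $\bar{\omega}$ is regular normal in $\bar{A}$. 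Applying Proposition~\ref{prop:M-2} to $(\bar{A},\bar{\omega})$ identifies $\bar{\Gamma}\text{-GProj}^{\bar{e}_{11}}$ with $\mathbf{GF}^0(\bar{A};\bar{\omega})$, and Proposition~\ref{prop:Chen24} for $\bar{\omega}\in\bar{A}$ yields $\underline{\mathbf{GF}}^0(\bar{A};\bar{\omega})\simeq B\text{-}\underline{\rm GProj}$ since $\bar{A}/(\bar{\omega})=B$. Composing these equivalences produces the desired triangle equivalence $\underline{\mathbf{GF}}^{0,0}(A;\omega,\omega')\simeq B\text{-}\underline{\rm GProj}$.

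\textbf{Step 3 (Identify the composite with ${\rm TCok}$ and treat the finite case).} On $X^{\bullet,\bullet}$ the outer ${\rm Cok}^0$ takes vertical cokernels, producing an $\bar{\omega}$-factorization of $\bar{A}$-modules with terms $\mathrm{Cok}(d_v^{\bullet,0})$; the inner ${\rm Cok}^0$ then takes the horizontal cokernel. The result coincides with the cokernel of $(d_v^{1,0},\,d_h^{0,1})\colon X^{1,0}\oplus X^{0,1}\to X^{1,1}$, namely $\mathrm{TCok}(X^{\bullet,\bullet})$ by Lemma~\ref{lem:Cok-2}. For the noetherian case, one tracks finite generation and finite projective dimension through the same chain: an object of $\mathbf{MF}^{0,0}(A;\omega,\omega')$ has $X^{1,1},X^{1,0},X^{0,1}$ in $A\text{-proj}$, and the successive cokernels translate this into the condition that $\mathrm{TCok}(X^{\bullet,\bullet})\in B\text{-Gproj}$ has finite projective dimension as an $A$-module, giving the restricted equivalence with $B\text{-}\underline{\rm Gproj}^{<+\infty}$.

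\textbf{Main obstacle.} The technical heart of the argument is Step 1: correctly lifting $\omega'$ and $\sigma'$ from $A$ to a regular normal element and a compatible automorphism on the noncommutative ring $\Gamma$, where the relation $\sigma'(\omega)=\xi\omega$ from (T3) is precisely what makes conjugation by $\tilde{\omega}'$ well-defined as a ring automorphism on off-diagonal entries. A secondary subtle point is matching the two superscripts ``$0$'' in $\mathbf{GF}^{0,0}(A;\omega,\omega')$ with the combined condition $\mathbf{GF}^0(\Gamma;\tilde{\omega}')^{e_{11}}$ on the $\Gamma$-side, which must be unravelled componentwise to be sure the projectivity hypotheses on the three faces around $X^{1,1}$ match exactly.
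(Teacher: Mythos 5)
Your proposal is correct and follows essentially the same route as the paper: use $\Phi\colon \mathbf{GF}(A;\omega)\simeq\Gamma\text{-GProj}$ with $\Gamma=M_2(A;\omega,\sigma)$ to turn the outer $\tilde\omega'$-factorization into a module factorization over $\Gamma$, identify $\Gamma/(\omega'I_2)$ with $M_2(\bar A;\bar\omega,\bar\sigma)$, then peel off the remaining factorization via Propositions~\ref{prop:M-2} and~\ref{prop:Chen24}. The only stylistic difference is that you invoke Proposition~\ref{prop:compa} directly in Step~2, whereas the paper re-derives its content in place through the commutative square of Lemma~\ref{lem:kappa} (which is exactly the compatibility $\kappa\circ\Phi\tilde\omega'=(\omega'I_2)\Phi$ you tacitly rely on in Step~1); since Proposition~\ref{prop:compa} encapsulates precisely that argument, the two are the same in substance.
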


Here, $\underline{\mathbf{MF}}^{0, 0}(A; \omega, \omega')=\mathbf{F}^{0, 0}(A\mbox{-proj}; \omega, \omega')$ and $B\mbox{-Gproj}^{<+\infty}$ denotes the category of finitely generated Gorenstein projective $B$-modules which have finite projective dimension as $A$-modules.

We make some preparation for the proof. Recall the twisted matrix ring $\Gamma=M_2(A; \omega, \sigma)$. The automorphism $\sigma'$ on $A$ induces an automorphism
$$\tilde{\sigma}'\colon \Gamma \longrightarrow \Gamma, \; \begin{pmatrix} a_{11} & a_{12} \\
                                                                    a_{21} & a_{22}\end{pmatrix} \mapsto \begin{pmatrix} \sigma'(a_{11}) & \sigma'(a_{12}) \\
                                                                    \sigma'(a_{21})\xi & \sigma'(a_{22})\end{pmatrix}. $$
For the verification, we use (\ref{equ:type}) and Definition~\ref{defn:type}(T3). The diagonal matrix $\omega' I_2={\rm diag}(\omega', \omega')$ is a regular normal element in $\Gamma$. Indeed, we have
$$(\omega' I_2)x=\tilde{\sigma}'(x) (\omega' I_2)$$
for each element $x\in \Gamma$. Consequently, the element $\omega'I_2$ in $\Gamma$ induces a natural transformation
\begin{align}\label{nat:I-2}
    \omega' I_2\colon {\rm Id}_{\Gamma \mbox{-} {\rm Mod} } \longrightarrow {^{\tilde{\sigma}'}(-)}.
\end{align}

Recall from Subsection~\ref{subsec:6.1} that the natural transformation
$$\omega'\colon {\rm Id}_{A \mbox{-} {\rm Mod}} \longrightarrow {^{\sigma'}(-)}=T'$$
extends to a new natural transformation
\begin{align}\label{nat:omega'}
\tilde{\omega}'\colon  {\rm Id}_{\mathbf{F}(A; \omega)} \longrightarrow \tilde{T}'.
\end{align}
 By the proof of Proposition~\ref{prop:M-2}, we have an explicit equivalence
$$\Phi\colon \mathbf{F}(A; \omega) \stackrel{\sim}\longrightarrow \Gamma\mbox{-Mod}.$$

The following lemma shows that the two natural transformations (\ref{nat:I-2}) and (\ref{nat:omega'}) are compatible.
\begin{lem}\label{lem:kappa}
    There is a natural isomorphism $\kappa \colon \Phi \tilde{T}'\rightarrow {^{\tilde{\sigma}'}(-)}\Phi$ such that
    $$\kappa \circ \Phi \tilde{\omega}'=(\omega' I_2)\Phi.$$
\end{lem}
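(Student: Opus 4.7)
The plan is to write down $\kappa_X$ explicitly on the underlying abelian group, check that it intertwines the two $\Gamma$-actions (this is where $\xi$ enters on both sides and has to cancel), and then read off the compatibility with $\tilde{\omega}'$ and $\omega' I_2$ from the definitions.

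First, I would fix an $\omega$-factorization $X=(X^0,X^1;d_X^0,d_X^1)$ and note that, as abelian groups,
$$\Phi\tilde{T}'(X)=\begin{pmatrix}{}^{\sigma'}(X^1)\\ {}^{\sigma'}(X^0)\end{pmatrix},\qquad {}^{\tilde{\sigma}'}\Phi(X)=\begin{pmatrix}X^1\\ X^0\end{pmatrix},$$
the second one being $\Phi(X)$ with a twisted $\Gamma$-action. I would then define
$$\kappa_X\colon \Phi\tilde{T}'(X)\longrightarrow {}^{\tilde{\sigma}'}\Phi(X),\qquad \begin{pmatrix}{}^{\sigma'}(x^1)\\ {}^{\sigma'}(x^0)\end{pmatrix}\longmapsto \begin{pmatrix}x^1\\ x^0\end{pmatrix},$$
i.e.\ the canonical abelian-group identification that strips the outer $\sigma'$-twist. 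Naturality in $X$ is immediate, since any morphism $f=(f^0,f^1)\colon X\to Y$ in $\mathbf{F}(A;\omega)$ has $\tilde{T}'(f)$ acting componentwise via the same underlying set maps as $f$.

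The substantive step is to verify that $\kappa_X$ is $\Gamma$-linear, which amounts to comparing two $(2,1)$-entries. Applying the $\Gamma$-action on the left-hand side, the second component involves $d_{\tilde{T}'(X)}^1=\xi_{X^0}\circ T'(d_X^1)$; using the explicit formula $\xi_{X^0}({}^{\sigma\sigma'}(y))={}^{\sigma'\sigma}(\xi y)$ from the definition of $\xi$, a short computation gives
$$a_{21}\,\delta_{T'(X^0)}\bigl(d_{\tilde{T}'(X)}^1({}^{\sigma'}(x^1))\bigr)={}^{\sigma'}\!\bigl(\sigma'(a_{21})\,\xi\, y\bigr),$$
where $d_X^1(x^1)={}^{\sigma}(y)$. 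Applying the twisted action on the right-hand side produces $\sigma'(a_{21})\xi\,y$ in the second slot, because the $(2,1)$-entry of $\tilde{\sigma}'\begin{pmatrix}a_{11}&a_{12}\\ a_{21}&a_{22}\end{pmatrix}$ is exactly $\sigma'(a_{21})\xi$ by Definition~\ref{defn:type}(T3) and (\ref{equ:type}). The two computations agree after applying $\kappa_X$. The other three entries are easier and use only that $\tilde{\sigma}'$ acts by $\sigma'$ on the remaining matrix coefficients; the main obstacle is precisely this $(2,1)$-check, since it is the unique place where both occurrences of $\xi$ (one from the definition of $\tilde{T}'$ via the natural isomorphism $\xi$, and one from the definition of $\tilde{\sigma}'$ via the type condition) interact, and their cancellation is the whole reason the twisted matrix ring $\Gamma$ was set up with that exact multiplication rule.

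Finally, the identity $\kappa\circ\Phi\tilde{\omega}'=(\omega'I_2)\Phi$ is a direct unwinding. By definition, $\tilde{\omega}'_X=(\omega'_{X^0},\omega'_{X^1})$, so $\Phi(\tilde{\omega}'_X)$ sends $\begin{pmatrix}x^1\\ x^0\end{pmatrix}$ to $\begin{pmatrix}{}^{\sigma'}(\omega' x^1)\\ {}^{\sigma'}(\omega' x^0)\end{pmatrix}$, which $\kappa_X$ then takes to $\begin{pmatrix}\omega' x^1\\ \omega' x^0\end{pmatrix}$. This is visibly the action of $\omega' I_2$ on $\Phi(X)$ landing in ${}^{\tilde{\sigma}'}\Phi(X)$, so the equality holds object-wise, and naturality of both sides gives the equality of natural transformations.
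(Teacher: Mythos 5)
Your proposal is correct and follows the same approach as the paper: define $\kappa_X$ as the canonical abelian-group identification stripping the outer $\sigma'$-twist, then check $\Gamma$-linearity and the compatibility with $\tilde{\omega}'$ by direct computation. The paper's proof simply calls this identification canonical and the verification trivial; you have spelled out exactly the $(2,1)$-entry computation where the two appearances of $\xi$ cancel, which is the nontrivial part the paper leaves implicit.
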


\begin{proof}
For each $\omega$-factorization $X=(X^0, X^1; d_X^0, d_X^1)$, $\kappa_X$ identifies $\begin{pmatrix}
   {^{\sigma'}(X^1)} \\ {^{\sigma'}(X^0)}
\end{pmatrix}$ with ${^{\tilde{\sigma}'}{\begin{pmatrix}
   X^1 \\ X^0
\end{pmatrix}}}$ in a canonical manner. The remaining verification is trivial.
\end{proof}

Recall that $\bar{A}=A/{(\omega')}$.  We observe a canonical isomorphism of rings.
\begin{align}\label{iso:Gamma}
    \Gamma/{(\omega'I_2)} \stackrel{\sim} \longrightarrow M_2(\bar{A}; \bar{\omega}, \bar{\sigma})
\end{align}
Moreover, the idempotent $e_{11}=\begin{pmatrix}
    1 & 0\\
    0 & 0
\end{pmatrix}$ in $\Gamma$ corresponds to the one $\bar{e}_{11}=\begin{pmatrix}
    \bar{1} & 0\\
    0 & 0
\end{pmatrix}$ in $M_2(\bar{A}; \bar{\omega}, \bar{\sigma})$.  In view of Proposition~\ref{prop:M-2}, the idempotent $\bar{e}_{11}$ is GP-compatible.

\vskip 5pt

\noindent \emph{Proof of Theorem~\ref{thm:2-dim}.}\quad The pair $(\Phi, \kappa)$ in Lemma~\ref{lem:kappa} induces the triangle equivalence $\Phi_*$ in the upper row of the following square.
\[\xymatrix{
\underline{\mathbf{F}}^0(\mathbf{GF}(A; \omega); \tilde{\omega}') \ar[d]_-{{\rm tpr}^1} \ar[rr]^-{\Phi_*} && \underline{\mathbf{GF}}^0(\Gamma; \omega' I_2) \ar[rr]^-{\rm Cok} && M_2(\bar{A}; \bar{\omega}, \bar{\sigma})\mbox{-\underline{GProj}} \ar[d]^-{S_{\bar{e}_{11}}} \\
\underline{\mathbf{GF}}^0(A; \omega') \ar[rrrr]^{\rm Cok} &&&& \bar{A}\mbox{-\underline{GProj}}
}\]
Here, ${\rm tpr}^1$ denotes the transpose-projection functor (\ref{fun:tpr}) and $S_{\bar{e}_{11}}$ is the Schur functor  associated to $\bar{e}_{11}$.    For the upper cokernel functor, we use Proposition~\ref{prop:Chen24} and the ring isomorphism (\ref{iso:Gamma}). The square above commutes up to a natural isomorphism.

Taking the essential kernels of the vertical functors, we obtain a restricted triangle equivalence induced by ${\rm Cok}\circ \Phi_*$.
\begin{align}\label{equiv:1}
    {\rm Cok}\circ \Phi_*\colon \underline{\mathbf{GF}}^{0, 0}(A; \omega, \omega') \stackrel{\sim}\longrightarrow M_2(\bar{A}; \bar{\omega}, \bar{\sigma})\mbox{-}\underline{\rm GProj}^{\bar{e}_{11}}
\end{align}

Applying Proposition~\ref{prop:M-2}, we have another triangle equivalence
\begin{align}\label{equiv:2}
   M_2(\bar{A}; \bar{\omega}, \bar{\sigma})\mbox{-}\underline{\rm GProj}^{\bar{e}_{11}}\simeq \underline{\mathbf{GF}}^0(\bar{A}; \bar{\omega}).
\end{align}
Finally, by Proposition~\ref{prop:Chen24} we have a triangle equivalence
\begin{align}\label{equiv:3}
    {\rm Cok}\colon \underline{\mathbf{GF}}^0(\bar{A}; \bar{\omega})\stackrel{\sim}\longrightarrow B\mbox{-}\underline{\rm GProj}.
\end{align}
Here, we identify $B$ with $\bar{A}/{(\bar{\omega})}$.

We combine the three equivalences  (\ref{equiv:1})-(\ref{equiv:3}) above and obtain a composite  triangle equivalence
$$\underline{\mathbf{GF}}^{0, 0}(A; \omega, \omega')\stackrel{\sim}\longrightarrow B\mbox{-}\underline{\rm GProj}.$$
We use ${\rm Cok}$  in both (\ref{equiv:1}) and (\ref{equiv:3}). The two cokernel functors  give rise to the required  total-cokernel functor ${\rm TCok}$. In other words, the composite functor above coincides with ${\rm TCok}$. This completes the proof of the first equivalence.

Assume that $A$ is left noetherian. The same argument as above yields an equivalence
$${\rm TCok}\colon \underline{\mathbf{F}}^{0, 0}(A\mbox{-Gproj}; \omega, \omega') \stackrel{\sim}\longrightarrow  B\mbox{-\underline{\rm Gproj}}.$$
Take any object $X^{\bullet, \bullet} \in \mathbf{F}^{0, 0}(A\mbox{-Gproj}; \omega, \omega')$. From the very definition, we observe that the components $X^{0, 1}$, $X^{1, 0}$ and $X^{1,1}$ belong to $A\mbox{-proj}$. By Lemma~\ref{lem:Cok-2}, ${\rm TCok}(X^{\bullet, \bullet})$ has finite projective dimension as an $A$-module if and only if so does $X^{0, 0}$. Since the $A$-module $X^{0, 0}$ is Gorenstein projective, it is projective if and only if it has finite projective dimension. Consequently, ${\rm TCok}(X^{\bullet, \bullet})$ belongs to $B\mbox{-Gproj}^{<+\infty}$ if and only if $X^{0, 0}$ belongs to $A\mbox{-proj}$, or equivalently, $X^{\bullet, \bullet}$ belongs to $\mathbf{MF}^{0, 0}(A; \omega, \omega')$. This implies  the second  equivalence.
\hfill $\square$

\vskip 5pt

We apply Theorem~\ref{thm:2-dim} to a concrete example.

\begin{exm}
{\rm Let $k$ be a field and $A=k[x, y]$ be the polynomial algebra in  two variables. Let $f(t)\in k[t]$  be a nonzero polynomial in one variable. Then $(f(x), f(y))$ is a regular sequence in $A$, which is certainly of type $({\rm Id}_A, {\rm Id}_A; 1_A)$. Set $B=A/(f(x), f(y))$ to be the quotient algebra. Since $A$ has finite global dimension and $B$ is a finite dimensional selfinjective algebra, we have $B\mbox{-Gproj}^{<+\infty}=B\mbox{-mod}$. Consequently, we have a triangle equivalence
$${\rm TCok}\colon \underline{\mathbf{MF}}^{0, 0}(A; f(x), f(y)) \stackrel{\sim}\longrightarrow  B\mbox{-\underline{\rm mod}}.$$

Assume that $f(y)-f(x)=(y-x)\Delta$ with $\Delta=\Delta(x, y)$ a polynomial in $x$ and $y$.  We have a concrete two-dimensional matrix factorization  $X^{\bullet, \bullet}$ of $(f(x), f(y))$ as follows.
\[\xymatrix{
A\oplus A \ar@<+.7ex>[rrrr]^-{\begin{small}\begin{pmatrix} f(x) & -\Delta\\
 0 & 1\end{pmatrix}\end{small}}    \ar@<+.7ex>[ddd]^-{\begin{small}\begin{pmatrix} f(x) & -\Delta \\ y-x & 1\end{pmatrix}\end{small}} &&&& A\oplus A \ar@<+.7ex>[llll]^-{\begin{small}\begin{pmatrix} 1 & \Delta \\ 0 & f(x) \end{pmatrix}\end{small}}  \ar@<+.7ex>[ddd]^-{\begin{small}\begin{pmatrix}1 & 0 \\ 0 & f(y)\end{pmatrix} \end{small}}\\ \\ \\
 A\oplus A \ar@<+.7ex>[rrrr]^-{\begin{small}\begin{pmatrix} 1 & 0\\ x-y & f(x) \end{pmatrix}\end{small}} \ar@<+.7ex>[uuu]^-{\begin{small}\begin{pmatrix} 1 & \Delta\\ x-y & f(x)\end{pmatrix}\end{small}}  &&&& A\oplus A  \ar@<+.7ex>[uuu]^-{\begin{small}\begin{pmatrix}f(y) & 0\\ 0 & 1 \end{pmatrix} \end{small} }\ar@<+.7ex>[llll]^-{\begin{small}\begin{pmatrix}f(x) & 0\\ y-x & 1 \end{pmatrix}\end{small}}
}\]
The matrix factorization $X^{\bullet, \bullet}$ above belongs to $\mathbf{MF}^{0, 0}(A; f(x), f(y))$, whose total-cokernel is isomorphic to the $B$-module $M=B/(x-y)$. We trace the proof of Theorem~\ref{thm:2-dim} and use the computation in \cite[Section~4]{Chen24} to obtain $X^{\bullet, \bullet}$. Applying Lemma~\ref{lem:Cok-2} to $X^{\bullet, \bullet}$, we obtain a projective resolution of $M$ as an $A$-module, which is clearly non-minimal.

Set $\Lambda=k[t]/(f(t))$. Then $B$ is isomorphic to the enveloping algebra $\Lambda\otimes_k \Lambda^{\rm op}$ of $\Lambda$. Therefore, we identify $B$-modules with $\Lambda$-$\Lambda$-bimodules.  The $B$-module $M$ above corresponds to the regular $\Lambda$-$\Lambda$-bimodule $\Lambda$. The consideration here might be related to the study of Hochschild cohomology of $\Lambda$; see \cite{THolm}.
}\end{exm}

\section{The general case}\label{sec:8}

In this section, we extend Theorem~\ref{thm:2-dim} to the general case by induction.

Let $\mathcal{A}$ be any category. Assume that $\{T_i\;|\; 1\leq i\leq n\}$ is  a family of autoequivalences on $\mathcal{A}$. For each $i$, we fix a natural transformation $\omega_i\colon {\rm Id}_\mathcal{A} \rightarrow T_i$ satisfying  $T_i \omega_i=\omega_i T_i$. For any $1\leq i<j\leq n$, there is a natural isomorphism
$$\xi_{ij}\colon T_i T_j \stackrel{\sim}\longrightarrow T_j T_i$$
satisfying
$$\omega_j T_i= \xi_{ij}\circ T_i\omega_j \mbox{ and } T_j \omega_i=\xi_{ij}\circ \omega_i T_j.$$

By induction and using the data $\{T_i, \omega_i, \xi_{ij}\}_{1\leq i<j\leq n}$, we will define the category $\mathbf{F}(\mathcal{A}; \omega_1, \cdots, \omega_n)$.

We first form the category $\mathbf{F}(\mathcal{A}; \omega_n)$ of $\omega_n$-factorizations. For any $1\leq i\leq n-1$, by Subsection~\ref{subsec:6.1} the natural transformation $\omega_i$ induces a new one
$$\tilde{\omega}_i\colon {\rm Id}_{\mathbf{F}(\mathcal{A}; \omega_n)}\longrightarrow \tilde{T}_i.$$
Moreover, the natural isomorphism $\xi_{ij}$ also induces a new one
$$\tilde{\xi}_{i j}\colon \tilde{T}_i \tilde{T}_j \stackrel{\sim}\longrightarrow \tilde{T}_j \tilde{T}_i$$
for $1\leq i<j \leq n-1$. Now, using the new data $\{\tilde{T}_i,\tilde{\omega}_i, \tilde{\xi}_{ij}\}_{1\leq i<j\leq n-1}$, we define inductively
$$\mathbf{F}(\mathcal{A}; \omega_1, \cdots, \omega_n):=\mathbf{F}(\mathbf{F}(\mathcal{A}; \omega_n); \tilde{\omega}_1, \cdots, \tilde{\omega}_{n-1}). $$

We mention that any object in $\mathbf{F}(\mathcal{A}; \omega_1, \cdots, \omega_n)$ is viewed as an \emph{$n$-dimensional factorization}, which might be visualized a commutative $n$-cube $X$ with curved arrows. To be more precise,  the underlying object is given by a map
$$X\colon \{0, 1\}^{n}\longrightarrow \mathcal{A}, \; \alpha \mapsto X^\alpha. $$
Each edge in  direction $i$ is given by an $\omega_i$-factorization:
\[\xymatrix{
X^\alpha \xrightarrow{d_i^{\alpha}}  X^{\alpha+\epsilon_i} \xrightarrow{d_i^{\alpha+\epsilon_i}} T_i(X^\alpha),  \mbox{  or  }  X^{\alpha}\ar@<+.7ex>[rr]^-{d_i^\alpha} && X^{\alpha+\epsilon_i} \ar@<+.7ex>@{~>}[ll]^-{d_i^{\alpha+\epsilon_i}}.
}\]
Here, the $i$-th component of $\alpha$ is assumed to be zero. The $n$-dimensional factorization is written as
$$X=(X^\alpha; d_i)_{\alpha\in \{0, 1\}^n, 1\leq i\leq n}.$$

Assume now that $\mathcal{A}$ is a Frobenius exact category. By applying Proposition~\ref{prop:Frobenius} repeatedly, we infer that $\mathbf{F}(\mathcal{A}; \omega_1, \cdots, \omega_n)$ is a Frobenius exact category. Its stable category is denoted by $\underline{\mathbf{F}}(\mathcal{A}; \omega_1, \cdots, \omega_n)$. By generalizing Proposition~\ref{prop:symmetry}, we infer that the construction does not depend on the order of $\omega_i$'s.

By induction on $n$, we will define a very subtle subcategory $\mathbf{F}^{0^n}(\mathcal{A}; \omega_1, \cdots, \omega_n)$. Here, we write $0^n$ for $0, 0,\cdots , 0$ with $n$ copies of zeros.

The cases $n=1$ and $n=2$ are already done. In general, we have the obvious \emph{transpose-projection functor}
$${\rm tpr}^1\colon \mathbf{F}(\mathbf{F}(\mathcal{A}, \omega_n); \tilde{\omega}_1, \cdots, \tilde{\omega}_{n-1})=\mathbf{F}(\mathcal{A}; \omega_1, \cdots, \omega_n)\longrightarrow \mathbf{F}(\mathcal{A}; \omega_1, \cdots, \omega_{n-1}),$$
which sends $X$ to the $(n-1)$-dimensional facet $\{X^{(a_1, \cdots, a_{n-1}, 1)}\; |\; a_i=0,1\}$. It restricts to
$${\rm tpr}^1\colon \mathbf{F}^{0^{n-1}}(\mathbf{F}(\mathcal{A}, \omega_n); \tilde{\omega}_1, \cdots, \tilde{\omega}_{n-1}) \longrightarrow \mathbf{F}^{0^{n-1}}(\mathcal{A}; \omega_1, \cdots, \omega_{n-1}).$$
 The subcategory $\mathbf{F}^{0^n}(\mathcal{A}; \omega_1, \cdots, \omega_n)$ of $\mathbf{F}^{0^{n-1}}(\mathbf{F}(\mathcal{A}, \omega_n); \tilde{\omega}_1, \cdots, \tilde{\omega}_{n-1})$ is defined such that its stable category $\underline{\mathbf{F}}^{0^n}(\mathcal{A}; \omega_1, \cdots, \omega_n)$ is the essential kernel of
$${\rm tpr}^1\colon \underline{\mathbf{F}}^{0^{n-1}}(\mathbf{F}(\mathcal{A}, \omega_n); \tilde{\omega}_1, \cdots, \tilde{\omega}_{n-1})\longrightarrow \underline{\mathbf{F}}^{0^{n-1}}(\mathcal{A}; \omega_1, \cdots, \omega_{n-1}).$$

In what follows, we write ${\bf 1}=(1, 1, \cdots, 1)\in \{0, 1\}^n$.

\begin{rem}
    By induction on $n$, one shows that for any $X$ in $\mathbf{F}(\mathcal{A}; \omega_1, \cdots, \omega_n)$, it belongs to $\mathbf{F}^{0^n}(\mathcal{A}; \omega_1, \cdots, \omega_n)$ if and only if each of its $n$ facets containing $X^{\bf 1}$ is a projective object in the relevant category of $(n-1)$-dimensional factorizations. Therefore, the definition of  $\mathbf{F}^{0^n}(\mathcal{A}; \omega_1, \cdots, \omega_n)$ is \emph{symmetric}, that is, does not depend on the order of these $\omega_i$'s.
\end{rem}

Let $A$ be an arbitrary ring. Recall that a \emph{regular sequence} in $A$ means a sequence $(\omega_1, \cdots, \omega_n)$ of elements in $A$ such that $\omega_1$ is regular normal in $A$, and the sequence $(\bar{\omega}_2, \cdots, \bar{\omega}_n)$ is regular in the quotient ring $A/{(\omega_1)}$.

Let $(\sigma_1, \cdots, \sigma_n)$  be  a sequence of automorphisms on $A$. For $1\leq i< j \leq n$, we fix an invertible element $\xi_{ij}$ in $A$. The tuple $(\sigma_1, \cdots, \sigma_n; \xi_{ij})$ is called a \emph{type} of length $n$,  if
$$\sigma_i \sigma_j(a)=\xi_{ij} \;(\sigma_j \sigma_i(a))\;  (\xi_{ij})^{-1}$$
holds for any $a\in A$ and  $1\leq i< j \leq n$.

\begin{defn}\label{defn:type-n}
Let $(\sigma_1, \cdots, \sigma_n; \xi_{ij})$ be  a type of length $n$.    A regular sequence $(\omega_1, \cdots, \omega_n)$ is said to be \emph{of type $(\sigma_1, \cdots, \sigma_n; \xi_{ij})$} provided that the following conditions are satisfied:
\begin{enumerate}
    \item[(T1)] $\omega_i a=\sigma_i(a)\omega_i$ and $\sigma_i(\omega_i)=\omega_i$ for any $a\in A$ and each $i$;
    \item[(T2)] $\sigma_i(\omega_j)=\xi_{ij}\omega_j$ and $\sigma_j(\omega_i)=(\xi_{ij})^{-1}\omega_i$ for any $1\leq i<j \leq n$.
\end{enumerate}
\end{defn}

Set $T_i={^{\sigma_i}(-)}$ to be the twisting endofunctor on $A\mbox{-Mod}$. Each element $\omega_i$ in $A$ gives rise to a natural transformation
$$\omega_i\colon {\rm Id}_{A\mbox{-}{\rm Mod}} \longrightarrow T_i.$$
Moreover, the invertible elements $\xi_{ij}$ yield the natural isomorphism
$$\xi_{ij}\colon T_iT_j\stackrel{\sim}\longrightarrow T_jT_i.$$
These data allow us to form the category of \emph{$n$-dimensional module factorizations}
$$\mathbf{F}(A; \omega_1, \cdots, \omega_n)=\mathbf{F}(A\mbox{-Mod}; \omega_1, \cdots, \omega_n).$$
Furthermore, we write
$$\mathbf{GF}(A; \omega_1, \cdots, \omega_n)=\mathbf{F}(A\mbox{-GProj}; \omega_1, \cdots, \omega_n),$$ which is a Frobenius category.

For each $\alpha=(a_1, \cdots, a_n)\in \{0, 1\}^n$, we write $|\alpha|=a_1+\cdots+a_n$.  For each $1\leq i\leq n$, we set ${\rm sign}(\alpha, i)=(-1)^s$, where $s$ is the cardinality of the set $\{j\; |\; i<j\leq n, a_j=0\}$.

Set $B=A/{(\omega_1, \cdots, \omega_n)}$ to be the quotient ring.  We generalize Lemma~\ref{lem:Cok-2} as follows. For any $X\in \mathbf{GF}(A; \omega_1, \cdots, \omega_n)$ and $0\leq m\leq n-1$, we have a morphism
$$\bigoplus_{|\alpha|=m} X^\alpha \longrightarrow \bigoplus_{|\beta|=m+1} X^\beta,$$
given by ${\rm sign}(\alpha, i) d_i^\alpha \colon X^\alpha \rightarrow X^{\alpha+\epsilon_i}$ for $\alpha=(a_1, \cdots, a_n)$ satisfying $a_i=0$ and $|\alpha|=m$. Letting $m$ vary, we obtain an acyclic complex of Gorenstein projective $A$-modules.
\begin{align}\label{seq:TCok-n}
0\longrightarrow X^{\bf 0} \xrightarrow{((-1)^{n-i} d_i^{\bf 0})_{1\leq i\leq n}}  \bigoplus_{i=1}^n X^{\epsilon_i} \longrightarrow \cdots \longrightarrow \bigoplus_{i=1}^nX^{{\bf 1}-\epsilon_i}  \xrightarrow{\sum_{i=1}^n d_i^{{\bf 1}-\epsilon_i}} X^{{\bf 1}}
\end{align}
Moreover, the cokernel of the rightmost morphism, which is denoted by ${\rm TCok}(X)$ and called the \emph{total-cokernel} of $X$, is a Gorenstein projective $B$-module.

We write $\mathbf{GF}^{0^n}(A; \omega_1, \cdots, \omega_n)=\mathbf{F}^{0^n}(A\mbox{-GProj}; \omega_1, \cdots, \omega_n)$.

\begin{rem}
    We mention that if $X$ belongs to $\mathbf{GF}^{0^n}(A; \omega_1, \cdots, \omega_n)$, the sequence above is a projective presentation of ${\rm TCok}(X)$ of length $n$. In particular, $X^{\bf 0}$ is isomorphic to the $n$-th syzygy of the $A$-module ${\rm TCok}(X)$.
\end{rem}

Consequently, we have the following total-cokernel functor
$${\rm TCok}\colon \mathbf{GF}(A; \omega_1, \cdots, \omega_n)\longrightarrow B\mbox{-{\rm GProj}}, \; X \mapsto {\rm TCok}(X). $$
It induces a triangle functor
$${\rm TCok}\colon \underline{\mathbf{GF}}(A; \omega_1, \cdots, \omega_n)\longrightarrow B\mbox{-\underline{\rm GProj}} $$
between the corresponding stable categories.

\begin{thm}\label{thm:general}
    Let $(\omega_1, \cdots, \omega_n)$ be a regular sequence in $A$ of type $(\sigma_1, \cdots, \sigma_n; \xi_{i j})$. Set $B=A/{(\omega_1, \cdots, \omega_n)}$. Then the functor ${\rm TCok}$ above restricts to a triangle equivalence
    $${\rm TCok}\colon \underline{\mathbf{GF}}^{0^n}(A; \omega_1, \cdots, \omega_n)\stackrel{\sim}\longrightarrow B\mbox{-\underline{\rm GProj}}. $$
When $A$ is left noetherian, the equivalence above restricts further to a triangle equivalence
     $${\rm TCok}\colon \underline{\mathbf{MF}}^{0^n}(A; \omega_1, \cdots, \omega_n) \stackrel{\sim}\longrightarrow  B\mbox{-\underline{\rm Gproj}}^{<+\infty}.$$
\end{thm}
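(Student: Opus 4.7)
The plan is to argue by induction on $n$, with the base cases $n=1$ and $n=2$ already covered by Proposition~\ref{prop:Chen24} and Theorem~\ref{thm:2-dim}, respectively. The inductive step will mimic the reduction used in the proof of Theorem~\ref{thm:2-dim}, substituting the inductive hypothesis for length $n-1$ in place of the single invocation of Proposition~\ref{prop:Chen24} used there.

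For the inductive step, I would set $\Gamma = M_2(A; \omega_n, \sigma_n)$ and, via Proposition~\ref{prop:M-2}, identify $\mathbf{F}(A; \omega_n)\simeq \Gamma\mbox{-Mod}$ through an equivalence $\Phi$. A multi-variable generalization of Lemma~\ref{lem:kappa} will translate each natural transformation $\tilde\omega_i$ (for $1\leq i\leq n-1$) into multiplication by $\omega_i I_2$ on $\Gamma\mbox{-Mod}$, together with a lifted automorphism $\tilde\sigma_i$ of $\Gamma$ given by the formula of Section~\ref{sec:7} with $\xi$ replaced by $\xi_{in}$; the natural isomorphisms $\tilde\xi_{ij}$ become $\xi_{ij} I_2$. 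Using the type axioms (T1) and (T2), I will verify that $(\omega_1 I_2, \ldots, \omega_{n-1} I_2)$ is a regular sequence in $\Gamma$ of type $(\tilde\sigma_1, \ldots, \tilde\sigma_{n-1}; \xi_{ij} I_2)$, so that $\Phi$ lifts to an equivalence
$$\mathbf{F}(A; \omega_1, \ldots, \omega_n) \simeq \mathbf{F}(\Gamma; \omega_1 I_2, \ldots, \omega_{n-1} I_2),$$
restricting to equivalences on the corresponding $\mathbf{GF}$ and $\mathbf{GF}^{0^{n-1}}$ subcategories.

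Next, applying the inductive hypothesis to $\Gamma$ yields a triangle equivalence
$$\mathrm{TCok}\colon \underline{\mathbf{GF}}^{0^{n-1}}(\Gamma; \omega_1 I_2, \ldots, \omega_{n-1} I_2) \stackrel{\sim}{\longrightarrow} \bar{\Gamma}\mbox{-\underline{\rm GProj}},$$
where $\bar{\Gamma} = \Gamma/(\omega_1 I_2, \ldots, \omega_{n-1} I_2) \simeq M_2(A'; \bar{\omega}_n, \bar{\sigma}_n)$ with $A' = A/(\omega_1, \ldots, \omega_{n-1})$. Under the equivalence $\Phi_*$, the transpose-projection functor $\mathrm{tpr}^1$ at the outer level corresponds to the Schur functor $S_{e_{11}}$ on $\omega_i I_2$-factorizations over $\Gamma$; since Schur functors commute with cokernels, $S_{e_{11}}$ corresponds in turn to $S_{\bar{e}_{11}}$ under TCok. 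Passing to essential kernels of these matching functors gives
$$\underline{\mathbf{GF}}^{0^n}(A; \omega_1, \ldots, \omega_n) \simeq \bar{\Gamma}\mbox{-\underline{\rm GProj}}^{\bar{e}_{11}}.$$
A second application of Proposition~\ref{prop:M-2} to $(A'; \bar{\omega}_n, \bar{\sigma}_n)$ identifies the right-hand side with $\underline{\mathbf{GF}}^0(A'; \bar{\omega}_n)$, which is in turn equivalent to $B\mbox{-\underline{\rm GProj}}$ via Proposition~\ref{prop:Chen24} and the identification $A'/(\bar{\omega}_n) = B$. A diagram chase through the acyclic resolution (\ref{seq:TCok-n}) then confirms that the composite functor is indeed TCok. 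For the noetherian statement, I will argue as in the final paragraph of the proof of Theorem~\ref{thm:2-dim}: the $\mathbf{MF}^{0^n}$ condition forces every component of $X$ other than $X^{\bf 0}$ to be finitely generated projective over $A$, and then (\ref{seq:TCok-n}) shows that $\mathrm{TCok}(X)\in B\mbox{-\underline{\rm Gproj}}^{<+\infty}$ if and only if the Gorenstein projective module $X^{\bf 0}$ has finite $A$-projective dimension, equivalently if and only if $X^{\bf 0}$ is projective.

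The main obstacle will be the bookkeeping at the start of the inductive step: verifying that the candidate twisting automorphisms $\tilde\sigma_i$ satisfy the braiding relation (\ref{equ:type}) with the invertible elements $\xi_{ij} I_2$ in $\Gamma$, and that the correspondences between $\mathrm{tpr}^1$ at the outer level, $S_{e_{11}}$ on $\Gamma$-factorizations, and $S_{\bar{e}_{11}}$ after TCok commute coherently through the $(n-1)$-fold iteration. These are routine generalizations of the $n=2$ computation, but they demand careful tracking of twisting automorphisms, idempotents, and the interaction between the inner and outer levels of the factorization hierarchy.
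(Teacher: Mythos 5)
Your proposal matches the paper's proof essentially line for line: induction on $n$, passage to the twisted matrix ring $\Gamma=M_2(A;\omega_n,\sigma_n)$ via Proposition~\ref{prop:M-2} and a multi-variable version of Lemma~\ref{lem:kappa}, verification that $(\omega_1 I_2,\ldots,\omega_{n-1} I_2)$ is a regular sequence in $\Gamma$ of type $(\tilde\sigma_1,\ldots,\tilde\sigma_{n-1};\xi_{ij} I_2)$, application of the inductive hypothesis to $\Gamma$, the essential-kernel comparison between $\mathrm{tpr}^1$ and the Schur functor $S_{\bar{e}}$, a final pass through Proposition~\ref{prop:M-2} and Proposition~\ref{prop:Chen24} over $\bar A=A/(\omega_1,\ldots,\omega_{n-1})$, and the same finite-projective-dimension argument via (\ref{seq:TCok-n}) for the noetherian refinement. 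This is exactly the route taken in Section~\ref{sec:8}, so no further comparison is needed.
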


Here, $\underline{\mathbf{MF}}^{0^n}(A; \omega_1, \cdots, \omega_n)=\mathbf{F}^{0^n}(A\mbox{-proj}; \omega_1, \cdots, \omega_n)$.  In Theorem~C, we denote $0^n$ by the bold zero ${\bf 0}$.

\begin{proof}
Consider $\bar{A}=A/(\omega_1, \cdots, \omega_{n-1})$. Then $\bar{\omega}_n$ is a regular normal element in $\bar{A}$, and $\sigma_n$ induces an automorphism $\bar{\sigma}_n$ on $\bar{A}$. We form the twisted matrix ring $\Delta=M_2(\bar{A}; \bar{\omega}_n, \bar{\sigma}_n)$. We identify $B$ with $\bar{A}/{(\bar{\omega}_n)}$

Consider the twisted matrix ring $\Gamma=M_2(A; \omega_n ; \sigma_n)$. For each $1\leq i\leq n-1$, the automorphism $\sigma_i$ on $A$ induces an automorphism $\tilde{\sigma}_i$ on $\Gamma$, which sends
$$\begin{pmatrix} a_{11} & a_{12} \\
                                                                    a_{21} & a_{22}\end{pmatrix} \mbox{ to }  \begin{pmatrix} \sigma_i(a_{11}) & \sigma_i(a_{12}) \\
                                                                    \sigma_i(a_{21})\xi_{in} & \sigma_i(a_{22})\end{pmatrix}.$$
Furthermore,  $(\omega_1 I_2, \cdots, \omega_{n-1}I_2)$ is a regular sequence in $\Gamma$ of type $(\tilde{\sigma}_1, \cdots, \tilde{\sigma}_{n-1}; \xi_{ij} I_2)$. We have a canonical isomorphism of rings
\begin{align}\label{iso:GD}
    \Gamma/{(\omega_1 I_2, \cdots, \omega_{n-1}I_2)} \stackrel{\sim}\longrightarrow  \Delta.
\end{align}

Fix the idempotent $e=\begin{pmatrix}1 & 0 \\ 0 & 0 \end{pmatrix}$ in $\Gamma$. Its image in $\Delta$ is denoted by $\bar{e}$.  Set $\mathcal{A}=A\mbox{-GProj}$ and  $\mathcal{B}=\Gamma\mbox{-GProj}$. By Proposition~\ref{prop:M-2}, we have an equivalence of exact categories
$$\Phi\colon \mathbf{F}(\mathcal{A}; \omega_n) \stackrel{\sim} \longrightarrow \mathcal{B}.$$
It induces the equivalence $\Phi_*$ in the following commutative diagram.
\[\xymatrix{
\underline{\mathbf{F}}^{0^{n-1}}(\mathbf{F}(\mathcal{A}; \omega_n); \tilde{\omega}_1, \cdots, \tilde{\omega}_{n-1}) \ar[r]^-{\Phi_*} \ar[d]_-{{\rm tpr}^1} & \underline{\mathbf{F}}^{0^{n-1}}(\mathcal{B}; \omega_1 I_2, \cdots, \omega_{n-1}I_2) \ar[r]^-{\rm TCok} & \Delta\mbox{-\underline{GProj}} \ar[d]^-{S_{\bar{e}}}\\
\underline{\mathbf{F}}^{0^{n-1}}(\mathcal{A}; \omega_1, \cdots, \omega_{n-1}) \ar[rr]^-{\rm TCok} && \bar{A}\mbox{-\underline{GProj}}
}\]
The upper total-cokernel functor ${\rm TCok}$ uses the induction hypothesis  and the isomorphism (\ref{iso:GD}). Since we identify $\bar{e} \Delta \bar{e}$ with $\bar{A}$, we have the Schur functor $S_{\bar{e}}$. Here, we recall from Proposition~\ref{prop:M-2} that the idempotent $\bar{e}$ is GP-compatible.

Taking the essential kernels of the two vertical functors in the commutative diagram above, we obtain a triangle equivalence
$$\underline{\mathbf{F}}^{0^{n}}(\mathcal{A}; \omega_1, \cdots, \omega_{n})\stackrel{\sim}\longrightarrow \Delta\mbox{-\underline{GProj}}^{\bar{e}}.$$
In view of (\ref{equiv:2}) and (\ref{equiv:3}),  we deduce a triangle equivalence
$$\Delta\mbox{-\underline{GProj}}^{\bar{e}} \stackrel{\sim}\longrightarrow  B\mbox{-\underline{GProj}}.$$
This proves the first equivalence.

Replacing Lemma~\ref{lem:Cok-2} by the exact sequence (\ref{seq:TCok-n}), the same argument at the end of the proof of Theorem~\ref{thm:2-dim} applies here. Then we infer the second equivalence.
\end{proof}

\begin{rem}\label{rem:dense-n}
By tracing the proof above and using Remark~\ref{rem:dense}, we infer that the total-cokernel functor
    $${\rm TCok}\colon {\mathbf{GF}}^{0^n}(A; \omega_1, \cdots, \omega_n) \longrightarrow B\mbox{-{\rm GProj}} $$
    is dense. Consequently, a $B$-module $M$ is Gorenstein projective if and only if it is isomorphic to the total-cokernel of some module factorization in ${\mathbf{GF}}^{0^n}(A; \omega_1, \cdots, \omega_n)$. Similarly, a $B$-module $M$ belongs to $B\mbox{-{\rm Gproj}}^{<+\infty}$ if and only if there is an $n$-dimensional matrix factorization in ${\mathbf{MF}}^{0^n}(A; \omega_1, \cdots, \omega_n)$ whose total-cokernel is isomorphic to $M$.
\end{rem}

\section{Complete intersections}

In this section, we illustrate Theorem~\ref{thm:general} on complete intersections.

\subsection{Commutative complete intersections}\label{subsec:9.1}
Let $S$ be any ring with a regular sequence $(\omega_1, \cdots, \omega_n)$ such that each $\omega_i$ is central.  Then $(\omega_1, \cdots, \omega_n)$ is clearly a regular sequence of type $({\rm Id}_A, \cdots, {\rm Id}_A; 1)$. We consider the abelian category
$$\mathbf{F}(S; \omega_1, \cdots, \omega_n)=\mathbf{F}(S\mbox{-Mod}; \omega_1, \cdots, \omega_n)$$
of $n$-dimensional module factorizations. Each factorization is  a commutative $n$-cube $$X=(X^\alpha; d_i)_{\alpha\in \{0, 1\}^n, 1\leq i\leq n},$$
 whose components $X^\alpha$ are $S$-modules and edges in direction $i$ belong to $\mathbf{F}(S; \omega_i)$, that is,
$$X^\alpha \stackrel{d_i^\alpha} \longrightarrow X^{\alpha+\epsilon_i}\stackrel{d_i^{\alpha+\epsilon_i}}\longrightarrow  X^\alpha.$$
Here, when the $i$-th entry of $\alpha$ equals $1$, the $i$-th entry of  $\alpha+\epsilon_i$ is understood to be  $0$. The commutativity of $X$ means that all the composite homomorphisms along each shortest path from $X^\alpha$ to $X^\beta$ are the same. Equivalently, $X$ corresponds to an $n$-dimensional  directed lattice of $S$-modules, which is commutative and  $2$-periodic in each direction.

Let $Y=(Y^\alpha; \partial_i)_{\alpha\in \{0, 1\}^n, 1\leq i\leq n}$ be another such factorization. A morphism $f=(f^\alpha)_{\alpha\in \{0, 1\}^n}\colon X\rightarrow Y$ consists of homomorphisms $f^\alpha\colon X^\alpha \rightarrow Y^\alpha$ of $S$-modules, which satisfy
$$\partial_i^\alpha\circ f^\alpha =f^{\alpha+\epsilon_i}\circ d_i^\alpha.$$

The following is inspired by  Definition~\ref{defn:p-null-2}. The morphism $f$ is said to be \emph{p-null-homotopical} if there are homomorphisms $s^\alpha\colon X^\alpha \rightarrow Y^{{\bf 1}-\alpha}$ of $S$-modules for all $\alpha\in \{0, 1\}^n$ such that
\begin{enumerate}
    \item[(1)] all the homomorphisms $s^\alpha$ factor through projective $S$-modules;
    \item[(2)] for each $\alpha$, we have $f^\alpha=\sum_{\beta\in \{0, 1\}^n} \partial^{{\bf 1}-\beta, \alpha} \circ s^\beta\circ d^{\alpha, \beta}$.
\end{enumerate}
 Here, $d^{\alpha, \beta}$ is the unique homomorphism from $X^\alpha$ to $X^\beta$ obtained by composing  these $d_i$'s  along any shortest path from $X^\alpha$ to $X^\beta$ in the $n$-cube $X$. Similarly, we have the homomorphism $\partial^{{\bf 1}-\beta, \alpha}$ from $Y^{{\bf 1}-\beta}$ to $Y^\alpha$.

The following result is analogous to Lemma~\ref{lem:p-null-2}, which is proved by induction on $n$. It leads to  a better understanding of morphisms in  the stable category $\underline{\mathbf{F}}(S; \omega_1, \cdots, \omega_n)$.

\begin{lem}
Let $f\colon X\rightarrow Y$ be a morphism in $\mathbf{F}(S; \omega_1, \cdots, \omega_n)$. Then it factors through projective objects if and only if it is p-null-homotopical. \hfill $\square$
\end{lem}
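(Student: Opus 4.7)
The plan is to prove the lemma by induction on $n$, with the base cases $n=1$ (Lemma~\ref{lem:p-null}) and $n=2$ (Lemma~\ref{lem:p-null-2}) already established. The inductive step exploits the recursive definition
$$\mathbf{F}(S; \omega_1, \ldots, \omega_n) = \mathbf{F}\bigl(\mathbf{F}(S; \omega_2, \ldots, \omega_n);\, \tilde\omega_1\bigr).$$
Write $\mathcal{E} = \mathbf{F}(S; \omega_2, \ldots, \omega_n)$. Every object $X$ of the outer category is an $\tilde\omega_1$-factorization $(X^0_\bullet, X^1_\bullet; d^0, d^1)$ with $X^s_\bullet \in \mathcal{E}$ the facet $\{X^{(s, a_2, \ldots, a_n)}\}_{a_j \in \{0,1\}}$, and similarly for $Y$. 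Since $\omega_1$ is central, one may take $T_1 = \mathrm{Id}$ and $\xi_{1j} = 1$, so that the twists and the unit/counit isomorphisms appearing in Definition~\ref{defn:p-null1} trivialize.

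For the ``if'' direction, given homotopy data $(s^\alpha)_{\alpha \in \{0,1\}^n}$ with each $s^\alpha$ factoring through a projective $S$-module, I split it along the first coordinate by assembling
$$h^0 := (s^{(0,\beta)})_{\beta \in \{0,1\}^{n-1}} \colon X^0_\bullet \longrightarrow Y^1_\bullet, \qquad h^1 := (s^{(1,\beta)})_{\beta \in \{0,1\}^{n-1}} \colon X^1_\bullet \longrightarrow Y^0_\bullet.$$
The identities of condition~(2) of p-null-homotopy, after grouping according to $\beta_1 = 0$ vs.\ $\beta_1 = 1$, split into two families: the ``outer'' identities are precisely those of Definition~\ref{defn:p-null1} for the $\tilde\omega_1$-factorization, while the ``inner'' identities for fixed $\beta_1$ are exactly the $(n-1)$-dimensional p-null-homotopy identities for $h^0$ and $h^1$ viewed as morphisms in $\mathcal{E}$. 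By the inductive hypothesis applied inside $\mathcal{E}$, the morphisms $h^0$ and $h^1$ factor through projectives in $\mathcal{E}$, so by Lemma~\ref{lem:p-null} the morphism $f$ factors through $\theta^0(P_\bullet) \oplus \theta^1(Q_\bullet)$ for some projectives $P_\bullet, Q_\bullet \in \mathcal{E}$. Iterating Proposition~\ref{prop:Frobenius-2}, any such object is a direct summand of an object of the form $\bigoplus_\gamma \theta^\gamma(P)$ with $P$ a projective $S$-module, hence is projective in $\mathbf{F}(S; \omega_1, \ldots, \omega_n)$.

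For the ``only if'' direction, I reverse the argument. If $f$ factors through a projective, then by Lemma~\ref{lem:p-null} applied to the outer $\tilde\omega_1$-factorization there exist $h^0, h^1$ factoring through projectives in $\mathcal{E}$ and satisfying the two outer identities. The inductive hypothesis then produces $(n-1)$-dimensional homotopy data $s^{(0,\beta)}$ and $s^{(1,\beta)}$, which I reassemble into the required $(s^\alpha)_{\alpha \in \{0,1\}^n}$. The full identity of condition~(2) is then the recombination of the outer and inner identities.

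The main obstacle is the combinatorial bookkeeping: one must verify that the single identity
$$f^\alpha = \sum_{\beta \in \{0,1\}^n} \partial^{\mathbf{1}-\beta, \alpha} \circ s^\beta \circ d^{\alpha, \beta}$$
splits cleanly into the outer Lemma~\ref{lem:p-null} identities in the first coordinate and the inner inductive identities in the remaining $n-1$ coordinates. The shortest-path convention for $d^{\alpha, \beta}$ and $\partial^{\mathbf{1}-\beta, \alpha}$ makes this decomposition canonical once one observes that every shortest path from $X^\alpha$ to $X^\beta$ can be chosen to move first in the first coordinate and then along the remaining coordinates; centrality of the $\omega_i$ ensures all such choices agree and eliminates the twist factors that complicated the $n=2$ bookkeeping. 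Given this, the recombination is a routine, if tedious, verification.
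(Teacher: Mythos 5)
Your overall strategy — peel off one coordinate using the recursive description of $\mathbf{F}(S;\omega_1,\ldots,\omega_n)$, apply the one-dimensional Lemma~\ref{lem:p-null} to the outer layer and the inductive hypothesis to the inner layer — is exactly what the paper intends (it simply asserts that the lemma is proved by induction, analogous to Lemma~\ref{lem:p-null-2}, which was itself reduced to two applications of Lemma~\ref{lem:p-null}). Your observation that centrality of the $\omega_i$ makes $T_i=\mathrm{Id}$ and kills all the twist isomorphisms is also a correct and useful simplification.

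However, the key construction step, ``assembling $h^0 := (s^{(0,\beta)})_{\beta}\colon X^0_\bullet \to Y^1_\bullet$,'' does not make sense as written, and this is not merely notational. The homotopy datum $s^{(0,\beta')}$ has target $Y^{\mathbf{1}-(0,\beta')} = Y^{(1,\mathbf{1}'-\beta')}$, not $Y^{(1,\beta')}$, so the collection $(s^{(0,\beta')})_{\beta'}$ is \emph{not} a morphism of $(n-1)$-dimensional factorizations from $X^0_\bullet$ to $Y^1_\bullet$ — it is precisely a family of ``degree-reversing'' maps, i.e., a candidate $(n-1)$-dimensional homotopy datum, not a morphism. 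The correct move (the one implicit in the proof of Lemma~\ref{lem:p-null-2}, where ``$s^{0,0}$ and $s^{1,0}$ correspond to $h^{\bullet,0}$'') is to \emph{define} $h^0$ via the $(n-1)$-dimensional p-null-homotopy formula applied to the data $(s^{(0,\beta')})_{\beta'}$, so that $h^0$ is p-null-homotopical in $\mathcal{E}$ by construction. The outer identities for $f^0$ and $f^1$ in terms of these $h^0, h^1$ then follow from the single $n$-dimensional identity by decomposing the shortest-path compositions $d^{\alpha,\beta}$ and $\partial^{\mathbf{1}-\beta,\alpha}$ first in the first coordinate and then in the rest. Your claim that the $n$-dimensional identity ``splits into two families'' (outer and inner) is therefore the wrong picture: the inner identities hold by definition of $h^0, h^1$, and only the outer identity is something to verify. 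Once you correct the construction of $h^0, h^1$ this way, the rest of your argument — inductive hypothesis in $\mathcal{E}$, Lemma~\ref{lem:p-null} for the outer layer, and Proposition~\ref{prop:Frobenius-2} iterated to identify projectives — goes through, and the two directions are indeed inverse rearrangements of each other as you say.
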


In what follows, we assume that $S$ is a regular local ring. Then the quotient ring $R=S/{(\omega_1, \cdots, \omega_n)}$ is a \emph{complete intersection}; if $n=1$, it is called a \emph{hypersurface}.

Denote by $\underline{\rm MCM}(R)$ the stable category of maximal Cohen-Macaulay $R$-modules. The following result extends  Eisenbud's matrix factorization theorem \cite[Section~6]{Eis} from hypersurfaces to complete intersections.

\begin{thm}\label{thm:B}
     Let  $R=S/{(\omega_1, \cdots, \omega_n)}$ be a complete intersection. Then the total-cokernel functor induces a triangle equivalence
    $${\rm TCok}\colon  \underline{\mathbf{MF}}^{0^n}(S; \omega_1, \cdots, \omega_n) \stackrel{\sim}\longrightarrow  \underline{\rm MCM}(R). $$
\end{thm}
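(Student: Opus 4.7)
The plan is to derive Theorem~\ref{thm:B} as an immediate specialization of Theorem~\ref{thm:general}. First I would verify that the hypotheses apply: since $S$ is commutative and each $\omega_i$ is central, the conditions (T1) and (T2) of Definition~\ref{defn:type-n} hold trivially with $\sigma_i={\rm Id}_S$ and $\xi_{ij}=1$, so $(\omega_1,\ldots,\omega_n)$ is a regular sequence of type $({\rm Id}_S,\ldots,{\rm Id}_S;1)$. Since $S$ is noetherian, the second equivalence in Theorem~\ref{thm:general} yields
$${\rm TCok}\colon \underline{\mathbf{MF}}^{\bf 0}(S;\omega_1,\ldots,\omega_n) \stackrel{\sim}{\longrightarrow} R\mbox{-\underline{Gproj}}^{<+\infty}.$$

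Next I would identify the target with $\underline{\rm MCM}(R)$ using two classical facts. On one hand, because $S$ is a regular local ring, its global dimension is finite, so every finitely generated $R$-module has finite projective dimension when regarded as an $S$-module via $S\twoheadrightarrow R$; hence the superscript ``$<+\infty$'' is automatic and $R\mbox{-Gproj}^{<+\infty}=R\mbox{-Gproj}$. On the other hand, the complete intersection $R$ is a commutative Gorenstein local ring, and in this setting Auslander-Bridger theory identifies finitely generated Gorenstein projective modules with maximal Cohen-Macaulay modules, so $R\mbox{-Gproj}={\rm MCM}(R)$. Together these give $R\mbox{-\underline{Gproj}}^{<+\infty}=\underline{\rm MCM}(R)$ as triangulated categories.

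Finally, one should reconcile a minor bookkeeping difference in conventions: Theorem~B defines $\mathbf{MF}(S;\omega_1,\ldots,\omega_n)$ using free $S$-modules of finite rank, whereas Theorem~C uses finitely generated projective modules as components. Over the local ring $S$ every finitely generated projective module is free, so the two conventions describe the same category and $\underline{\mathbf{MF}}^{\bf 0}(S;\omega_1,\ldots,\omega_n)$ is unambiguous. Composing the equivalence above with the identification of its target yields the statement. I do not expect a substantive obstacle; the argument is purely a specialization plus the standard chain $R\mbox{-Gproj}^{<+\infty}=R\mbox{-Gproj}={\rm MCM}(R)$, and the only delicate point, namely the characterization of Gorenstein projectives over $R$ as MCM modules, is by now part of the standard toolkit and requires no new homological input beyond Theorem~\ref{thm:general}.
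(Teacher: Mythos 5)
Your proposal is correct and follows essentially the same route as the paper: verify the trivial type $({\rm Id}_S,\ldots,{\rm Id}_S;1)$, specialize Theorem~\ref{thm:general}, use the finiteness of the global dimension of $S$ to drop the superscript $<{+\infty}$, and invoke Auslander--Bridger to identify $R\mbox{-Gproj}$ with ${\rm MCM}(R)$. The observation that finitely generated projective $S$-modules are free (so the two descriptions of $\mathbf{MF}^{0^n}$ agree) is a small extra remark the paper leaves implicit, but it is a reasonable addition.
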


In Theorem~B of Introduction, we write the bold zero ${\bf 0}$ for $0^n$.

\begin{proof}
By \cite[Chapter Four, \S2]{ABr}, a finitely generated $R$-module is maximal Cohen-Macaualy if and only if it is Gorenstein projective. Since $S$ has finite global dimension, we infer that ${\rm MCM}(R)=R\mbox{-{\rm Gproj}}^{<+\infty}$.  Then we are done by Theorem~\ref{thm:general}.
\end{proof}

\begin{rem}\label{rem:PropA}
Combining the proof above and Remark~\ref{rem:dense-n}, we infer that an $R$-module is maximal Cohen-Macaulay if and only if  it is isomorphic to the total-cokernel of some $n$-dimensional matrix factorization in  ${\mathbf{MF}}^{0^n}(S; \omega_1, \cdots, \omega_n)$. In particular, this implies Proposition~A in Introduction.
\end{rem}

    Let us recall \emph{higher matrix factorizations} \cite{EP16}, HMF for short, over $S$ with respect to $(\omega_1, \cdots, \omega_n)$. Each HMF is a tuple $Z=(Z^0, Z^1; d, h_1, \cdots, h_n)$, which is given by the following data:
    \begin{enumerate}
        \item[(1)] for $i \in \{0, 1\}$, $Z^i$ is a free $S$-module of finite rank, which has a chosen filtration consisting of free $S$-modules
    $$0=Z^i_0\subseteq Z^i_1\subseteq \cdots \subseteq Z^i_{n-1}\subseteq Z^i_n=Z^i$$
    with all the factors $Z^i_{q+1}/{Z^i_q}$ free;
        \item[(2)] $d\colon Z^0\rightarrow Z^1$ is a homomorphism satisfying $d(Z^0_q)\subseteq Z^1_q$;
        \item[(3)] for each $1\leq q\leq n$, $h_q\colon Z^1_q\rightarrow Z^0_q$ is a homomorphism.
    \end{enumerate}
    These data are subject to the following conditions:
    \begin{enumerate}
        \item[(4)] the image of $\omega_q {\rm Id}_{Z^1_q}-d\circ h_q\colon Z^1_q\rightarrow Z^1_q$ is contained in $\sum_{l=1}^{q-1} \omega_l Z_q^1$;
        \item[(5)] the image of $\omega_q{\rm Id}_{Z^0_q}-h_q\circ d \colon Z_q^0\rightarrow Z^0_q$ is contained in $\sum_{l=1}^{q-1} \omega_lZ^0_q+Z_{q-1}^0$.
    \end{enumerate}
    The associated \emph{HMF module} is defined to be
    $$C(Z)={\rm Coker}(R\otimes_S d).$$

In view of (\ref{seq:TCok-n}), the following easy observation is natural.

\begin{prop}
    Each $n$-dimensional matrix factorization  $X=(X^\alpha; d_i)_{\alpha\in \{0, 1\}^n, 1\leq i\leq n}$ gives rise to an HMF
    $$Z=(\bigoplus_{i=1}^n X^{{\bf 1}-\epsilon_i}, X^{\bf 1}; \sum_{i=1}^n d_i^{{\bf 1}-\epsilon_i}, h_1, \cdots, h_n),$$
    where the filtrations are given by  $Z_q^1=X^{\bf 1}$ and $Z_q^0=\bigoplus_{i=1}^q X^{{\bf 1}-\epsilon_i}$ for $q\geq 1$, and $h_q$ is given by $Z_q^1=X^{\bf 1}\stackrel{d_q^{\bf 1}}\rightarrow X^{{\bf 1}-\epsilon_q}\subseteq Z_q^0$. Moreover, we have ${\rm TCok}(X)\simeq C(Z)$. \hfill $\square$
\end{prop}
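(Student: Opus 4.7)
The proposition asserts both that $Z$ forms an HMF and that ${\rm TCok}(X)\simeq C(Z)$. I would handle the cokernel identification first since it is essentially tautological. By construction the map $d = \sum_{i=1}^n d_i^{{\bf 1}-\epsilon_i}$ is precisely the rightmost differential of the complex (\ref{seq:TCok-n}), so ${\rm TCok}(X) = {\rm Coker}(d)$. For any $y\in X^{{\bf 1}}$, the element $\omega_q y = d_q^{{\bf 1}-\epsilon_q}(d_q^{{\bf 1}}(y))$ lies in the image of $d$, so each $\omega_q$ annihilates ${\rm Coker}(d)$; hence ${\rm Coker}(d)$ is automatically an $R$-module and coincides with $R\otimes_S {\rm Coker}(d) = C(Z)$.

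For the HMF axioms, conditions (1)--(3) are formal bookkeeping. Freeness of the $X^\alpha$ from the definition of a matrix factorization gives freeness of the $Z^i$ and of the successive quotients $Z^0_q/Z^0_{q-1} \cong X^{{\bf 1}-\epsilon_q}$; the filtration on $Z^1$ is trivial, so the inclusion $d(Z^0_q)\subseteq Z^1_q = X^{{\bf 1}}$ is automatic; and the $h_q$ are as supplied. Condition (4) is immediate, since the direction-$q$ matrix factorization identity $d_q^{{\bf 1}-\epsilon_q}\circ d_q^{{\bf 1}} = \omega_q\,{\rm Id}_{X^{{\bf 1}}}$ gives $d\circ h_q = \omega_q\,{\rm Id}_{Z^1_q}$ identically, so the obstruction is the zero map.

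The substantial step is condition (5). I would split $Z^0_q = \bigoplus_{i=1}^q X^{{\bf 1}-\epsilon_i}$ and evaluate $\omega_q\,{\rm Id}-h_q\circ d$ on each summand. On the $q$-th summand, the other direction-$q$ identity $d_q^{{\bf 1}}\circ d_q^{{\bf 1}-\epsilon_q}=\omega_q\,{\rm Id}$ produces exact cancellation. On $X^{{\bf 1}-\epsilon_i}$ with $i<q$, the term $\omega_q x$ already lies in $X^{{\bf 1}-\epsilon_i}\subseteq Z^0_{q-1}$, so the problem reduces to placing $d_q^{{\bf 1}}\circ d_i^{{\bf 1}-\epsilon_i}(x)\in X^{{\bf 1}-\epsilon_q}$ inside $\sum_{l<q}\omega_l X^{{\bf 1}-\epsilon_q}+Z^0_{q-1}$. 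I would invoke the commutativity of the two-dimensional $(i,q)$-face of the cube (one of the four square relations made explicit in Subsection~\ref{subsec:6.1}) to rewrite this composite through the opposite corner as $d_i^{{\bf 1}-\epsilon_i-\epsilon_q}\circ d_q^{{\bf 1}-\epsilon_i}(x)$, and then combine with the direction-$i$ matrix factorization identities and an induction on $q$ to absorb the residual piece into $\sum_{l<q}\omega_l Z^0_q + Z^0_{q-1}$.

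The step I expect to be delicate is exactly this rewriting and absorption in condition (5) off the diagonal, which is the only place where the full commutative-cube structure of a higher-dimensional matrix factorization is used in an essential way (rather than just the one-dimensional factorization axioms). I would model the computation on the two-dimensional case worked out in Section~\ref{sec:7} and extend it to general $n$ face by face, relying on the symmetry noted earlier that the definition of $\mathbf{F}^{0^n}(\mathcal{A}; \omega_1, \cdots, \omega_n)$ does not depend on the ordering of the $\omega_i$'s.
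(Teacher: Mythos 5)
Your handling of the total-cokernel identification and of conditions (1)--(4) is correct and matches what the paper leaves implicit. Conditions (2) and (4) are in fact even stronger than claimed: the filtration on $Z^1$ is constant, so (2) is automatic, and $d\circ h_q = d_q^{{\bf 1}-\epsilon_q}\circ d_q^{\bf 1} = \omega_q\,{\rm Id}_{X^{\bf 1}}$ exactly, so (4) holds on the nose rather than merely modulo $\sum_{l<q}\omega_l Z^1_q$.

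The gap is in your treatment of condition (5) on the summands $X^{{\bf 1}-\epsilon_i}$ with $i<q$. After the legitimate rewrite $d_q^{\bf 1}\circ d_i^{{\bf 1}-\epsilon_i} = d_i^{{\bf 1}-\epsilon_i-\epsilon_q}\circ d_q^{{\bf 1}-\epsilon_i}$ (the commutative face), you say the direction-$i$ matrix factorization identities and induction "absorb the residual piece." They do not. The relation $d_i^{{\bf 1}-\epsilon_i-\epsilon_q}\circ d_i^{{\bf 1}-\epsilon_q}=\omega_i$ only gives $\omega_i X^{{\bf 1}-\epsilon_q}\subseteq{\rm Im}\,d_i^{{\bf 1}-\epsilon_i-\epsilon_q}$, not the reverse; there is nothing forcing the arbitrary element $d_q^{{\bf 1}-\epsilon_i}(x)$ into the image of $d_i^{{\bf 1}-\epsilon_q}$. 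A concrete failure: take $S=k[x,y]$, $\omega_1=x^2$, $\omega_2=y$, all $X^\alpha=S$, all $d_1=x$, $d_2^{(0,0)}=d_2^{(1,0)}=y$, $d_2^{(0,1)}=d_2^{(1,1)}=1$. For $a=1\in X^{(0,1)}=X^{{\bf 1}-\epsilon_1}$, one gets $(\omega_2-h_2 d)(a)=(y,-x)\in S\oplus S$, and the second coordinate $-x$ does not lie in $\omega_1 X^{{\bf 1}-\epsilon_2}=x^2 S$. So the literal, all-of-$Z^0_q$ reading of condition (5) is actually false for this two-dimensional matrix factorization.

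The resolution is not a cleverer absorption argument, but a more careful reading of the Eisenbud--Peeva definition that the paper's condition (5) is paraphrasing. In \cite{EP16}, the condition $h_q d\equiv\omega_q$ is required \emph{on the top graded piece} $B^0_q=Z^0_q/Z^0_{q-1}\cong X^{{\bf 1}-\epsilon_q}$ modulo $\sum_{l<q}\omega_l Z^0_q+Z^0_{q-1}$, not on all of $Z^0_q$. On the $q$-th summand you already computed $(\omega_q\,{\rm Id}-h_q\circ d)|_{X^{{\bf 1}-\epsilon_q}} = \omega_q - d_q^{\bf 1}\circ d_q^{{\bf 1}-\epsilon_q} = 0$. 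So (5) holds exactly on $B^0_q$, and the case $i<q$ never needs to be examined. With this reading every condition follows from the one-dimensional factorization identities in direction $q$ alone, and no commutative-face relation is needed anywhere --- which is why the paper records this as an immediate observation rather than proving it.
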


In view of Remark~\ref{rem:PropA},  the result above implies that each maximal Cohen-Macaualy $R$-module is an HMF module; see \cite[Theorem~10.5]{EP}.

\subsection{Quantum complete intersections}

We will recall quantum complete intersections from \cite[Section~2]{AGP}.

 Let $k$ be a field. Let $\mathbf{q}=(q_{ij})\in M_n(k)$ such that $q_{ii}=1$ and $q_{ij}q_{ji}=1$. The \emph{quantum polynomial algebra} is defined to be
    $$A_{\bf q}=k\langle x_1, \cdots, x_n\rangle /{(x_ix_j-q_{ij}x_jx_i\; |\; 1\leq i, j\leq n)}.$$
    Fix a sequence ${\bf l}=(l_1, l_2, \cdots, l_n)$ of natural numbers. For each $1\leq i\leq n$, we define an algebra automorphism $\sigma_i$ on $A_{\bf q}$ such that
    $$\sigma_i(x_j)=(q_{ij})^{l_i} x_j$$
    for any $1\leq j\leq n$. Set $\xi_{ij}=(q_{ij})^{l_il_j}$ for $1\leq i< j\leq n$.

    We observe that $(x_1^{l_1}, \cdots, x_n^{l_n})$ is a regular sequence in $A_{\bf q}$ of type $(\sigma_1, \cdots, \sigma_n; \xi_{ij})$. The corresponding quotient algebra $$\Lambda_{\bf q, l}=A_{\bf q}/{(x_1^{l_1}, \cdots, x_n^{l_n})}$$
    is called the \emph{quantum complete intersection}, which  is a finite dimensional selfinjective algebra.  We mention that quantum complete intersections are studied in \cite{BE, BO}.

\begin{thm}\label{thm:qci}
The  total-cokernel functor  induces  a triangle equivalence
     $${\rm TCok}\colon \underline{\mathbf{MF}}^{0^n}(A_{\bf q}; x_1^{l_1}, \cdots, x_n^{l_n})\stackrel{\sim}\longrightarrow \Lambda_{\bf q, l}\mbox{-\underline{\rm mod}}.$$
\end{thm}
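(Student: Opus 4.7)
The plan is to deduce Theorem~\ref{thm:qci} from Theorem~\ref{thm:general} applied to $A = A_{\bf q}$ with the sequence $(x_1^{l_1}, \ldots, x_n^{l_n})$ and $B = \Lambda_{\bf q, l}$. Three ingredients must be verified: (a) that $(x_1^{l_1}, \ldots, x_n^{l_n})$ is a regular sequence of type $(\sigma_1, \ldots, \sigma_n; \xi_{ij})$ in the sense of Definition~\ref{defn:type-n}; (b) that $A_{\bf q}$ is left noetherian; and (c) that $\Lambda_{\bf q, l}\mbox{-Gproj}^{<+\infty}$ coincides with all of $\Lambda_{\bf q, l}\mbox{-mod}$.

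For (a), I will check the axioms directly from the quantum relations. Iterating $x_i x_j = q_{ij} x_j x_i$ gives $x_i^{l_i} x_j = q_{ij}^{l_i} x_j x_i^{l_i}$, which on generators is precisely $\omega_i a = \sigma_i(a) \omega_i$; by multiplicativity this extends to arbitrary $a \in A_{\bf q}$. The identities $\sigma_i(\omega_i) = \omega_i$ (using $q_{ii}=1$) and $\sigma_i(\omega_j) = (q_{ij}^{l_i} x_j)^{l_j} = q_{ij}^{l_i l_j} \omega_j = \xi_{ij} \omega_j$ handle (T1) and (T2). The type compatibility $\sigma_i \sigma_j(a) = \xi_{ij}\,\sigma_j \sigma_i(a)\,\xi_{ij}^{-1}$ collapses to $\sigma_i \sigma_j = \sigma_j \sigma_i$, since each $\xi_{ij}$ is a central scalar; this commutation is immediate on generators from $\sigma_i \sigma_j(x_k) = q_{jk}^{l_j} q_{ik}^{l_i} x_k = \sigma_j \sigma_i(x_k)$. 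Regularity of the sequence is classical: $A_{\bf q}$ is a domain obtained as an iterated Ore extension of $k$, so $x_1^{l_1}$ is a regular normal element, and each further quotient $A_{\bf q}/(x_1^{l_1}, \ldots, x_i^{l_i})$ is again a skew-polynomial-type algebra in which $x_{i+1}^{l_{i+1}}$ remains regular and normal. Assertion (b) then follows from the Hilbert basis theorem for Ore extensions.

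For (c), I will combine two well-known facts. First, $\Lambda_{\bf q, l}$ is finite-dimensional selfinjective (as recorded in \cite{AGP}), so every finitely generated $\Lambda_{\bf q, l}$-module is Gorenstein projective; thus $\Lambda_{\bf q, l}\mbox{-Gproj} = \Lambda_{\bf q, l}\mbox{-mod}$ with the same notion of projective object. Second, $A_{\bf q}$ has finite global dimension (equal to $n$, since it is Artin--Schelter regular, or equivalently by induction on the iterated Ore structure), so every finitely generated $A_{\bf q}$-module has finite projective dimension; hence the underlying-module condition $<+\infty$ is automatic. Putting (a), (b), (c) together, the second triangle equivalence in Theorem~\ref{thm:general} specializes directly to the asserted equivalence.

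I do not anticipate a substantive obstacle. The only step that is not pure bookkeeping is the verification in (a), and this is a direct manipulation with the quantum relations; the rest reduces to standard facts about iterated Ore extensions and finite-dimensional selfinjective algebras together with the already-established Theorem~\ref{thm:general}.
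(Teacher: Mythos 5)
Your proposal is correct and follows essentially the same route as the paper: reduce to the second equivalence of Theorem~\ref{thm:general} by noting that $\Lambda_{\bf q,l}$ is finite-dimensional selfinjective (so $\Lambda_{\bf q,l}\mbox{-Gproj}=\Lambda_{\bf q,l}\mbox{-mod}$) and that $A_{\bf q}$ has finite global dimension (so the finiteness condition on projective dimension over $A_{\bf q}$ is automatic). The only difference is that you spell out the verification that $(x_1^{l_1},\ldots,x_n^{l_n})$ is a regular sequence of type $(\sigma_1,\ldots,\sigma_n;\xi_{ij})$ and that $A_{\bf q}$ is noetherian, whereas the paper records these as observations in the preamble to the theorem without proof.
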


\begin{proof}
Since $\Lambda_{\bf q, l}$ is selfinjective, any module is Gorenstein projective. Since $A_{\bf q}$ has finite global dimension, we have $\Lambda_{\bf q, l} \mbox{-mod}=\Lambda_{\bf q, l} \mbox{-Gproj}^{<+\infty}$. The required equivalence follows from the second one in Theorem~\ref{thm:general}.
\end{proof}

\vskip 15pt

\noindent {\bf Acknowledgements.}  \; We thank Xiaofa Chen and Zhengfang Wang for many helpful comments.  The project is supported by National Key R$\&$D Program of China (No. 2024YFA1013801) and  National Natural Science Foundation of China (No.s 12325101 and  12131015).

\vskip 10pt

 {\footnotesize \noindent Xiao-Wu Chen\\
 School of Mathematical Sciences, University of Science and Technology of China\\
 Hefei 230026, Anhui, PR China\\
 xwchen$\symbol{64}$mail.ustc.edu.cn}

\end{document}